\newtheorem{thm}{Theorem}
\newtheorem{lem}{Lemma}[section]
\newtheorem{prop}[lem]{Proposition}
\newtheorem{coro}{Corollary}
\newtheorem{remar}{Remark}
\newcommand{\R}{\mathbb{R}}
\newcommand{\N}{\mathbb{N}}
\newcommand{\C}{\mathbb{C}}
\newcommand{\A}{\mathcal{A}}
\newcommand{\Q}{\mathcal{Q}}
\newcommand{\U}{\mathcal{U}}
\newcommand{\AC}{\mathcal{A}_{\mathbb{C}}}
\newcommand{\HAC}{\mathbb{H}^{\mathcal{A}_{\mathbb{C}}}}
\newcommand{\e}{\textit{1}}
\newcommand{\la}{\langle}
\newcommand{\ra}{\rangle}
\newcommand{\laa}{\langle\langle}
\newcommand{\raa}{\rangle\rangle}
\title{\bf Spinor representation of Lorentzian surfaces in $\R^{2,2}$}
\author{Pierre Bayard\footnote{bayard@ciencias.unam.mx, Facultad de Ciencias, Universidad Nacional Aut\'onoma de M\'exico, M\'exico}, Victor Patty\footnote{victorp@ifm.umich.mx, Instituto de F\'{\i}sica y Matem\'aticas, U.M.S.N.H., Ciudad Universitaria, CP. 58040 Morelia, Michoac\'an, M\'exico}}
\date{}
\begin{document}
\maketitle

\begin{abstract}
We prove that an isometric immersion of a simply connected Lorentzian surface in $\R^{2,2}$ is equivalent to a normalised spinor field solution of a Dirac equation on the surface. Using the quaternions and the Lorentz numbers, we also obtain an explicit representation formula of the immersion in terms of the spinor field. We then apply the representation formula in $\R^{2,2}$ to give a new spinor representation formula for Lorentzian surfaces in 3-dimensional Minkowski space. Finally, we apply the representation formula to the local description of the flat Lorentzian surfaces with flat normal bundle and regular Gauss map in $\R^{2,2},$ and show that these surfaces locally depend on four real functions of one real variable, or on one holomorphic function together with two real functions of one real variable, depending on the sign of a natural invariant.
\end{abstract}
{\it Keywords:} Lorentzian surfaces, Dirac Operator, Isometric Immersions, Weierstrass Representation.\\\\
\noindent
{\it 2010 Mathematics Subject Classification:} 53B25, 53C27, 53C42, 53C50.
\maketitle\pagenumbering{arabic}

\section*{Introduction}
Let $\R^{2,2}$ be the space $\R^4$ endowed with the metric of signature $(2,2)$ 
$$g=-dx_0^2+dx_1^2-dx_2^2+dx_3^2.$$
A surface $M\subset\R^{2,2}$ is said to be Lorentzian if the metric $g$ induces on $M$ a Lorentzian metric, i.e. a metric  of signature $(1,1):$ the tangent and the normal bundles of $M$ are then equipped with fibre Lorentzian metrics. The purpose of the paper is to study the spinor representation of the Lorentzian surfaces in $\R^{2,2};$ the main result is the following: if $M$ is an abstract Lorentzian surface, $E$ is a bundle of rank 2 on $M,$ with a Lorentzian fibre metric and a compatible connection, and $\vec{H}\in\Gamma(E)$ is a section of $E,$ then an isometric immersion of $M$ into $\R^{2,2},$ with normal bundle $E$ and mean curvature vector $\vec{H},$ is equivalent to a normalised section $\varphi\in\Gamma(\Sigma),$ solution of a Dirac equation $D\varphi=\vec{H}\cdot \varphi$ on the surface, where $\Sigma=\Sigma E\otimes \Sigma M$ is the spinor bundle of $M$ twisted by the spinor bundle of $E$ and $D$ is a natural Dirac operator acting on $\Sigma$ (we assume that spin structures are given on $TM$ and $E$). We moreover define a natural closed 1-form $\xi$ in terms of $\varphi,$ with values in $\R^{2,2},$ such that $F:=\int\xi$ is the immersion. As a first application of this representation, we derive an easy proof of the fundamental theorem of the theory of Lorentzian surfaces immersed in $\R^{2,2}:$ a symmetric bilinear map $B:TM\times TM\rightarrow E$ is the second fundamental form of an immersion of $M$ into $\R^{2,2}$ if and only if it satisfies the equations of Gauss, Codazzi and Ricci. We then deduce from the general representation in $\R^{2,2}$ spinor representations for Lorentzian surfaces in 3-dimensional Minkowski spaces $\R^{1,2}$ and $\R^{2,1},$ and also obtain new explicit representation formulas; the representations appear to be simpler than the representations obtained before by M.-A. Lawn \cite{lawn_thesis,lawn} and by M.-A. Lawn and J. Roth \cite{lawn_roth}, since only one spinor field is involved in the formulas. Our last application concerns the flat Lorentzian surfaces with flat normal bundle and regular Gauss map in $\R^{2,2}$: the general spinor representation formula permits to study their local structure; they locally depend on four real functions of one real variable if a natural invariant $\Delta$ is positive, and on one holomorphic function together with two real functions of one real variable if $\Delta$ is negative.

We note that a spinor representation for surfaces in 4-dimensional pseudo-Riemannian spaces already appeared in \cite{Va}; the representation formula obtained in that paper seems to be different, since the normal bundle and the Clifford action are not explicitly involved in the formula.

We quote the following related papers: the spinor representation of surfaces in $\R^3$ was studied by many authors, especially by Th. Friedrich in \cite{friedrich}, who interpreted a spinor field representing a surface in $\R^3$ as a constant spinor field of $\R^3$ restricted to the surface;  following this approach, the spinor representation of Lorentzian surfaces in 3-dimensional Minkowski space was studied by M.-A. Lawn \cite{lawn_thesis,lawn} and M.-A. Lawn and J. Roth \cite{lawn_roth}. M.-A. Lawn, J. Roth and the first author then studied the spinor representation of surfaces in 4-dimensional Riemannian space forms in \cite{bayard_lawn_roth}, and the first author the spinor representation of spacelike surfaces in 4-dimensional Minkowski space in \cite{bayard1}. Recently, P. Romon and J. Roth studied in \cite{RR} the relation between this abstract approach and more explicit representation formulas existing in the literature for surfaces in $\R^3$ and $\R^4$. Finally, the local description of the flat surfaces with flat normal bundle and regular Gauss map in 4-dimensional Euclidean and Minkowski spaces was studied in \cite{dajczer tojeiro}.

The outline of the paper is as follows: the first section is devoted to preliminaries concerning the Clifford algebra of $\R^{2,2},$ the spin representation, and the spin geometry of Lorentzian surfaces in $\R^{2,2}.$ We use quaternions and Lorentz numbers to obtain concise formulas. Section \ref{section spin representation} is devoted to the spinor representation formula of Lorentzian surfaces in $\R^{2,2}.$ We indicate at the end of the section how to obtain the representation formulas for surfaces in $\R^{1,2}$ and $\R^{2,1}.$ We then apply the representation formula to the local description of the flat Lorentzian surfaces with flat normal bundle and regular Gauss map in Section \ref{section flat surfaces}. An appendix ends the paper.
\section{Preliminaries}\label{prelim}
\subsection{Clifford algebra of $\R^{2,2}$ and the spin representation}\label{prelim subsection1}
Let us denote by $(e_0,e_1,e_2,e_3)$ the canonical basis of $\R^{2,2}.$ The norm of a vector $x=(x_0,x_1,x_2,x_3)$ belonging to $\R^{2,2}$ is
$$\langle x,x\rangle:=-x_0^2+x_1^2-x_2^2+x_3^2.$$ 
To describe the Clifford algebra of $\R^{2,2},$ it will be convenient to consider the Lorentz numbers 
$$\A=\{u+\sigma v:\  u,v\in\R\},$$
where $\sigma$ is a formal element such that $\sigma^2=1,$ the complexified Lorentz numbers
$$\AC:=\mathcal{A}\otimes\mathbb{C}\simeq\{u+\sigma v:\  u,v\in\C\},$$ 
and the quaternions  with coefficients in $\AC$ 
$$\HAC:=\{ \zeta_0\e+\zeta_1I+\zeta_2J+\zeta_3K:\ \zeta_0,\zeta_1,\zeta_2,\zeta_3 \in \AC\},$$ 
where $I,J$ and $K$ are such that 
$$I^2=J^2=K^2=-\e,\hspace{.5cm}IJ=-JI=K.$$ 
If $a=u+\sigma v$ belongs to $\AC,$ we denote $\widehat{a}:=u-\sigma v,$ and set, for all $\zeta=\zeta_0\e+\zeta_1I+\zeta_2J+\zeta_3K$ belonging to $\HAC,$ 
$$\widehat{\zeta}:=\widehat{\zeta_0}\e+\widehat{\zeta_1}I+\widehat{\zeta_2}J+\widehat{\zeta_3}K.$$ 
If $\HAC(2)$ stands for the set of $2\times2$ matrices with entries belonging to $\HAC,$ the map
\begin{eqnarray}
\gamma:\hspace{.5cm}\R^{2,2} & \longrightarrow & \HAC(2)\label{aplicliff} \\
(x_0,x_1,x_2,x_3) & \longmapsto & 
\begin{pmatrix} 0 & \sigma i x_0\e+x_1I+ix_2J+x_3K\\ -\sigma i x_0\e+x_1I+ix_2J+x_3K & 0 \end{pmatrix}\nonumber
\end{eqnarray}
is a Clifford map, that is satisfies
$$\gamma(x)^2=-\langle x,x\rangle\left(\begin{array}{cc}\e&0\\0&\e\end{array}\right)$$ 
for all $x\in\R^{2,2},$ and thus identifies
\begin{equation}\label{description Cl22} 
Cl(2,2)\simeq\left\lbrace\begin{pmatrix} p & q\\ \widehat{q} & \widehat{p} \end{pmatrix}:\ p\in \mathbb{H}_0,\ q\in \mathbb{H}_1 \right\rbrace, 
\end{equation}
where 
$$\mathbb{H}_0:=\left\lbrace p_0\e+ip_1I+p_2J+ip_3K:\ p_0,p_1,p_2,p_3\in\A\right\rbrace$$
and
$$\mathbb{H}_1:=\left\lbrace iq_0\e+q_1I+iq_2J+q_3K:\ q_0,q_1,q_2,q_3\in\A\right\rbrace.$$
Note that $\mathbb{H}_0$ naturally identifies to the para-quaternions numbers described in \cite{lawn_thesis}, but here with coefficients in the Lorentz numbers $\A.$ Using (\ref{description Cl22}), the sub-algebra of elements of even degree is   
\begin{align}\label{elempar}
Cl_0(2,2)& \simeq\left\lbrace \begin{pmatrix}
p & 0\\
0 & \widehat{p}
\end{pmatrix}:\ p\in\mathbb{H}_0\right\rbrace\simeq\mathbb{H}_0
\end{align}
and the set of elements of odd degree is
\begin{align}\label{elem_impar}
Cl_1(2,2)& \simeq\left\lbrace \begin{pmatrix}
0 & q\\
\widehat{q} & 0
\end{pmatrix}:\ q\in\mathbb{H}_1\right\rbrace\simeq\mathbb{H}_1.
\end{align}
If $\zeta=\zeta_0\e+\zeta_1I+\zeta_2J+\zeta_3K$ belongs to $\HAC,$ we define its conjugate by 
$$\overline{\zeta}:=\zeta_0\e-\zeta_1I-\zeta_2J-\zeta_3K.$$ 
Let us consider the map 
\begin{eqnarray*}
H:\hspace{1cm}\HAC\times\HAC &\longrightarrow& \AC \\
(\zeta,\zeta') &\longmapsto& \frac{1}{2}\left(\zeta\overline{\zeta'}+\zeta'\overline{\zeta}\right)
=\zeta_0\zeta_0'+\zeta_1\zeta_1'+\zeta_2\zeta_2'+\zeta_3\zeta_3'
\end{eqnarray*}
where $\zeta=\zeta_0\e+\zeta_1I+\zeta_2J+\zeta_3K$ and $\zeta'=\zeta_0'\e+\zeta_1'I+\zeta_2'J+\zeta_3'K.$ It is obviously $\AC$-bilinear and symmetric. If we consider the restriction of this map to $\mathbb{H}_0,$ 
\begin{equation}\label{aplic_H_restricta} 
H(p,p')=p_0p_0'-p_1p_1'+p_2p_2'-p_3p_3'\hspace{.5cm}\in\ \A
\end{equation}
where $p=p_0\e+ip_1I+p_2J+ip_3K$ and $p'=p_0'\e+ip_1'I+p_2'J+ip_3'K$ belong to $\mathbb{H}_0,$ the spin group is given by
\begin{equation*}
Spin(2,2):=\left\lbrace p\in \mathbb{H}_0:\ H(p,p)=1 \right\rbrace\subset Cl_0(2,2).
\end{equation*}
Now, if we consider the identification 
\begin{align}\label{iden_espa}
\R^{2,2} & \simeq \{\sigma i x_0\e+x_1I+ix_2J+x_3K:\ x_0,x_1,x_2,x_3\in \R\} \notag \\
& \simeq \{q \in \mathbb{H}_1:\ q=-\widehat{\overline{q}}\}, 
\end{align}
we get the double cover  
\begin{eqnarray}\label{cubriente}
\Phi: & Spin(2,2) & \longrightarrow  SO(2,2)\\
& p & \longmapsto  (q\in\R^{2,2} \longmapsto p q\widehat{p}^{-1}\in\R^{2,2}). \notag
\end{eqnarray} 
Here and below $SO(2,2)$ stands for the component of the identity of the orthogonal group $O(2,2)$ (elementary properties of this group may be found in \cite{oneill}). 

If we consider $\mathbb{H}_0$ as a complex vector space, with the complex structure given by the multiplication by $J$ on the right, the complex irreducible representation of $Cl(2,2)$ can be conveniently represented as follows: 
$$\rho: Cl(2,2) \longrightarrow End(\mathbb{H}_0)$$ 
where
\begin{equation}\label{rep_alg} \rho\begin{pmatrix}p & q\\ \widehat{q} & \widehat{p}\end{pmatrix}:\hspace{0.2in} \xi\in\mathbb{H}_0\hspace{.2cm}\simeq \hspace{.2cm}\begin{pmatrix}\xi \\ \sigma i\widehat{\xi}\end{pmatrix}\hspace{.3cm}\longmapsto\hspace{.3cm} \begin{pmatrix}p & q\\ \widehat{q} & \widehat{p}\end{pmatrix}\begin{pmatrix}\xi \\ \sigma i\widehat{\xi}\end{pmatrix}\hspace{.2cm}\simeq\hspace{.2cm} p\xi+\sigma iq\widehat{\xi}\in\mathbb{H}_0,
\end{equation} 
so that the spinorial representation of $Spin(2,2)$ simply reads 
\begin{eqnarray}\label{rep_spin}
\rho_{|Spin(2,2)}: Spin(2,2) & \longrightarrow & End_{\C}(\mathbb{H}_0) \\
p & \longmapsto & (\xi\in \mathbb{H}_0 \longmapsto p\xi\in\mathbb{H}_0). \notag
\end{eqnarray}
Since $\rho(\sigma \e)^2=id_{\mathbb{H}_0},$ this representation splits into  
\begin{equation}\label{splitting H0}
\mathbb{H}_0=\Sigma^+\oplus \Sigma^-,
\end{equation} 
where $\Sigma^+:=\{\xi\in \mathbb{H}_0:\ \sigma\xi=\xi\}$ and $\Sigma^-:=\{\xi\in \mathbb{H}_0:\ \sigma\xi=-\xi\}.$ Explicitly, we have 
\begin{equation}\label{exp_expli_pos}
\Sigma^+ =(1+\sigma)\left\{(\R\oplus\R J)+iI(\R\oplus\R J)\right\}
\end{equation}
and
\begin{equation}\label{exp_expli_neg}
\Sigma^- =(1-\sigma)\left\{(\R\oplus\R J)+iI(\R\oplus\R J)\right\}.
\end{equation}
Note that $\sigma\e\in\mathbb{H}_0$ represents the volume element $e_0\cdot e_1\cdot e_2\cdot e_3,$ which thus acts as $+id$ on $\Sigma^+$ and as $-id$ on $\Sigma^-.$

\subsection{Spinors under the splitting $\R^{2,2}=\R^{1,1}\times \R^{1,1}$}
We consider the splitting $\R^{2,2}=\R^{1,1}\times\R^{1,1},$ such that first factor corresponds to the coordinates $(x_0,x_1)$ and the second factor to the coordinates $(x_2,x_3);$ the metrics on the factors are thus $-dx_0^2+dx_1^2$ and $-dx_2^2+dx_3^2$ respectively. We also consider the corresponding natural inclusion $SO(1,1)\times SO(1,1)\subset SO(2,2)$. We are first interested in the description of the set 
\[S_{\A}^1:=\Phi^{-1}(SO(1,1)\times SO(1,1))\subset Spin(2,2)\]
where $\Phi$ is the double cover (\ref{cubriente}). To this end, it is convenient to first introduce some $\A$-valued maps, already considered in \cite{konderak}. Let $a\in\A;$ writing 
\[a=\frac{1+\sigma}{2}(u+v)+\frac{1-\sigma}{2}(u-v),\] 
$u,v\in\R,$ and using the properties
\begin{equation}\label{prop_coef}
\left(\frac{1+\sigma}{2}\right)^2=\frac{1+\sigma}{2},\hspace{0.1in}\left(\frac{1-\sigma}{2}\right)^2=\frac{1-\sigma}{2},\hspace{0.1in}\left(\frac{1+\sigma}{2}\right)\left(\frac{1-\sigma}{2}\right)=0,
\end{equation}
we have
 \[a^n=\frac{1+\sigma}{2}(u+v)^n+\frac{1-\sigma}{2}(u-v)^n\]
 for all $n\in\N.$ Thus we can define the exponential map $\A\to\A$ by 
 \begin{equation}\label{exp_num_lorentz}
e^a:=\sum_{n=0}^{\infty}\frac{a^n}{n!}=\frac{1+\sigma}{2}e^{u+v}+\frac{1-\sigma}{2}e^{u-v}
\end{equation} 
for all $a=u+\sigma v\in\A,$ where in the right-hand side $e^{(\cdot)}$ is the usual exponential map, and also define the $\A$-valued hyperbolic sin and cosin functions by the usual formulas 
 \[\cosh(a):=\frac{e^a+e^{-a}}{2}\hspace{.5cm}\mbox{and}\hspace{.5cm}\sinh(a):=\frac{e^a-e^{-a}}{2}. \] 
It is easy to check the following identities 
\begin{equation}\label{cos_sen_num_lorn}\begin{split}
\cosh(a) & =\cosh(u)\cosh(v)+\sigma\sinh(u)\sinh(v),\\
\sinh(a) & =\sinh(u)\cosh(v)+\sigma\cosh(u)\sinh(v)\end{split}
\end{equation}
for all $a=u+\sigma v\in\A.$ Using the definition (\ref{cubriente}) of $\Phi,$ it is easy to get 
\begin{equation}
S_{\A}^1=\{\pm(\cosh(a)+i\sinh(a)I):\ a\in\A \} \subset Spin(2,2);
\end{equation} 
more precisely, writing $a=u+\sigma v \in \A$ and using the identities (\ref{cos_sen_num_lorn}), we get  
\begin{equation*}
\cosh(a)+i\sinh(a)I=(\cosh(v)+\sigma i\sinh(v)I).(\cosh(u)+i\sinh(u)I),
\end{equation*}
and $\Phi(\pm(\cosh(a)+i\sinh(a)I))$ appears to be the transformation of $\R^{2,2}$ which consists of a Lorentz rotation of angle $-2v$ in the first factor $\R^{1,1}$ and of angle $-2u$ in the second factor $\R^{1,1}.$ Thus, setting
\begin{equation}\label{spinp_11}
Spin'(1,1) :=\{\pm(\cosh(v)+\sigma i\sinh(v)I):\ v\in \R\},
\end{equation}
and
\begin{equation}\label{spin_11}
Spin''(1,1)  :=\{\pm(\cosh(u)+i\sinh(u)I):\ u\in \R\}, 
\end{equation}
we have 
\begin{equation}
S_{\A}^1=Spin'(1,1).Spin''(1,1)\simeq Spin'(1,1)\times_{\mathbb{Z}_2}Spin''(1,1)
\end{equation} 
and the double cover 
\begin{equation}
\Phi:S_{\A}^1 \longrightarrow SO(1,1)\times SO(1,1).
\end{equation}
Now, if we consider the spinorial representation $\rho$ of $Spin(2,2)$ restricted to $S_{\A}^1\subset Spin(2,2),$ $\mathbb{H}_0=\Sigma^+\oplus \Sigma^-$ splits into the sum of four complex lines 
\begin{equation}\label{desc_1_rep}
\Sigma^{+}=\Sigma^{++}\oplus\Sigma^{--},\hspace{0.2in}\Sigma^{-}=\Sigma^{+-}\oplus\Sigma^{-+},
\end{equation} where 
$$\Sigma^{++}=(1+\sigma)(\e+iI)(\R\oplus\R J),\hspace{.5cm} \Sigma^{--}=(1+\sigma)(\e-iI)(\R\oplus\R J),$$
$$\Sigma^{+-}=(1-\sigma)(\e-iI)(\R\oplus\R J)\hspace{.5cm}\mbox{and}\hspace{.5cm} \Sigma^{-+}=(1-\sigma)(\e+iI)(\R\oplus\R J)$$
(recall that the complex structure such that the representation is $\C-$linear is given by the right-multiplication by $J$). Note that $e_0\cdot e_1$ acts as $+id$ on $\Sigma^{++}$ and on $\Sigma^{+-},$ and as $-id$ on $\Sigma^{--}$ and on $\Sigma^{-+},$ whereas $e_2\cdot e_3$ acts as $+id$ on $\Sigma^{++}$ and on $\Sigma^{-+},$ and as $-id$ on $\Sigma^{--}$ and on $\Sigma^{+-}.$

Moreover, it is not difficult to show that the representations of $S_{\A}^1$ on $\Sigma^{++},\Sigma^{--},\Sigma^{+-}$ and $\Sigma^{-+}$ are respectively equivalent to the multiplication by $\pm e^{v+u},$ $\pm e^{-v-u},$ $\pm e^{v-u}$ and $\pm e^{-v+u}$ on $\C.$  

\begin{remar}\label{equiv_repres}
Let $\rho_1=\rho_1^+\oplus\rho_1^-$ and $\rho_2=\rho_2^+\oplus\rho_2^-$ be the spinorial representations of $Spin'(1,1)$ and $Spin''(1,1)$ respectively. The representation 
\begin{equation}\label{rep_prod_tensor}
\rho_1\otimes\rho_2\hspace{.5cm}=\hspace{.5cm}\rho_1^+\otimes\rho_2^+\hspace{.3cm}\oplus\hspace{.3cm}\rho_1^-\otimes\rho_2^- \hspace{.3cm}\oplus\hspace{.3cm}\rho_1^+\otimes\rho_2^-\hspace{.3cm}\oplus\hspace{.3cm}\rho_1^-\otimes\rho_2^+
\end{equation}
of $Spin'(1,1)\times Spin''(1,1)$ is also the sum of the natural representations $\pm e^{v+u},$ $\pm e^{-v-u},$ $\pm e^{v-u},$ $\pm e^{-v+u}$ on $\C,$ where $v\in\R$ describes the $Spin'(1,1)$-factor and $u\in\R$ the $Spin''(1,1)$-factor of $Spin'(1,1)\times Spin''(1,1)$ as in (\ref{spinp_11})-(\ref{spin_11}). Thus, the representation 
\begin{eqnarray}\label{repres_grupo_estructura}
Spin'(1,1)\times Spin''(1,1) & \longrightarrow & End_{\C}(\mathbb{H}_0)\\
(g_1,g_2) & \longmapsto & \rho(g):\xi \longmapsto g\xi,\notag
\end{eqnarray}
where $g=g_1g_2\in S_{\A}^1=Spin'(1,1).Spin''(1,1),$ is equivalent to the representation $\rho_1\otimes\rho_2,$ and the decomposition (\ref{desc_1_rep}) of $\Sigma^+$ and $\Sigma^-$ corresponds to (\ref{rep_prod_tensor}).
\end{remar}

\subsection{Spin geometry of a Lorentzian surface in $\R^{2,2}$} 
\subsubsection{Fundamental equations} Let $M$ be a Lorentzian surface in $\R^{2,2}.$ Let us denote  by $E$ its normal bundle and by $B:TM\times TM\to E$ its second fundamental form defined by \[B(X,Y)=\overline{\nabla}_XY-\nabla_XY\] for all $X,Y\in TM,$ where $\nabla$ and $\overline{\nabla}$ are the Levi-Civita connections of $M$ and $\R^{2,2}$ respectively. We moreover assume that $TM$ and $E$ are oriented, both in space and in time: we assume that the bundles $TM$ and $E$ are oriented, and that, for all $p\in M,$ a component of $\{X\in T_pM:\ g(X,X)<0\}$ and a component of $\{X\in E_p:\ g(X,X)<0\}$ are distinguished, in a continuous manner; a vector tangent or normal to $M$ belonging to one of these distinguished components will be called \emph{future-directed}. We will moreover adopt the following convention: a basis $(u,v)$ of $T_pM$ or $E_p$ will be said positively oriented (in space and in time) if it has the orientation of $T_pM$ or $E_p,$  and if $g(u,u)<0$ and $g(v,v)>0$ with $u$ future-directed. Let us denote by $K$ and $K_N$ the curvatures of $M$ and $E$ ($E$ is equipped with the normal connection), and by $\tilde{\nabla}$ the natural connection induced on $T^*M^{\otimes 2}\otimes E.$ If $(e_2,e_3)$ and $(e_0,e_1)$ are orthonormal, positively oriented bases of $TM$ and $E$ respectively, the second fundamental form satisfies the following equations (see e.g. \cite{oneill}):
\begin{enumerate}
\item $K= |B(e_2,e_3)|^2-\la B(e_2,e_2),B(e_3,e_3) \ra$ (Gauss equation),
\item $K_N=\la (S_{e_0}\circ S_{e_1}-S_{e_1}\circ S_{e_0})(e_2),e_3\ra$ (Ricci equation),
\item $(\tilde{\nabla}_XB)(Y,Z)-(\tilde{\nabla}_YB)(X,Z)=0$ (Codazzi equation).
\end{enumerate}
As usual, if $\nu\in E,$ $S_{\nu}$ stands for the symmetric operator on $TM$ such that \[\la S_{\nu}(X),Y\ra=\la B(X,Y),\nu\ra\] for all $X,Y\in TM.$

\begin{remar}\label{teo_fund_inmersion}
Assume that $(M,g)$ is a surface equipped with a Lorentzian metric, and $E$ is a bundle on $M,$ of rank 2, with a fibre Lorentzian metric and a compatible connection. Suppose moreover that $M$ and $E$ are oriented, in space and in time. Then, if $B:TM\times TM\to E$ is a bilinear and symmetric map satisfying the equations $(1),(2)$ and $(3)$ above, the fundamental theorem says that, locally, there is an isometric immersion of $M$ into $\R^{2,2}$ with normal bundle $E$ and second fundamental form $B.$ The immersion is moreover unique up to the rigid motions of $\R^{2,2}.$ We will obtain a spinorial proof of this theorem below (Corollary \ref{two steps integration}).
\end{remar}

\subsubsection{Spinorial Gauss formula}\label{gauss_formula}
We assume here that the tangent and the normal bundles of $M\subset\R^{2,2}$ are oriented (in space and in time), with given spin structures. There is a natural identification between the spinor bundle of $\R^{2,2}$ restricted to $M,$ $\Sigma\R^{2,2}_{|M},$ and the spinor bundle of $M$ twisted by the spinor bundle of $E,$ $\Sigma:=\Sigma E\otimes\Sigma M;$ see \cite{bar} and also Remark \ref{equiv_repres}. Moreover, exactly as in the Riemannian case, we have a spinorial Gauss formula (see \cite{bar,hijazi,Va}): if $\overline{\nabla}$ is the spinorial connection of $\Sigma\R^{2,2}$ and $\nabla$ is the spinorial connection of $\Sigma$ defined by 
\begin{equation*}
\nabla:=\nabla^{\Sigma E}\otimes id_{\Sigma M}+id_{\Sigma E}\otimes\nabla^{\Sigma M}
\end{equation*}
where $\nabla^{\Sigma E}$ and $\nabla^{\Sigma M}$ denote the spinorial connections on $\Sigma E$ and $\Sigma M,$ then, for any $\varphi\in\Sigma$ and any $X\in TM,$ 
\begin{equation}\label{formula_gauss_espinorial}
\overline{\nabla}_X\varphi=\nabla_X\varphi+\frac{1}{2}\sum_{j=2}^3\epsilon_je_j\cdot B(X,e_j)\cdot\varphi
\end{equation}
where $\epsilon_j=\la e_j,e_j\ra,$ and the dot $"\cdot"$ is the Clifford action of $\R^{2,2}.$ Thus, if $\varphi\in\Sigma\R^{2,2}$ is parallel, i.e. is such that $\overline{\nabla}\varphi=0,$  then its restriction to $M$ satisfies
\begin{equation}\label{for_gauss_paralelo}
\nabla_X\varphi=-\frac{1}{2}\sum_{j=2}^3\epsilon_je_j\cdot B(X,e_j)\cdot\varphi.
\end{equation}
Taking the trace, we get the following Dirac equation
\begin{equation}\label{ecua_dirac_inmersion}
D\varphi=\vec{H}\cdot \varphi,
\end{equation}
where $D\varphi:=-e_2\cdot\nabla_{e_2}\varphi+e_3\cdot\nabla_{e_3}\varphi$ and where $\vec{H}=\frac{1}{2}tr_gB\ \in E$ is the mean curvature vector of $M$ in $\R^{2,2}.$ 

\subsection{The inverse construction}
Let $(M,g)$ be a Lorentzian surface and $E$ a bundle of rank 2 on $M,$ equipped with a fibre Lorentzian metric and a compatible connection; we assume that $M$ and $E$ are oriented (in space and in time), with given spin structures. If $(e_0,e_1)$ and $(e_2,e_3)$ are positively oriented and orthonormal frames of $E$ and $TM,$ then, in the respective Clifford bundles, $(e_0\cdot e_1)^2=1$ and $(e_2\cdot e_3)^2=1;$ the spinor  bundles $\Sigma E$ and  $\Sigma M$ thus split into
$$\Sigma E=\Sigma^+E\oplus\Sigma^-E\hspace{.5cm}\mbox{and}\hspace{.5cm} \Sigma M=\Sigma^+M\oplus\Sigma^-M$$
where $e_0\cdot e_1$ acts as $+id$ on $\Sigma^+E$ and as $-id$ on $\Sigma^-E,$ whereas $e_2\cdot e_3$ acts as $+id$ on $\Sigma^+M$ and as $-id$ on $\Sigma^-M.$ We consider the spinor bundle $\Sigma M$ twisted by the spinor bundle $\Sigma E$ and defined by 
\begin{equation*}
\Sigma:=\Sigma E\otimes\Sigma M. 
\end{equation*}
We endow $\Sigma$ with the spinorial connection
\begin{equation*}\nabla:=\nabla^{\Sigma E}\otimes id_{\Sigma M}+id_{\Sigma E}\otimes\nabla^{\Sigma M}.\end{equation*}
We also define the Clifford product $"\cdot"$ by \begin{equation*}
\begin{cases}
X\cdot\varphi=(X\cdot_E\alpha)\otimes\beta \hspace{0.1in}\hspace{.3cm}\text{if}\hspace{0.1in}X\in\Gamma(E),\\
X\cdot\varphi=\overline{\alpha}\otimes(X\cdot_M\beta)\hspace{.3cm}\text{if}\hspace{0.1in}X\in\Gamma(TM),
\end{cases}
\end{equation*}
where $\varphi=\alpha\otimes\beta$ belongs to $\Sigma,$ $\cdot_E$ and $\cdot_M$ denote the Clifford actions on $\Sigma E$ and $\Sigma M$ respectively and where $\overline{\alpha}=\alpha^+-\alpha^-\ \in\ \Sigma E=\Sigma^+E\oplus\Sigma^-E$. Finally we define the Dirac operator
\begin{equation}\label{def dirac}
D\varphi:=-e_2\cdot\nabla_{e_2}\varphi+e_3\cdot\nabla_{e_3}\varphi
\end{equation} 
where $(e_2,e_3)$ is an orthogonal basis tangent to $M$ such that $|e_2|^2=-1$ and $|e_3|^2=1.$  
\\
\\If we denote by $Q_E$ and $Q_M$ the $SO(1,1)$ principal bundles of the oriented and orthonormal frames of $E$ and $TM,$ by $\tilde{Q}_E\to Q_E$ and $\tilde{Q}_M\to Q_M$ the given spin structures on $E$ and $TM,$ and by $p_E:\tilde{Q}_E\to M$ and $p_M:\tilde{Q}_M\to M$ the natural projections, we define the principal bundle over $M$ 
\begin{equation*}\tilde{Q}:=\tilde{Q}_E\times_M\tilde{Q}_M=\{(\tilde{s_1},\tilde{s_2})\in\tilde{Q}_E\times\tilde{Q}_M:p_E(\tilde{s_1})=p_M(\tilde{s_2}) \}.\end{equation*}

\begin{remar}
$\Sigma$ is the vector bundle associated to the principal bundle $\tilde{Q}$ and to the spinor representation $\rho_1\otimes\rho_2\simeq\rho$ of the structure group $Spin'(1,1)\times Spin''(1,1);$ see Remark \ref{equiv_repres}.
\end{remar}
Since the group $S^1_{\A}=Spin'(1,1).Spin''(1,1)$ belongs to $Spin(2,2),$ which preserves the $\A$-bilinear map $H$ defined on $\mathbb{H}_0$ by (\ref{aplic_H_restricta}), the spinor bundle $\Sigma$ is also equipped with a $\A$-bilinear map $H$ and with a real scalar product $\la\cdot,\cdot\ra:=\Re e\ H(\cdot,\cdot)$ of signature $(4,4)$ (here $\Re e$ means that we consider the coefficient of $1$ in the decomposition $\A\simeq\R 1\oplus\R\sigma$). We may also define a $\mathbb{H}_1$-valued scalar product on $\Sigma$ by
\begin{equation}\label{prod_escal_vector}
\laa\psi,\psi'\raa:=\sigma i\ \overline{\xi'}\xi,
\end{equation}
where $\xi$ and $\xi'\in\mathbb{H}_0$ are respectively the components of $\psi$ and $\psi'$ in some local section of  $\tilde{Q}.$ This scalar product is $\A$-bilinear, and satisfies the following properties: for all $\psi,\psi'\in\Sigma$ and for all $X\in E\oplus TM$ 
\begin{equation}\label{prop_prod_escal_vector} 
\laa\psi,\psi'\raa=\overline{\laa\psi',\psi\raa}\hspace{0.2in}\text{and}\hspace{0.2in}\laa X\cdot\psi,\psi'\raa=-\widehat{\laa\psi,X\cdot\psi'\raa}.
\end{equation} 
Note that, by definition, $H(\psi,\psi')$ is the coefficient of $\sigma i\e$ in the decomposition of $\laa\psi,\psi'\raa$ in the basis $\sigma i\e,I,iJ,K$ of $\mathbb{H}_1$ (basis as a module over $\A$), and that (\ref{prop_prod_escal_vector}) yields
\begin{equation}\label{aplic_H_propiedad}
H(\psi,\psi')=H(\psi',\psi) \hspace{0.2in}\text{and}\hspace{0.2in} H(X\cdot\psi,\psi')=\widehat{H(\psi,X\cdot\psi')},
\end{equation}
for all $\psi,\psi'\in\Sigma$ and for all $X\in E\oplus TM.$ In particular, the real scalar product satisfies
 \begin{equation}\label{properties scalar product}
\langle\psi,\psi'\rangle=\langle\psi',\psi\rangle \hspace{0.2in}\text{and}\hspace{0.2in} \langle X\cdot\psi,\psi'\rangle=\langle\psi,X\cdot\psi'\rangle
\end{equation}
for all $\psi,\psi'\in\Sigma$ and for all $X\in E\oplus TM.$
\\

Finally, setting
\begin{equation*}
\Sigma^{++}:=\Sigma^+E\otimes\Sigma^+M,\hspace{0.2in}\Sigma^{--}:=\Sigma^-E\otimes\Sigma^-M,\hspace{0.1in}\Sigma^{+-}:=\Sigma^+E\otimes\Sigma^-M,\hspace{0.2in} \Sigma^{-+}:=\Sigma^-E\otimes\Sigma^+M
\end{equation*}  
and
\begin{equation*}
\Sigma^+:=\Sigma^{++}\oplus\Sigma^{--},\hspace{1cm}\Sigma^-:=\Sigma^{+-}\oplus\Sigma^{-+},
\end{equation*}  
the spinor bundle $\Sigma$ splits into
$$\Sigma=\Sigma^+\oplus\Sigma^-=\Sigma^{++}\oplus\Sigma^{--}\oplus\Sigma^{+-}\oplus\Sigma^{-+}.$$
These splittings correspond to the splittings (\ref{splitting H0}) and (\ref{desc_1_rep})-(\ref{rep_prod_tensor}) of the spinorial representation. 

\subsection{Notation}\label{section notation}
We will use the following notation: if $\tilde{s}\in\tilde{Q}$ is a given spinorial frame, the brackets $[\cdot]$ will denote the coordinates in $\mathbb{H}_0$ of the spinor fields in the frame $\tilde{s},$ that is, for $\varphi\in\Sigma,$
\begin{equation*}
\varphi\simeq[\tilde{s},[\varphi]]\hspace{0.2in}\in\hspace{0.2in}\Sigma\ \simeq\ \tilde{Q}\times\mathbb{H}_0/\rho_1\otimes\rho_2.
\end{equation*}
We will also use the brackets to denote the coordinates in $\tilde{s}$ of the elements of the Clifford algebra $Cl(E\oplus TM):$ $X\in Cl_0(E\oplus TM)$ and $Y\in Cl_1(E\oplus TM)$ will be respectively represented by $[X]\in\mathbb{H}_0$ and $[Y]\in\mathbb{H}_1$ such that, in $\tilde{s},$ 
\begin{equation*}X\simeq\begin{pmatrix}[X] & 0\\ 0 & \widehat{[X]}\end{pmatrix} \hspace{0.2in}\text{and}\hspace{0.2in} Y\simeq\begin{pmatrix}0 & [Y]\\ \widehat{[Y]} & 0\end{pmatrix}.\end{equation*}
Note that
\begin{equation*}
[X\cdot\varphi]=[X][\varphi]\hspace{0.2in}\text{and}\hspace{0.2in}[Y\cdot\varphi]=\sigma i\ [Y]\widehat{[\varphi]},
\end{equation*}
and that, in a spinorial frame $\tilde{s}\in\tilde{Q}$ such that $\pi(\tilde{s})=(e_0,e_1,e_2,e_3),$ where $\pi:\tilde{Q}\to Q_E\times_M Q_M$ is the natural projection onto the bundle of the orthonormal frames of $E\oplus TM$ adapted to the splitting, $e_0,e_1,e_2$ and $e_3\in Cl_1(E\oplus TM)$ are respectively represented by $\sigma i\e, I, iJ$  and $K\in \mathbb{H}_1$ (recall (\ref{aplicliff}) and (\ref{rep_alg})).

\section{Spinor representation of Lorentzian surfaces}\label{section spin representation}
\subsection{The main result} In this section we present the principal theorem concerning the spinor representation of Lorentzian surfaces immersed in $\R^{2,2}.$ This extends to the signature $(2,2)$ the main results of \cite{bayard_lawn_roth} and \cite{bayard1}.

\begin{thm}\label{thm_prin}
Let $(M,g)$ be a simply connected Lorentzian surface and $E$ a Lorentzian bundle of rank 2 on $M$ equipped with a compatible connection. We assume that $M$ and $E$ are oriented (in space and in time), with given spin structures. Let $\Sigma=\Sigma E\otimes\Sigma M$ be the twisted spinor bundle and $D$ its Dirac operator, defined in (\ref{def dirac}). Let $\vec{H}\in \Gamma(E)$ be a section of $E.$ The three following statements are equivalent:
\begin{enumerate}
\item There is a spinor field $\varphi\in\Gamma(\Sigma)$ solution of the Dirac equation
\begin{equation}\label{eqn dirac th}
D\varphi=\vec{H}\cdot\varphi,\hspace{1cm}\mbox{with}\hspace{1cm}H(\varphi,\varphi)=1.
\end{equation}
\item There is a spinor field $\varphi\in\Gamma(\Sigma)$ with $H(\varphi,\varphi)=1$ solution of 
\begin{equation}\label{first killing equation}
\nabla_X\varphi=-\frac{1}{2}\sum_{j=2}^3\epsilon_je_j\cdot B(X,e_j)\cdot\varphi
\end{equation}
for all $X\in TM,$ where $\epsilon_j=g(e_j,e_j)$ and $B:TM\times TM\to E$ is bilinear symmetric with $\frac{1}{2}tr_gB=\vec{H}.$
\item There is an isometric immersion $F$ of $(M,g)$ into $\R^{2,2}$ with normal bundle $E,$ second fundamental form $B$ and mean curvature $\vec{H}.$
\end{enumerate} 
Moreover, $F=\int\xi,$ where $\xi$ is the closed $1$-form on $M$ with values in $\R^{2,2}$ (see Lemma \ref{lem xi R22}) defined by 
\begin{equation*}
\xi(X):=\laa X\cdot\varphi,\varphi\raa
\end{equation*}
for all $X\in TM.$ 
\end{thm}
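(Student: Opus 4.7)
\bigskip

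\noindent\textbf{Proof plan.} The natural route is the cycle $(3)\Rightarrow(2)\Rightarrow(1)\Rightarrow(2)\Rightarrow(3)$, together with the representation formula at the end. Two of these implications are essentially tautological and two contain the real content.

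\medskip

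\noindent\emph{The easy direction $(3)\Rightarrow(2)$.} This is exactly the spinorial Gauss formula (\ref{for_gauss_paralelo}): pick any constant spinor $\varphi_0\in\Sigma\R^{2,2}$ with $H(\varphi_0,\varphi_0)=1$ (the $\A$-bilinear form $H$ is $Spin(2,2)$-invariant, hence constant on parallel spinors, and non-degenerate so such a choice exists), and restrict it to $M$. The identification $\Sigma\R^{2,2}_{|M}\simeq\Sigma E\otimes\Sigma M$ recalled in Section \ref{gauss_formula} gives a spinor $\varphi\in\Gamma(\Sigma)$ with $H(\varphi,\varphi)=1$, and (\ref{formula_gauss_espinorial}) together with $\overline{\nabla}\varphi_0=0$ yields (\ref{first killing equation}) with $B$ the second fundamental form.

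\medskip

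\noindent\emph{The trace direction $(2)\Rightarrow(1)$.} Apply $-e_2\cdot\nabla_{e_2}\varphi+e_3\cdot\nabla_{e_3}\varphi$ to (\ref{first killing equation}). Using $e_j\cdot e_j=-\epsilon_j$ and the Clifford relations $e_j\cdot e_k+e_k\cdot e_j=-2\epsilon_j\delta_{jk}$ on the tangent side, together with the fact that $B(X,Y)\in E$ Clifford-anticommutes with tangent vectors, the double sum collapses to the single term $\frac12 tr_gB\cdot\varphi=\vec H\cdot\varphi$. This is a direct algebraic computation with no obstruction.

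\medskip

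\noindent\emph{The key algebraic step $(1)\Rightarrow(2)$.} This is the main technical content. Given a normalised solution $\varphi$ of the Dirac equation, one has to manufacture a symmetric $B:TM\times TM\to E$ with $\frac12 tr_gB=\vec H$ such that (\ref{first killing equation}) holds. The strategy, following the Riemannian pattern in \cite{bayard_lawn_roth,bayard1}, is to define $B$ directly from $\varphi$: in a local spinorial frame, set
\begin{equation*}
B(X,Y) := -2\sum_{\nu=0,1}\epsilon_\nu\,\laa\nabla_X\varphi,e_\nu\cdot Y\cdot\varphi\raa_{\text{proj}}\,e_\nu,
\end{equation*}
or more intrinsically, characterise $B(X,\cdot)\in E\otimes T^*M$ by requiring that $\nabla_X\varphi+\tfrac12\sum_j\epsilon_j e_j\cdot B(X,e_j)\cdot\varphi$ be orthogonal to $\varphi$ and to the image of Clifford multiplication applied to $\varphi$. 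The four lines in the decomposition $\Sigma=\Sigma^{++}\oplus\Sigma^{--}\oplus\Sigma^{+-}\oplus\Sigma^{-+}$ and the $\A$-valued product $H$ provide just enough data (each of the four complex components of $\nabla_X\varphi$ determines one component of $B(X,\cdot)$) so that $\nabla_X\varphi$ is fully represented in this form. The constraint $H(\varphi,\varphi)=1$ and its differential $\langle\nabla_X\varphi,\varphi\rangle=0$ are what make this decomposition consistent. The symmetry $B(X,Y)=B(Y,X)$ will \emph{not} be automatic; it is precisely here that the Dirac equation $D\varphi=\vec H\cdot\varphi$ is used, forcing the antisymmetric part to vanish, and yielding at the same time $\frac12 tr_gB=\vec H$. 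This is the main obstacle and has to be done carefully, taking full advantage of the explicit quaternionic description of $\Sigma$ in Section \ref{prelim}.

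\medskip

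\noindent\emph{The integration step $(2)\Rightarrow(3)$ and the representation formula.} Define $\xi(X):=\laa X\cdot\varphi,\varphi\raa\in\mathbb{H}_1$. First check, via (\ref{prop_prod_escal_vector}), that $\xi(X)$ satisfies $\xi(X)=-\widehat{\overline{\xi(X)}}$, so $\xi$ takes values in $\R^{2,2}$ through the identification (\ref{iden_espa}). Next compute $d\xi(X,Y)=X\cdot\xi(Y)-Y\cdot\xi(X)-\xi([X,Y])$: using (\ref{first killing equation}) to replace $\nabla_X\varphi$ and the symmetry of $B$ together with the properties (\ref{prop_prod_escal_vector}) of $\laa\cdot,\cdot\raa$, the tangential terms cancel by symmetry of $B$ and the normal terms cancel for Clifford-algebraic reasons (two normal Clifford factors produce a symmetric contribution). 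Hence $\xi$ is closed; this is exactly Lemma \ref{lem xi R22} referred to in the statement. Since $M$ is simply connected, $F:=\int\xi$ is well-defined on $M$. A direct computation using (\ref{properties scalar product}) shows $g(\xi(X),\xi(Y))=g(X,Y)$, so $F$ is an isometric immersion; the normal bundle is naturally identified with $E$ via the same formula $\nu\mapsto\laa\nu\cdot\varphi,\varphi\raa$, and differentiating $dF(Y)=\xi(Y)$ and re-using (\ref{first killing equation}) recovers $B$ as the second fundamental form and $\vec H$ as the mean curvature. This closes the cycle and simultaneously establishes the representation formula $F=\int\xi$.
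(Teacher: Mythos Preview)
Your overall cycle and the treatment of the easy implications $(3)\Rightarrow(2)\Rightarrow(1)$ and of the Weierstrass step match the paper. The one place where your route genuinely diverges from the paper is the key step $(1)\Rightarrow(2)$.

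You propose to recover $B$ component by component via the splitting $\Sigma=\Sigma^{++}\oplus\Sigma^{--}\oplus\Sigma^{+-}\oplus\Sigma^{-+}$, in the spirit of \cite{bayard_lawn_roth,bayard1}. The paper instead works entirely with the $\A$-valued form $H$: it observes that $\{\varphi,\ e_2\cdot e_3\cdot\varphi,\ e_3\cdot e_1\cdot\varphi,\ e_1\cdot e_2\cdot\varphi\}$ is $H$-orthogonal (as an $\A$-module basis), proves $H(\nabla_X\varphi,\varphi)=0$ from the normalisation and $H(\nabla_X\varphi,e_2\cdot e_3\cdot\varphi)=0$ from the Dirac equation, and concludes directly that $\nabla_X\varphi=\eta(X)\cdot\varphi$ with $\eta(X)\in e_2\cdot E\oplus e_3\cdot E$. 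The bilinear form is then defined by the clean formula $\langle B(X,Y),\nu\rangle=-2\langle X\cdot\nabla_Y\varphi,\nu\cdot\varphi\rangle$, and a short Clifford computation identifies $\eta$ with the right-hand side of (\ref{first killing equation}). The paper explicitly remarks that this argument \emph{avoids} any decomposition of the spinor field and thereby simplifies the corresponding lemmas in \cite{bayard_lawn_roth,bayard1}; what your approach buys is that it is the familiar one, but it is heavier in bookkeeping and your formula for $B$ (with an unspecified ``$\mathrm{proj}$'') is not yet a definition.

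A second, minor difference: for the closedness of $\xi$ you invoke the Killing equation (\ref{first killing equation}) and the symmetry of $B$, whereas the paper's Lemma~\ref{lem xi R22} uses only the Dirac equation $D\varphi=\vec H\cdot\varphi$. Both work, but the paper's version makes the lemma available already under hypothesis~(1), which streamlines the logic.
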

The claims $(3)\Rightarrow(2)\Rightarrow(1)$ are direct consequences of the spinorial Gauss formula (Section \ref{gauss_formula}). We now prove $(1)\Rightarrow(3)$ using the fundamental theorem of submanifolds (see Remark \ref{teo_fund_inmersion}) and the following
\begin{prop}\label{prop_princ}
Let $M, E,\Sigma$ and $\vec{H}$ as in Theorem \ref{thm_prin}. Assume that there exists a spinor field $\varphi\in\Gamma(\Sigma)$ solution of (\ref{eqn dirac th}). Then the bilinear map $B:TM\times TM\to E$ defined by 
\begin{equation}\label{seg_for_fund}
\la B(X,Y),\nu\ra=-2\la X\cdot\nabla_Y\varphi,\nu\cdot\varphi\ra
\end{equation}
for all $X,Y\in\Gamma(TM)$ and all $\nu\in\Gamma(E)$ is symmetric, satisfies the Gauss, Codazzi and Ricci equations and is such that $\vec{H}=\frac{1}{2}tr_gB.$
\end{prop}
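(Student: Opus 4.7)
The plan is to first establish the stronger \emph{spinorial Gauss formula}, namely that $\varphi$ itself satisfies the generalised Killing equation (\ref{first killing equation}) with $B$ as defined by (\ref{seg_for_fund}). Once this is available, the symmetry of $B$, the trace identity $\vec H=\frac{1}{2}\mathrm{tr}_gB$, and the Gauss-Codazzi-Ricci equations all follow as algebraic and integrability consequences, and Remark \ref{teo_fund_inmersion} supplies the desired isometric immersion, yielding $(1)\Rightarrow(3)$ of Theorem \ref{thm_prin}.

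\textbf{Killing equation: the main obstacle.} The trace identity and the symmetry of $B$ come out first, directly from (\ref{seg_for_fund}), using the symmetry property $\la X\cdot\psi,\psi'\ra=\la\psi,X\cdot\psi'\ra$ of (\ref{properties scalar product}), the Clifford-pairing identity $\la X\cdot\varphi,Y\cdot\varphi\ra=-\la X,Y\ra\la\varphi,\varphi\ra$ (a consequence of $X\cdot Y+Y\cdot X=-2\la X,Y\ra$ and the symmetry of $\la\cdot,\cdot\ra$), the normalisation $\la\varphi,\varphi\ra=\mathrm{Re}\,H(\varphi,\varphi)=1$, and the anti-commutation of $X\in TM$ with $\nu\in E$; the contraction $\frac{1}{2}\sum\epsilon_j\la B(e_j,e_j),\nu\ra$ is matched against $D\varphi=\vec H\cdot\varphi$ to give the trace identity. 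The crux of the proof is then to establish the Killing equation itself. Introduce the discrepancy
$$\eta_X:=\nabla_X\varphi+\frac{1}{2}\sum_{j=2}^{3}\epsilon_je_j\cdot B(X,e_j)\cdot\varphi\in\Gamma(\Sigma),$$
with the goal of proving $\eta\equiv 0$. By construction of $B$, the projection $\la\eta_X,\nu\cdot\varphi\ra=0$ for every $\nu\in E$, while the Dirac equation together with the already-proved trace identity and symmetry yields $\sum_j\epsilon_je_j\cdot\eta_{e_j}=0$. To upgrade these constraints to $\eta_X=0$, I exploit the algebraic setup of Section \ref{prelim}: the fibrewise splitting $\Sigma=\Sigma^{++}\oplus\Sigma^{--}\oplus\Sigma^{+-}\oplus\Sigma^{-+}$ together with $H(\varphi,\varphi)=1$ ensures that the Clifford orbit $\{\varphi,\,e_a\cdot\varphi,\,e_a\cdot e_b\cdot\varphi\}$ spans $\Sigma$ fibrewise, so pairing $\eta_X$ against this spanning set through $\la\cdot,\cdot\ra$ converts the two constraints into enough scalar identities to force the vanishing. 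This transition from finitely many Dirac-type constraints to the full pointwise rigidity of $\eta_X$ is the main technical obstacle.

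\textbf{Integrability: Gauss, Codazzi, Ricci.} With (\ref{first killing equation}) in hand, differentiate it in $Y$, antisymmetrise in $X,Y$ and subtract $\nabla_{[X,Y]}\varphi$. The left-hand side is the curvature $R(X,Y)\varphi$ of the twisted spinor connection, which by the product structure $\nabla=\nabla^{\Sigma E}\otimes\mathrm{id}+\mathrm{id}\otimes\nabla^{\Sigma M}$ and the rank-two decomposition of $E$ and $TM$ equals
$$R(X,Y)\varphi=\frac{1}{2}\la R^E(X,Y)e_0,e_1\ra\,e_0\cdot e_1\cdot\varphi+\frac{1}{2}\la R^M(X,Y)e_2,e_3\ra\,e_2\cdot e_3\cdot\varphi.$$
The right-hand side, obtained by applying the Killing equation a second time, splits into terms quadratic in $B$ and terms linear in the covariant derivative $\tilde\nabla B$. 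Projecting the resulting identity onto the Clifford basis acting on $\varphi$ — still linearly independent by the spanning argument of the previous paragraph — the coefficient of $e_2\cdot e_3\cdot\varphi$ yields the Gauss equation, the coefficient of $e_0\cdot e_1\cdot\varphi$ yields Ricci, and the mixed tangent-normal coefficients yield Codazzi, completing the proof.
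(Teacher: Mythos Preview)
Your overall architecture---first upgrade the Dirac equation to the Killing equation (\ref{first killing equation}), then read off Gauss, Codazzi and Ricci as integrability conditions---is exactly the paper's strategy, and your treatment of the second step is fine. The gap is in the first step.

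\textbf{Symmetry of $B$.} You assert that symmetry follows ``directly'' from the algebraic properties of $\la\cdot,\cdot\ra$ and Clifford anticommutation. It does not: without any differential information, $\la X\cdot\nabla_Y\varphi,\nu\cdot\varphi\ra$ and $\la Y\cdot\nabla_X\varphi,\nu\cdot\varphi\ra$ are unrelated. The paper obtains symmetry from the Dirac equation together with the identity $\la e_2\cdot e_3\cdot\varphi,\nu\cdot\nu'\cdot\varphi\ra=0$ (which you never invoke). Concretely, one rewrites $e_2\cdot\nabla_{e_3}\varphi-e_3\cdot\nabla_{e_2}\varphi=-e_2\cdot e_3\cdot D\varphi=-e_2\cdot e_3\cdot\vec H\cdot\varphi$, and then that last identity kills the resulting pairing.

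\textbf{The Killing equation.} Your claim that ``$\la\eta_X,\nu\cdot\varphi\ra=0$ by construction of $B$'' is not what the definition gives: (\ref{seg_for_fund}) controls $\la e_j\cdot\nabla_X\varphi,\nu\cdot\varphi\ra$, i.e.\ the pairings $\la\eta_X,e_j\cdot\nu\cdot\varphi\ra$, not $\la\eta_X,\nu\cdot\varphi\ra$. More seriously, even counting generously, the constraints you list (these four pairings, plus $\la\eta_X,\varphi\ra=0$ from $|\varphi|^2=1$, plus the \emph{traced} relation $\sum\epsilon_je_j\cdot\eta_{e_j}=0$) do not pin down $\eta_X$ in the $8$-dimensional real fibre of $\Sigma$. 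The missing constraint is precisely $H(\nabla_X\varphi,e_2\cdot e_3\cdot\varphi)=0$ for \emph{each} $X$, and this is where the actual work lies: the paper derives it from the Dirac equation by the computation
\[
H(\nabla_{e_2}\varphi,e_2\cdot e_3\cdot\varphi)=\widehat{H(e_2\cdot\nabla_{e_2}\varphi,e_3\cdot\varphi)}=-H(\nabla_{e_3}\varphi,\varphi)-H(\varphi,\vec H\cdot e_3\cdot\varphi)=0,
\]
which uses (\ref{aplic_H_propiedad}) and the normalisation. The paper then works over the $\A$-module structure: $\{\varphi,\ e_2\cdot e_3\cdot\varphi,\ e_3\cdot e_1\cdot\varphi,\ e_1\cdot e_2\cdot\varphi\}$ is an $H$-orthogonal $\A$-basis of $\Sigma$, the two $\A$-valued vanishings $H(\nabla_X\varphi,\varphi)=H(\nabla_X\varphi,e_2\cdot e_3\cdot\varphi)=0$ force $\nabla_X\varphi$ into the $\A$-span of $e_3\cdot e_1\cdot\varphi$ and $e_1\cdot e_2\cdot\varphi$, and identifying those two coefficients with $B$ is then a short computation. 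Your real-valued pairing argument can be completed along these lines, but as written it stops one nontrivial identity short.
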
 

In the proposition and below, we use the same notation $\la\cdot,\cdot\ra$ to denote the scalar products on $TM,$ on $E,$ and on $\Sigma.$ As in \cite{friedrich} (and after in \cite{lawn}, \cite{lawn_roth}, \cite{morel}, \cite{roth} in codimension one, and in \cite{bayard_lawn_roth} and \cite{bayard1} in codimension two) the proof of this proposition relies on the fact that such a spinor field is necessarily a solution of (\ref{first killing equation}), with this bilinear map $B$:
\begin{lem}\label{lema_princ}
If $\varphi$ is a solution of (\ref{eqn dirac th}), then $\varphi$ solves the Killing type equation (\ref{first killing equation}) where $B$ is the bilinear map defined in (\ref{seg_for_fund}).
\end{lem}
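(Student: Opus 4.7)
The plan is to prove that the ``deviation spinor''
\[
T(X)\ :=\ \nabla_X\varphi\ +\ \frac{1}{2}\sum_{j=2}^{3}\epsilon_j\, e_j\cdot B(X,e_j)\cdot\varphi
\]
vanishes identically, which is precisely (\ref{first killing equation}). Following the scheme introduced by Friedrich in \cite{friedrich} and adapted in \cite{bayard_lawn_roth,bayard1}, I would combine three ingredients to pin down all the components of $T(X)$ in the spinor bundle: the normalisation $H(\varphi,\varphi)=1$, the Dirac equation $D\varphi=\vec H\cdot\varphi$, and the defining formula (\ref{seg_for_fund}) of $B$.

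First, differentiating $H(\varphi,\varphi)=1$ and using the symmetry of $H$ in (\ref{aplic_H_propiedad}) gives $H(\nabla_X\varphi,\varphi)=0$, and hence also $\langle\nabla_X\varphi,\varphi\rangle=0$; this controls the component of $T(X)$ ``along $\varphi$''. Second, applying $\sum_{j=2}^{3}\epsilon_j\, e_j\cdot$ to the definition of $T$, the derivative part equals $D\varphi=\vec H\cdot\varphi$ by hypothesis, while the correction part simplifies: the off-diagonal terms cancel by the symmetry $B(e_i,e_j)=B(e_j,e_i)$ together with the Clifford anticommutation $e_i\cdot e_j+e_j\cdot e_i=0$ for $i\neq j$, and the diagonal terms collapse to $-\vec H\cdot\varphi$ thanks to $\mathrm{tr}_g B=2\vec H$. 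This yields the ``trace identity'' $\sum_{j=2}^{3}\epsilon_j\, e_j\cdot T(e_j)=0$. Third, for $Y\in TM$ and $\nu\in E$ I would test $T(X)$ against $Y\cdot\nu\cdot\varphi$: by the symmetry (\ref{properties scalar product}) of the scalar product under Clifford action, $\langle\nabla_X\varphi,Y\cdot\nu\cdot\varphi\rangle=\langle Y\cdot\nabla_X\varphi,\nu\cdot\varphi\rangle=-\tfrac{1}{2}\langle B(Y,X),\nu\rangle$ by (\ref{seg_for_fund}), and a Clifford manipulation of the correction term of $T$, using the orthogonality of tangent and normal directions (which makes their Clifford generators anticommute in $Cl(E\oplus TM)$) and the symmetry of $B$, should show that the two contributions cancel, so that $\langle T(X),Y\cdot\nu\cdot\varphi\rangle=0$.

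The main obstacle is the final algebraic step that converts these identities into the pointwise conclusion $T(X)\equiv 0$. I have to show that a spinor $\psi\in\Sigma_p$ which is $\langle\cdot,\cdot\rangle$-orthogonal to $\varphi$ and to every $Y\cdot\nu\cdot\varphi$ ($Y\in TM$, $\nu\in E$), and which is annihilated by the trace map in the sense that $\sum_j\epsilon_j\, e_j\cdot\psi_j=0$ when $\psi_j$ arise as the values of $T$ on an orthonormal frame, necessarily vanishes; equivalently, the Clifford orbit of $\varphi$ spans enough of $\Sigma_p$ to detect $T$. Since $H(\varphi,\varphi)=1$, an element of $Spin(2,2)$ can conjugate $\varphi$ to a fixed normalised spinor in $\mathbb{H}_0$, and the non-degeneracy claim then reduces to an explicit linear-algebra check on $\mathbb{H}_0$ using the representation (\ref{rep_alg}) and the splittings (\ref{desc_1_rep}) of Section \ref{prelim subsection1}. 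Once this non-degeneracy is in place, the three constraints of the previous paragraph force $T\equiv 0$, which is exactly the Killing-type equation (\ref{first killing equation}).
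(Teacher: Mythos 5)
Your three constraints point in the right direction, but the proposal has a genuine gap at exactly the place you flag as the ``main obstacle''. The non-degeneracy claim you state is not correct as written: a spinor $\psi\in\Sigma_p$ that is orthogonal to $\varphi$ and to every $Y\cdot\nu\cdot\varphi$ ($Y\in TM$, $\nu\in E$) need \emph{not} vanish. The relevant structure is the $H$-orthogonal $\A$-basis
$\{\varphi,\ e_2\cdot e_3\cdot\varphi,\ e_3\cdot e_1\cdot\varphi,\ e_1\cdot e_2\cdot\varphi\}$
of $\Sigma_p\simeq\mathbb{H}_0$ with $H$-norms $1,-1,1,-1$; orthogonality to $\varphi$ and to $Y\cdot\nu\cdot\varphi$ only kills three of the four $\A$-coordinates of $T(X)$, leaving the full $\A$-line through $e_2\cdot e_3\cdot\varphi$ free. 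You intend the trace identity $\sum_j\epsilon_je_j\cdot T(e_j)=0$ to remove that last component, but this is a single spinorial equation on the \emph{pair} $(T(e_2),T(e_3))$, not a pointwise condition on each $T(X)$, so a further non-obvious computation is required (one would have to write $T(e_j)=b_j\,e_2\cdot e_3\cdot\varphi$ with $b_j\in\A$, use the anticommutation of $\sigma=e_0\cdot e_1\cdot e_2\cdot e_3$ with vectors, and then the $\A$-independence of $e_2\cdot\varphi$ and $e_3\cdot\varphi$). The paper sidesteps this entirely: it works with $\nabla_X\varphi$ directly, and uses the Dirac equation \emph{pointwise} to trade $e_2\cdot\nabla_{e_2}\varphi$ for $e_3\cdot\nabla_{e_3}\varphi-\vec H\cdot\varphi$, which yields $H(\nabla_X\varphi,e_2\cdot e_3\cdot\varphi)=0$ for every $X$ --- a strictly stronger and much simpler conclusion than your trace identity. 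That single calculation is the missing key idea, and without it the deferred linear algebra does not go through in the form you describe.

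There is a second gap: both your constraint 2 (cancellation of off-diagonal terms in the trace computation) and your constraint 3 (the asserted cancellation $\langle T(X),Y\cdot\nu\cdot\varphi\rangle=0$) invoke the symmetry $B(X,Y)=B(Y,X)$. This symmetry is not automatic from the definition (\ref{seg_for_fund}); a direct computation gives $\langle T(X),Y\cdot\nu\cdot\varphi\rangle=\tfrac12\bigl(\langle B(X,Y),\nu\rangle-\langle B(Y,X),\nu\rangle\bigr)$, so constraint 3 is \emph{equivalent} to symmetry, not a free consequence. You must therefore establish symmetry first, independently, which can be done from the Dirac equation together with (\ref{properties scalar product}) and the auxiliary orthogonality $\langle e_2\cdot e_3\cdot\varphi,\nu\cdot\nu'\cdot\varphi\rangle=0$ of (\ref{pty proof principal lemma}); note that this last property is also needed for the Clifford manipulation in your constraint 3 and is nowhere mentioned in your sketch.
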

\begin{proof}
We consider the $\A$-module structure $\sigma:=e_0\cdot e_1\cdot e_2\cdot e_3,$ defined on the Clifford bundle $Cl(E\oplus TM)$ by the multiplication on the left, and on spinor bundle $\Sigma$ by the Clifford action. The map $H:\Sigma\times\Sigma\to \A$ is $\A$-bilinear with respect to this $\A$-module structure, whereas the Clifford action satisfies \[\sigma\cdot(X\cdot\varphi)=(\sigma\cdot X)\cdot\varphi=-X\cdot(\sigma\cdot\varphi),\] for all $\varphi\in\Sigma$ and $X\in E\oplus TM.$
Now, we consider the following spinors:
\begin{equation*}\{ \varphi,\ \ e_2\cdot e_3\cdot\varphi,\ \ e_3\cdot e_1\cdot\varphi,\ \ e_1\cdot e_2\cdot\varphi\}.\end{equation*}
Using the identities in (\ref{aplic_H_propiedad}), it is easy to show that these spinors are orthogonal with respect to the form $H,$ with norm $1,-1,1,-1$ respectively; in particular,
\begin{align*} \nabla_X\varphi & = H(\nabla_X\varphi,\varphi)\varphi-H(\nabla_X\varphi,e_2\cdot e_3\cdot\varphi)e_2\cdot e_3\cdot\varphi\\&\  +H(\nabla_X\varphi,e_3\cdot e_1\cdot\varphi)e_3\cdot e_1\cdot\varphi-H(\nabla_X\varphi,e_1\cdot e_2\cdot\varphi)e_1\cdot e_2\cdot\varphi.
\end{align*}
for all $X\in TM.$ We claim that
\begin{equation}\label{two claims to prove}
H(\nabla_X\varphi,\varphi)=0\hspace{1cm}\mbox{and}\hspace{1cm}H(\nabla_X\varphi,e_2\cdot e_3\cdot\varphi)=0.
\end{equation}
The first identity is a direct consequence of $H(\varphi,\varphi)=1.$ The second one is a consequence of the Dirac equation (\ref{eqn dirac th}): assuming that $X=e_2$ (the proof is analogous if $X=e_3$), we have   \begin{align*}
H(\nabla_{e_2}\varphi,e_2\cdot e_3\cdot\varphi) &= \widehat{H(e_2\cdot\nabla_{e_2}\varphi,e_3\cdot\varphi)}= H(e_3^2\cdot\nabla_{e_3}\varphi,\varphi)-\widehat{H(\vec{H}\cdot\varphi,e_3\cdot\varphi)}\\
&= -H(\nabla_{e_3}\varphi,\varphi)-H(\varphi,\vec{H}\cdot e_3\cdot\varphi).
\end{align*} 
But $H(\nabla_{e_3}\varphi,\varphi)=0$ and
$$H(\varphi,\vec{H}\cdot e_3\cdot\varphi)=H(e_3\cdot\vec{H}\cdot\varphi,\varphi)=-H(\vec{H}\cdot e_3\cdot\varphi,\varphi)=-H(\varphi,\vec{H}\cdot e_3\cdot\varphi),$$
that is $H(\varphi,\vec{H}\cdot e_3\cdot\varphi)=0,$ and the second identity in (\ref{two claims to prove}) follows. We thus get 
\begin{equation}\label{killing eqn pr}
\nabla_X\varphi=\eta(X)\cdot\varphi
\end{equation}
with 
$$\eta(X):=H(\nabla_X\varphi,e_3\cdot e_1\cdot\varphi)e_3\cdot e_1-H(\nabla_X\varphi,e_1\cdot e_2\cdot\varphi)e_1\cdot e_2.$$ 
Using the relations $\sigma\cdot e_3\cdot e_1=e_2\cdot e_0$ and $\sigma\cdot e_1\cdot e_2=e_0\cdot e_3,$ we get that $\eta(X)$ has the form
\begin{equation}\label{expresi_eta}
\eta(X)=e_2\cdot \nu_2+e_3\cdot\nu_3,
\end{equation}
for some $\nu_2,\nu_3\in E.$ Now, recalling (\ref{properties scalar product}), for each $\nu \in E$ and $j=2,3,$
\begin{equation*}
\la B(e_j,X),\nu\ra=-2\la e_j\cdot\nabla_X\varphi,\nu\cdot\varphi\ra=-2\la\nabla_X\varphi,e_j\cdot\nu\cdot\varphi\ra= -2\la\eta(X)\cdot\varphi,e_j\cdot\nu\cdot\varphi\ra,
\end{equation*}
which, using (\ref{expresi_eta}), yields 
\begin{equation}\label{seg_form_fun_iguald}
\la B(e_j,X),\nu\ra =-2\la e_2\cdot\nu_2\cdot\varphi,e_j\cdot\nu\cdot\varphi\ra -2\la e_3\cdot\nu_3\cdot\varphi,e_j\cdot\nu\cdot\varphi\ra.
\end{equation}
We note that for all $\nu,\nu'\in E$ we have
\begin{equation}\label{pty proof principal lemma}
\la e_2\cdot e_3\cdot\varphi,\nu\cdot\nu'\cdot\varphi\ra=0;
\end{equation}
the proof is analogous to the proof of Lemma 3.1 in \cite{bayard1} and is omitted here. Thus (\ref{seg_form_fun_iguald}) reads 
$$\la B(e_2,X),\nu\ra=-2\la\nu_2\cdot\varphi,\nu\cdot\varphi\ra=2\la\nu_2,\nu\ra$$
and
$$\la B(e_3,X),\nu\ra=2\la\nu_3\cdot\varphi,\nu\cdot\varphi\ra=-2\la\nu_3,\nu\ra.$$
Indeed, these last identities hold since, for $i=2,3,$ 
\begin{eqnarray*}
\langle\nu_i\cdot\varphi,\nu\cdot\varphi\rangle&=&\langle\nu\cdot\nu_i\cdot\varphi,\varphi\rangle= -\langle\nu_i\cdot\nu\cdot\varphi,\varphi\rangle-2\langle\nu_i,\nu\rangle\langle\varphi,\varphi\rangle\\
&=&-\langle\nu\cdot\varphi,\nu_i\cdot\varphi\rangle-2\langle\nu_i,\nu\rangle
\end{eqnarray*}
and thus $\langle\nu_i\cdot\varphi,\nu\cdot\varphi\rangle=-\langle\nu_i,\nu\rangle.$ Hence $\nu_2=\frac{1}{2}B(e_2,X)$ and $\nu_3=-\frac{1}{2}B(e_3,X),$ and (\ref{killing eqn pr})-(\ref{expresi_eta}) imply formula (\ref{first killing equation}).
\end{proof}

\begin{remar}
The proof given here does not use any decomposition of the spinor fields; using the same ideas, it should be possible to simplify the proofs of \cite[Lemma 3.1]{bayard_lawn_roth} and \cite[Lemma 2.1]{bayard1}.
\end{remar}

\begin{proof}[Proof of Proposition \ref{prop_princ}]
The bilinear map $B$ is symmetric in view of the Dirac equation (\ref{eqn dirac th}) together with the properties (\ref{properties scalar product}) and (\ref{pty proof principal lemma}). The equations of Gauss, Codazzi and Ricci appear to be the integrability conditions of (\ref{first killing equation}): the proof is completely analogous to that given in \cite[Theorem 2]{bayard1}, and is therefore omitted.
\end{proof}

In the next section we prove the second part of Theorem \ref{thm_prin}. 
\subsection{Weierstrass representation}\label{weierstrass_repre}
We assume that  $\varphi\in \Gamma(\Sigma)$ is a spinor field solution of (\ref{eqn dirac th}), and we define the $\mathbb{H}_1$-valued $1$-form $\xi$ by 
\begin{equation}\label{forma_xi}
\xi(X)=\laa X\cdot\varphi,\varphi\raa\hspace{0.1in}\in\hspace{0.1in}\mathbb{H}_1
\end{equation} 
for $X\in TM\oplus E,$ where the pairing $\laa.,.\raa:\Sigma\times\Sigma\to \mathbb{H}_1$ is defined in (\ref{prod_escal_vector}).

\begin{lem}\label{lem xi R22}
The $1$-form $\xi$ satisfies the following properties:
\begin{enumerate}
\item $\xi=-\widehat{\overline{\xi}},$ that is $\xi$ takes its values in $\R^{2,2}\subset\mathbb{H}_1;$
\item $\xi:TM\rightarrow \R^{2,2}$ is closed, that is $d\xi=0.$
\end{enumerate} 
\end{lem}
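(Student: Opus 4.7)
The plan is to treat the two claims separately. For (1), I would proceed purely algebraically using the two properties of the $\mathbb{H}_1$-valued pairing listed in (\ref{prop_prod_escal_vector}). Applying the first (complex-conjugation) property to $\xi(X)=\laa X\cdot\varphi,\varphi\raa$ gives $\overline{\xi(X)}=\laa\varphi,X\cdot\varphi\raa$; then the second property yields $\widehat{\laa\varphi,X\cdot\varphi\raa}=-\laa X\cdot\varphi,\varphi\raa=-\xi(X)$. Combining these gives $\widehat{\overline{\xi(X)}}=-\xi(X)$, which is precisely the condition in (\ref{iden_espa}) defining $\R^{2,2}\subset\mathbb{H}_1$.

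For (2), the key input is that by Lemma \ref{lema_princ}, the spinor $\varphi$ in fact satisfies the Killing-type equation (\ref{first killing equation}), $\nabla_X\varphi=\eta(X)\cdot\varphi$ with $\eta(X)=-\frac{1}{2}\sum_{j=2}^{3}\epsilon_j\, e_j\cdot B(X,e_j)$. At a fixed point $p\in M$, I would choose parallel extensions of tangent vectors so that $\nabla_XY=\nabla_YX=0$ and $[X,Y]=0$ at $p$, and expand $d\xi(X,Y)=X(\xi(Y))-Y(\xi(X))$ using the metric compatibility of the spinorial connection with $\laa\cdot,\cdot\raa$ and with the Clifford action. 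This produces four terms, each involving $\nabla_X\varphi$ or $\nabla_Y\varphi$; substituting the Killing equation reduces them all to expressions of the form $\laa A\cdot\varphi,\varphi\raa$ with $A\in Cl(E\oplus TM)$.

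The crucial observation is that $\eta(X)$ is of even Clifford degree -- specifically a sum of elements $e_j\cdot\nu$ with $e_j$ tangent and $\nu$ normal. Iterating (\ref{prop_prod_escal_vector}) twice (once for the tangent factor, once for the normal factor) makes the two hats cancel and yields the even-degree transposition rule $\laa A\cdot\psi,\psi'\raa=-\laa\psi,A\cdot\psi'\raa$ for such $A$. This lets me pair the four terms into commutators:
\begin{equation*}
d\xi(X,Y)=\laa[Y,\eta(X)]\cdot\varphi,\varphi\raa-\laa[X,\eta(Y)]\cdot\varphi,\varphi\raa.
\end{equation*}
A short Clifford computation using $Y\cdot B(X,e_j)=-B(X,e_j)\cdot Y$ (orthogonality of $TM$ and $E$) together with $Y\cdot e_j+e_j\cdot Y=-2\la Y,e_j\ra$ collapses the sum to $[Y,\eta(X)]=B(X,Y)$. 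The symmetry $B(X,Y)=B(Y,X)$ then makes the two contributions cancel exactly, giving $d\xi=0$.

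The main obstacle is bookkeeping: because $X,Y$ are of odd degree while $\eta(X)$ is of even degree, they obey different sign/hat rules when moving across the pairing, and a naive expansion produces a thicket of $\widehat{\phantom{a}}$ and $\overline{\phantom{a}}$ operations. The calculation only runs cleanly once one isolates the even-degree transposition rule, after which the whole closedness reduces to the symmetry of the second fundamental form.
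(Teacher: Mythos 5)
Your Part~(1) is essentially the paper's argument: you apply the two identities of (\ref{prop_prod_escal_vector}) in the opposite order, but the content is the same, and both land on $\widehat{\overline{\xi(X)}}=-\xi(X)$.

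Part~(2) is correct but follows a genuinely different route. The paper proves closedness \emph{directly from the Dirac equation}: after expanding $d\xi(e_2,e_3)$, the two ``bad'' terms are recognized as $\pm\widehat{\overline{(\cdot)}}$ of the two others, and the two ``good'' terms recombine into $-\laa D\varphi,e_2\cdot e_3\cdot\varphi\raa=-\laa\vec H\cdot\varphi,e_2\cdot e_3\cdot\varphi\raa$, which one then shows equals its own $\widehat{\overline{(\cdot)}}$, so everything cancels. Your argument instead imports Lemma~\ref{lema_princ}, substitutes the Killing-type equation $\nabla_X\varphi=\eta(X)\cdot\varphi$, derives the even-degree transposition rule $\laa A\cdot\psi,\psi'\raa=-\laa\psi,A\cdot\psi'\raa$ for $A=e_j\cdot\nu$ with $e_j$ tangent and $\nu$ normal (correct: two applications of (\ref{prop_prod_escal_vector}) give $\laa\psi,\nu\cdot e_j\cdot\psi'\raa$, and $\nu\cdot e_j=-e_j\cdot\nu$ produces the sign), reduces $d\xi(X,Y)$ to $\laa[Y,\eta(X)]\cdot\varphi,\varphi\raa-\laa[X,\eta(Y)]\cdot\varphi,\varphi\raa$, and computes $[Y,\eta(X)]=B(X,Y)$ so that the whole thing is $\laa(B(X,Y)-B(Y,X))\cdot\varphi,\varphi\raa=0$ by symmetry of $B$. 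This is tidy and more conceptual -- closedness becomes literally the statement that $B$ is symmetric -- at the cost of depending on Lemma~\ref{lema_princ} and on the symmetry of $B$ (established in Proposition~\ref{prop_princ} from the Dirac equation and (\ref{pty proof principal lemma})), whereas the paper's proof of the lemma is self-contained, relying on nothing beyond the Dirac equation and the algebraic identities for $\laa\cdot,\cdot\raa$. Both are valid at this point in the paper, since Lemma~\ref{lema_princ} and Proposition~\ref{prop_princ} precede Lemma~\ref{lem xi R22}; just be explicit that you are invoking the symmetry of $B$, which itself needs a separate argument.
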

\begin{proof}
\emph{1.} Using the properties (\ref{prop_prod_escal_vector}), we get 
\begin{equation*}
\xi(X)= \laa X\cdot\varphi,\varphi\raa=-\widehat{\laa \varphi,X\cdot\varphi\raa}=-\widehat{\overline{\laa X\cdot\varphi,\varphi \raa}}=-\widehat{\overline{\xi(X)}};
\end{equation*}
the result then follows from (\ref{iden_espa}).
\\\emph{2.} By a straightforward computation, we get 
\begin{equation*} d\xi(e_2,e_3)=\laa e_3\cdot\nabla_{e_2}\varphi,\varphi\raa-\laa e_2\cdot\nabla_{e_3}\varphi,\varphi\raa+\laa e_3\cdot\varphi,\nabla_{e_2}\varphi\raa-\laa e_2\cdot\varphi,\nabla_{e_3}\varphi \raa. 
\end{equation*} 
Now, the last two terms satisfy
\begin{equation*} \laa e_3\cdot\varphi,\nabla_{e_2}\varphi\raa=-\widehat{\overline{\laa e_3\cdot\nabla_{e_2}\varphi,\varphi\raa}}\hspace{.5cm}\mbox{and}\hspace{.5cm} \laa e_2\cdot\varphi,\nabla_{e_3}\varphi\raa=-\widehat{\overline{\laa e_2\cdot\nabla_{e_3}\varphi,\varphi\raa}}.\end{equation*}  
Moreover
\begin{align*}
\laa e_3\cdot\nabla_{e_2}\varphi,\varphi\raa-\laa e_2\cdot\nabla_{e_3}\varphi,\varphi\raa &= -\widehat{\laa e_2\cdot e_3\cdot\nabla_{e_2}\varphi,e_2\cdot\varphi\raa}-\widehat{\laa e_3\cdot e_2\cdot\nabla_{e_3}\varphi,e_3\cdot\varphi\raa}\\
&= \laa e_2\cdot\nabla_{e_2}\varphi-e_3\cdot\nabla_{e_3}\varphi,e_2\cdot e_3\cdot\varphi\raa \\
&= -\laa D\varphi,e_2\cdot e_3\cdot\varphi\raa \\
&= -\laa \vec{H}\cdot\varphi,e_2\cdot e_3\cdot\varphi\raa,
\end{align*} 
and thus 
\begin{equation*} d\xi(e_2,e_3)=-\laa \vec{H}\cdot\varphi,e_2\cdot e_3\cdot\varphi\raa+\widehat{\overline{\laa \vec{H}\cdot\varphi,e_2\cdot e_3\cdot\varphi\raa}}.
\end{equation*}
Noting finally that
\begin{align*}
\laa \vec{H}\cdot\varphi,e_2\cdot e_3\cdot\varphi\raa &= -\widehat{\laa \varphi,\vec{H}\cdot e_2\cdot e_3\cdot\varphi\raa}= -\widehat{\laa \varphi, e_2\cdot e_3\cdot\vec{H}\cdot\varphi\raa}\\
&= \widehat{\laa  e_2\cdot e_3\cdot\varphi, \vec{H}\cdot\varphi\raa}=\widehat{\overline{\laa \vec{H}\cdot\varphi,e_2\cdot e_3\cdot\varphi\raa}},
\end{align*} 
we get that $d\xi=0.$
\end{proof}

We now assume that $M$ is simply connected; then, there exists a function $F:M\to \R^{2,2}$ such that $dF=\xi.$ The next theorem follows from the properties of the Clifford action and its proof is analogous to the proof of \cite[Theorem 3]{bayard1}, and is therefore omitted.   

\begin{thm}\label{teorema_isometria}
1. The map $F:M\to \R^{2,2}$ is an isometry.
\\2. The map 
\begin{align*}
\Phi_E:\hspace{1cm} E & \longrightarrow M\times\R^{2,2} \\
 X\in E_m & \longmapsto (F(m),\xi(X))
\end{align*}
is an isometry between $E$ and the normal bundle $N(F(M))$ of $F(M)$ in $\R^{2,2},$ preserving connections and second fundamental forms.
\end{thm}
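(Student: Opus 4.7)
The plan is to work in a local spinorial frame and reduce the claim to algebraic identities in $\HAC.$ In a spinorial frame $\tilde{s}$ at $m\in M$ with $\pi(\tilde{s})=(e_0,e_1,e_2,e_3),$ the conventions of Section \ref{section notation} give $[X\cdot\varphi]=\sigma i\,[X]\widehat{[\varphi]}$ for $X\in Cl_1(E\oplus TM)$ and $\laa\psi,\psi'\raa=\sigma i\,\overline{[\psi']}[\psi],$ so that
\[\xi(X)\ =\ \laa X\cdot\varphi,\varphi\raa\ =\ -\,\overline{[\varphi]}\,[X]\,\widehat{[\varphi]}\ \in\ \R^{2,2}\subset\mathbb{H}_1,\]
the inclusion on the right being Lemma \ref{lem xi R22}. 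The normalisation $H(\varphi,\varphi)=1$ together with (\ref{aplic_H_restricta}) is precisely the statement $[\varphi]\in Spin(2,2),$ and hence $\overline{[\varphi]}=[\varphi]^{-1}.$ The map $[X]\mapsto\xi(X)$ thus agrees (up to sign) with $\Phi([\varphi])^{-1},$ where $\Phi$ is the double cover (\ref{cubriente}); in particular it is a linear isometry of $E_m\oplus T_mM\cong\R^{2,2}$ onto $\R^{2,2}.$ Restricting to $TM$ yields that $dF=\xi_{|TM}$ is a pointwise isometry, which gives part (1).

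For part (2), the same identity applied to $X\in E$ says $\Phi_E$ is a fibre isometry, and because the full map $\xi:E\oplus TM\to\R^{2,2}$ is an isometry with $E\perp TM$ in the source, we obtain $\xi(E_m)\perp\xi(T_mM)=dF(T_mM)$; by dimension count $\xi(E_m)=N_{F(m)}F(M),$ so $\Phi_E$ is an isometric bundle isomorphism onto the normal bundle.

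To verify preservation of connections and second fundamental forms, take $X\in T_mM,$ $\nu\in\Gamma(E)$ and differentiate $\xi(\nu)=\laa\nu\cdot\varphi,\varphi\raa$ in the direction $X.$ Expanding by the Leibniz rule and substituting the Killing-type equation (\ref{first killing equation}) (which, by Lemma \ref{lema_princ}, our spinor automatically satisfies), the resulting $\R^{2,2}$-valued derivative splits into two parts: a tangent part which, using (\ref{prop_prod_escal_vector}) and the orthogonality identity (\ref{pty proof principal lemma}), reproduces the shape operator of $F(M)$ with respect to $\Phi_E(\nu)$ and equals $-\xi(S_\nu X);$ and a normal part which yields exactly $\xi(\nabla^E_X\nu).$ This proves that $\Phi_E$ intertwines the normal connection with $\nabla^E.$ An analogous computation starting from $\xi(Y)$ for $Y\in\Gamma(TM)$ identifies the second fundamental form of $F(M)$ with $B$ via $\Phi_E.$

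The main obstacle is the bookkeeping in this last step: decomposing $d\xi(\nu)(X)$ into its tangent and normal components amounts to re-running, in a slightly different configuration, the algebraic manipulations of Lemma \ref{lema_princ} — in particular the orthogonality (\ref{pty proof principal lemma}) and the symmetry/antisymmetry properties (\ref{prop_prod_escal_vector}) of the $\mathbb{H}_1$-valued pairing. All the required identities are already established, so the remaining work is systematic verification rather than new input, which explains why the paper invokes the parallel computation from \cite[Theorem 3]{bayard1}.
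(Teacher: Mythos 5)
Your proposal is correct and follows essentially the same approach as the paper, which invokes the proof of Theorem 3 of \cite{bayard1} in lieu of writing out the argument. The identification of $\xi(X)=-\overline{[\varphi]}\,[X]\,\widehat{[\varphi]}$ with $-\Phi([\varphi])^{-1}([X])$, making $\xi$ a pointwise linear isometry of $E_m\oplus T_mM\simeq\R^{2,2}$ because $H(\varphi,\varphi)=1$ forces $[\varphi]\in Spin(2,2)$, is precisely the mechanism behind the omitted proof, and the remaining verification of connections and second fundamental forms proceeds as you sketch.
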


\begin{remar}
If $M$ is a Lorentzian surface in $\R^{2,2},$ the immersion may be obtained from the constant spinor fields $\sigma\e$ or $-\sigma\e\in\mathbb{H}_0$ restricted to the surface: indeed, for one of these spinor fields $\varphi,$ and for all $X\in TM\subset M\times\R^{2,2},$ we have 
\begin{equation*}
\xi(X)=\laa X\cdot\varphi,\varphi\raa=-\overline{[\varphi]}[X]\widehat{[\varphi]}=[X],
\end{equation*}
where here the brackets $[X]\in\mathbb{H}_1$ and $[\varphi]=\pm\sigma\e\in\mathbb{H}_0$ represent $X$ and $\varphi$ in one of the two spinorial frames of $\R^{2,2}$ which are above the canonical basis (recall (\ref{prod_escal_vector}) and Section \ref{section notation}). Identifying $[X]\in\R^{2,2}\subset\mathbb{H}_1$ to $X\in\R^{2,2},$ $F=\int\xi$ identifies to the identity.
\end{remar}
Similarly to the Euclidean and Minkowski cases  (\cite{bayard_lawn_roth} and \cite{bayard1}), we deduce a spinorial proof of the fundamental theorem given in Remark \ref{teo_fund_inmersion}: 
\begin{coro}\label{two steps integration}
We may integrate the Gauss, Ricci and Codazzi equations in two steps:
\begin{enumerate}
\item first solving 
\begin{equation} \label{eqn first step}
\nabla_X\varphi=\eta(X)\cdot\varphi
\end{equation}
where
\begin{equation*}
\eta(X)=-\frac{1}{2}\sum_{j=2}^3\epsilon_je_j\cdot B(X,e_j),
\end{equation*}
(there is a solution $\varphi$ in $\Gamma(\Sigma)$ such that $H(\varphi,\varphi)=1,$ unique up to the natural right-action of $Spin(2,2)$ on $\Gamma(\Sigma)$),

\item then solving 
\begin{equation}\label{eqn second step}
dF=\xi 
\end{equation} 
where $\xi(X)=\laa X\cdot\varphi,\varphi\raa$ (the solution is unique, up to translations in $\R^{2,2}\subset\mathbb{H}_1$).
\end{enumerate}
\end{coro}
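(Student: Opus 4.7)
The plan is to reduce the corollary to two standard integration problems on the simply connected manifold $M,$ using the results already proved in the excerpt.

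\textbf{Step 1 (the Killing equation).} Given a bilinear symmetric $B:TM\times TM\to E$ satisfying Gauss, Codazzi and Ricci, I would view \eqref{eqn first step} as a first-order linear system for $\varphi\in\Gamma(\Sigma)$ and compute its integrability condition. Namely, if one defines the auxiliary connection $\widetilde\nabla_X\varphi:=\nabla_X\varphi-\eta(X)\cdot\varphi,$ then a solution is a $\widetilde\nabla$-parallel section, and existence on the simply connected $M$ is equivalent to the vanishing of the curvature $R^{\widetilde\nabla}.$ A direct computation of $R^{\widetilde\nabla}(X,Y)\varphi$ splits $R^{\nabla}(X,Y)\varphi$ (which, by the Gauss and Ricci identities for the spinorial curvature, encodes the sectional and normal curvatures of $(M,E)$) against the terms $(d^{\nabla}\eta)(X,Y)\cdot\varphi+[\eta(X),\eta(Y)]\cdot\varphi.$ The Codazzi equation cancels the derivative term, while Gauss and Ricci cancel the curvature against the quadratic term $[\eta(X),\eta(Y)];$ this is the same algebraic identity as in \cite[Theorem 2]{bayard1}, so I would simply invoke that computation.

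\textbf{Step 2 (the normalization and uniqueness).} Using the identities \eqref{aplic_H_propiedad}, $\eta(X)\in Cl_1(E\oplus TM)$ satisfies $H(\eta(X)\cdot\psi,\psi')=\widehat{H(\psi,\eta(X)\cdot\psi')},$ and a short calculation shows the real part of $H,$ as well as $H$ itself, is preserved by $\widetilde\nabla.$ Hence if the initial spinor is chosen to satisfy $H(\varphi,\varphi)=1$ at one point, the condition holds globally. For the uniqueness clause, given two solutions $\varphi_1,\varphi_2,$ one checks that the fibrewise $Spin(2,2)$-transition $\varphi_2=\varphi_1\cdot g$ written in a local spinorial frame is $\widetilde\nabla$-parallel, hence $g$ is constant; this is exactly the freedom in the initial condition inside the unit $H$-sphere, which is a homogeneous space under $Spin(2,2).$

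\textbf{Step 3 (the Weierstrass step).} With $\varphi$ in hand, the $1$-form $\xi(X)=\laa X\cdot\varphi,\varphi\raa$ is $\R^{2,2}$-valued and closed by Lemma~\ref{lem xi R22} (in its proof only the Dirac equation $D\varphi=\vec H\cdot\varphi,$ which is a consequence of \eqref{eqn first step}, is used). Since $M$ is simply connected, a primitive $F:M\to\R^{2,2}$ exists and is unique up to a translation in $\R^{2,2}\subset\mathbb{H}_1.$ Then Theorem~\ref{teorema_isometria} guarantees that $F$ is an isometric immersion with normal bundle $E,$ and that its second fundamental form and normal connection coincide with the data $B$ and $\nabla^{E}.$ Combining the two steps shows that the Gauss--Codazzi--Ricci system can indeed be integrated sequentially in the stated way.

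\textbf{Main obstacle.} The only genuinely non-trivial point is verifying in Step 1 that the algebraic combination $R^{\nabla}(X,Y)\varphi-(d^{\nabla}\eta)(X,Y)\cdot\varphi-[\eta(X),\eta(Y)]\cdot\varphi$ vanishes precisely under Gauss, Codazzi and Ricci; once this is done, invariance of $H,$ uniqueness, and the Weierstrass integration are all immediate from the preceding lemmas. Since an entirely parallel argument was carried out in the Minkowski case in \cite{bayard1}, I would not reproduce the computation but rather indicate the signs and $\epsilon_j$-factors inherited from the signature $(2,2)$ setup.
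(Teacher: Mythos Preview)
Your proposal is correct and follows essentially the same route as the paper: the paper's proof is in fact a single paragraph stating that the Gauss, Ricci and Codazzi equations are exactly the integrability conditions for \eqref{eqn first step} (with a reference to \cite{bayard1,bayard_lawn_roth}), then invoking Theorem~\ref{teorema_isometria} for the Weierstrass step and noting that the $Spin(2,2)$-freedom and the translation correspond to rigid motions. Your Steps~1--3 spell this out in more detail, and the ``main obstacle'' you identify is exactly where the paper also defers to \cite{bayard1}.

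One small correction in Step~2: $\eta(X)=-\tfrac12\sum_j\epsilon_j e_j\cdot B(X,e_j)$ lies in $Cl_0(E\oplus TM)$, not $Cl_1$, so the identity from \eqref{aplic_H_propiedad} does not apply directly. The right argument is to apply \eqref{aplic_H_propiedad} twice to each summand $e_j\cdot\nu$ (tangent times normal, hence anticommuting), which yields $H(e_j\cdot\nu\cdot\psi,\psi)=-H(e_j\cdot\nu\cdot\psi,\psi)=0$; this gives the preservation of $H$ along $\widetilde\nabla$ that you need. The conclusion you draw is correct, only the intermediate justification needs this adjustment.
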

The Gauss, Ricci and Codazzi equations are in fact exactly the integrability conditions for (\ref{eqn first step}) (see  \cite{bayard1,bayard_lawn_roth} for details); Theorem \ref{teorema_isometria} then shows that the solution $F$ of (\ref{eqn second step}) is an immersion preserving fundamental forms and connections. We finally note that the multiplication on the right by a constant belonging to $Spin(2, 2)$ in the first step, and the addition of a constant belonging to $\R^{2,2}$ in the second step, correspond to a rigid motion in $\R^{2,2}.$ 

\subsection{Lorentzian surfaces in $\R^{1,2}$ and $\R^{2,1}$}
The aim of this section is to deduce spinor characterisations for immersions of Lorentzian surfaces in $\R^{1,2}$ and $\R^{2,1};$ we obtain characterisations which are different to the characterisations given by M.-A. Lawn \cite{lawn_thesis,lawn} and by M.-A. Lawn and J. Roth \cite{lawn_roth}. Keeping the notation of Section \ref{prelim}, we consider the map $\beta:\mathbb{H}_0 \longrightarrow\mathbb{H}_0$ given by $\beta(\xi)=i\sigma\xi I.$ This map is $\A$-linear and satisfies 
$$\beta^2=id_{\mathbb{H}_0}\hspace{.5cm}\mbox{and}\hspace{.5cm}\beta(\xi J)=-\beta(\xi)J$$ 
for all $\xi\in\mathbb{H}_0;$ $\beta$ is thus a real structure on $\mathbb{H}_0.$ We note that $\beta$ is $Spin(2,2)$-equivariant, and thus induces a real structure $\beta:\Sigma\to\Sigma$ on the spinor bundle: it satisfies 
$$\beta^2=id_{\Sigma}\hspace{.5cm}\mbox{and}\hspace{.5cm} \beta(i\varphi)=-i\beta(\varphi)$$
for all $\varphi$ belonging to $\Sigma$ (here $i$ stands for the natural complex structure on $\Sigma;$ in coordinates, this is the right-multiplication by $J,$ see Section \ref{prelim}). Moreover, $\beta$ is anti-linear with respect to the Clifford action of $E\oplus TM$: for all $X\in E\oplus TM$ and $\varphi\in\Sigma,$
\begin{equation*}
\beta(X\cdot\varphi)=-X\cdot\beta(\varphi).
\end{equation*}
Finally, for all $\varphi=\varphi^++\varphi^-\ \in\ \Sigma=\Sigma^+\oplus\Sigma^-$ and all $X\in TM,$ we have
\begin{equation}\label{estruc_real_1}
\beta(\varphi^{\pm})=\beta(\varphi)^{\pm}, \ H(\beta(\varphi),\beta(\varphi))=-H(\varphi,\varphi)\hspace{.3cm}\mbox{and}\hspace{.3cm} \nabla_X\beta(\varphi)=\beta(\nabla_X\varphi).
\end{equation}  

Throughout the section, we suppose that the bundle $E$ is flat, i.e. is of the form $E=\R e_0\oplus \R e_1$ where $e_0$ and $e_1$ are unit, orthogonal and parallel sections of $E$ such that $\la e_0,e_0\ra=-1$ and $\la e_1,e_1\ra=1;$ we moreover assume that $e_0$ is future-directed, and that $(e_0,e_1)$ is positively oriented. We consider the isometric embeddings of $\R^{1,2}$ and $\R^{2,1}$ in $\R^{2,2}\subset \mathbb{H}_1$ given by 
\begin{equation*}
\R^{1,2}=(\sigma i\e)^{\perp}\hspace{0.2in}\text{and}\hspace{0.2in}\R^{2,1}=(I)^{\perp}, 
\end{equation*} 
where $\sigma i\e$ and $I$ are the first two vectors of the canonical basis of $\R^{2,2}\subset \mathbb{H}_1.$ We note that the signatures of $\R^{1,2}$ and $\R^{2,1}$ are $(+,-,+)$ and $(-,-,+)$ respectively. Let $\vec{H}$ be a section of $E$ and $\varphi\in\Gamma(\Sigma)$ be a solution of (\ref{eqn dirac th}). According to Section \ref{weierstrass_repre}, the spinor field $\varphi$ defines an isometric immersion $F:M\rightarrow\R^{2,2}$ (unique, up to translations), with normal bundle $E$ and mean curvature vector $\vec{H}.$ We give a characterisation of the isometric immersions in $\R^{1,2}$ and $\R^{2,1}$ (up to translations) in terms of $\varphi:$ 

\begin{prop}\label{carac_inm_isom}
1- Assume that \begin{equation}\label{inm_iso_R12}
\vec{H}=He_1\hspace{0.3in}\text{and}\hspace{0.3in} e_0\cdot\varphi=\varphi. 
\end{equation}
Then the isometric immersion $F:M\rightarrow\R^{2,2}$ belongs to $\R^{1,2}.$
\\
\\2- Assume that \begin{equation}\label{inm_iso_R21}
\vec{H}=He_0\hspace{0.3in}\text{and}\hspace{0.3in} e_1\cdot\varphi=-\beta(\varphi). 
\end{equation}
Then the isometric immersion $F:M\rightarrow\R^{2,2}$ belongs to $\R^{2,1}.$

Reciprocally, if $F:M\rightarrow\R^{2,2}$ belongs to $\R^{1,2}$ (resp. to $\R^{2,1}$), then  the normal bundle $E$ is flat and (\ref{inm_iso_R12}) (resp. (\ref{inm_iso_R21})) holds for some unit, orthogonal and parallel sections $(e_0,e_1)$ of $E.$
\end{prop}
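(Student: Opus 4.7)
The plan is to work in the spinorial-frame coordinates of Section \ref{section notation}: fix a local lift of $(e_0,e_1,e_2,e_3)$ to $\tilde Q$, write $[\varphi]\in\mathbb{H}_0$, and use the explicit formula
\begin{equation*}
\xi(X) \;=\; -\,\overline{[\varphi]}\,[X]\,\widehat{[\varphi]}\;\in\;\mathbb{H}_1 \qquad (X\in TM)
\end{equation*}
derived in the remark following Theorem \ref{teorema_isometria}. Under the identification (\ref{iden_espa}), $F(M)\subset\R^{1,2}=(\sigma i\e)^\perp$ amounts to the $\sigma i\e$-coefficient of $\xi(X)$ vanishing for all $X\in TM$, and similarly $F(M)\subset\R^{2,1}=I^\perp$ amounts to the $I$-coefficient vanishing. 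For each forward implication one translates the spinor hypothesis into an algebraic constraint on $[\varphi]$ via the identities $[Y\cdot\varphi]=\sigma i\,[Y]\,\widehat{[\varphi]}$ and $[\beta(\varphi)] = i\sigma[\varphi]I$, and then reads off the relevant coefficient of $\xi(X)$.

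For Part 1, $[e_0\cdot\varphi] = \sigma i\,(\sigma i\e)\,\widehat{[\varphi]} = -\widehat{[\varphi]}$, so $e_0\cdot\varphi=\varphi$ is equivalent to $\widehat{[\varphi]}=-[\varphi]$, i.e.\ every $\A$-coefficient of $[\varphi]$ is $\sigma$-pure; hence $[\varphi]=\sigma q$ for some $q\in\mathbb{H}_0$ with real coefficients. Substituting yields $\xi(X) = -(\sigma\overline q)[X](-\sigma q) = \overline q\,[X]\,q$, and since $[X]=i\alpha J+\beta K$ for $X\in TM$ also involves no $\sigma$, neither does $\xi(X)$; in particular the $\sigma i\e$-coefficient vanishes, so $\xi$ takes values in $\R^{1,2}$, and after fixing the integration constant $F(M)\subset\R^{1,2}$. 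Part 2 is parallel in spirit: $e_1\cdot\varphi = -\beta(\varphi)$ unwinds to $\widehat{[\varphi]} = I[\varphi]I$, which forces the $\e$- and $I$-coefficients of $[\varphi]$ to be $\sigma$-pure and the $J$- and $K$-coefficients to be real; using the anticommutation $[X]I=-I[X]$ for $X\in TM$ to rewrite $\xi(X)=\overline{[\varphi]}\,I[X]\,[\varphi]\,I$, a coefficient-by-coefficient expansion shows that the contributions to the $I$-component all cancel, placing $F(M)$ in $\R^{2,1}$.

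For the converse, suppose $F(M)\subset\R^{1,2}$. The constant vector $\sigma i\e\in\R^{2,2}$ is then a parallel unit timelike normal section along $M$; call it $e_0$, and let $e_1$ be a unit section of its orthogonal complement in $E$. The latter is a rank-$1$ subbundle preserved by $\nabla^E$ carrying a positive definite metric, so it admits a parallel trivialization; consequently $(e_0,e_1)$ is a parallel orthonormal frame and $E$ is flat. Since $\R^{1,2}$ is totally geodesic in $\R^{2,2}$, the second fundamental form of $M$ takes values in $\R e_1$, whence $\vec H = He_1$. By the remark after Theorem \ref{teorema_isometria} the immersion $F$ is represented by the restriction to $M$ of the constant spinor $\varphi_0=\sigma\e$, and one checks directly that $e_0\cdot\varphi_0 = -\widehat{\sigma\e} = \sigma\e = \varphi_0$, which is (\ref{inm_iso_R12}). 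The case $F(M)\subset\R^{2,1}$ is symmetric, with $e_1:=I$ the constant normal and the verification $e_1\cdot\varphi_0 = -iI = -\beta(\varphi_0)$. The main obstacle is the bookkeeping in Part 2: translating $e_1\cdot\varphi = -\beta(\varphi)$ into the correct $\sigma$-parity conditions on the $\A$-coefficients of $[\varphi]$ and verifying the cancellation of the $I$-component of $\xi(X)$ requires a careful interplay between $\beta$, the involution $\widehat{\;\cdot\;}$, and the Clifford anticommutations with $I$.
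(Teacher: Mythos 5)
Your proposal is correct in substance, but it takes a more laborious route than the paper for the forward implications, and it leaves precisely the laborious part unverified. You show that $\xi(X)$ has vanishing $\sigma i\e$-component (resp. $I$-component) for all \emph{tangent} $X$, which forces you to analyse the $\sigma$-parity of the $\AC$-coefficients of $[\varphi]$ and to expand the quaternionic triple product $\overline{[\varphi]}\,[X]\,\widehat{[\varphi]}$ coefficient by coefficient. The paper instead uses the normal-bundle extension of $\xi$ provided by Theorem \ref{teorema_isometria}(2): it computes $\xi(e_0)=\laa e_0\cdot\varphi,\varphi\raa$ (resp.\ $\xi(e_1)$) directly, and under the hypothesis $e_0\cdot\varphi=\varphi$ (resp.\ $e_1\cdot\varphi=-\beta(\varphi)$) together with the scalar identity $\overline{[\varphi]}\,[\varphi]=H(\varphi,\varphi)=1$, this collapses in one line to the constant $\sigma i\e$ (resp.\ $I$). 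Since that is a constant normal vector of the immersion, $F(M)$ lies in the orthogonal hyperplane, with no coefficient bookkeeping at all. Concerning the step you flag as the ``main obstacle'': with $[\varphi]=\sigma a_0\e+i\sigma a_1 I+a_2 J+ia_3 K$, $a_j\in\R$, and $[X]=ix_2J+x_3K$, the $I$-coefficient of $-\overline{[\varphi]}\,[X]\,\widehat{[\varphi]}$ is $\sigma$ times
\begin{equation*}
-a_0a_3x_2-a_0a_2x_3-a_1a_2x_2-a_1a_3x_3+a_1a_2x_2+a_0a_2x_3+a_0a_3x_2+a_1a_3x_3 \;=\;0,
\end{equation*}
so the cancellation you assert does hold — but this is exactly the computation the paper's normal-vector argument avoids. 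Your converse is sound; the paper states it more compactly by choosing $(e_0,e_1)$ with $\laa e_0\cdot\varphi,\varphi\raa=\sigma i\e$ (resp.\ $\laa e_1\cdot\varphi,\varphi\raa=I$), and combining this identity with $\overline{[\varphi]}[\varphi]=1$ to read off $\widehat{[\varphi]}=-[\varphi]$ (resp.\ the corresponding constraint), rather than invoking the explicit constant spinor $\sigma\e$.
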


\begin{proof}
1- Assuming that (\ref{inm_iso_R12}) holds, we compute  $\la\la e_0\cdot\varphi,\varphi\ra\ra=\sigma i\e.$ Thus, the constant vector $\sigma i\e\in\R^{2,2}\subset\mathbb{H}_1$ is normal to the immersion (by Theorem \ref{teorema_isometria}, (2), since this is $\xi(e_0)$), and the immersion thus belongs to $\R^{1,2}.$
\\
\\2- Analogously, assuming that (\ref{inm_iso_R21}) holds, we have \[\la\la e_1\cdot\varphi,\varphi\ra\ra=-\la\la\beta(\varphi),\varphi\ra\ra=-\sigma i\overline{[\varphi]}[\beta(\varphi)] =-\sigma i\overline{[\varphi]}[\varphi]\sigma iI=I\] where $[\varphi]\in\mathbb{H}_0$ represents the spinor field $\varphi$ in some frame $\tilde{s}\in\tilde{Q}.$ The constant vector $I\in\R^{2,2}\subset\mathbb{H}_1$ is thus normal to the immersion, and the result follows.  
\\
\\For the converse statements, we choose $(e_0,e_1)$ such that $\la\la e_0\cdot\varphi,\varphi\ra\ra=\sigma i\e$ in the first case and such that $\la\la e_1\cdot\varphi,\varphi\ra\ra=I$ in the second case. Writing these identities in some frame $\tilde{s},$ we easily deduce (\ref{inm_iso_R12}) and (\ref{inm_iso_R21}).
\end{proof}

Let $\mathcal{H}\subset \R^{2,2}$ be the hyperplane $\R^{1+r,2-r}$ with $r=0,1$ (that is $\mathcal{H}$ is $\R^{1,2}$ if $r=0$ and is $\R^{2,1}$ if $r=1$). If we assume that $M\subset\mathcal{H}\subset \R^{2,2},$ and if we consider $e_0$ and $e_1$ timelike and spacelike unit vector fields such that 
\begin{equation*}
\R^{2,2}=\R e_r\oplus_{\perp} T\mathcal{H}\hspace{0.3in} \text{and}\hspace{0.3in} T\mathcal{H}=\R e_{1-r}\oplus_{\perp} TM,
\end{equation*}
then the intrinsic spinors of $M$ identify with the spinors of $\mathcal{H}$ restricted to $M,$ which in turn identify with the positive spinors of $\R^{2,2}$ restricted to $M:$ this identification is the content of Propositions \ref{iso_haces_1} and \ref{iso_haces_2} below, which, together with the previous results, will give the representation of surfaces in $\R^{1,2}$ and $\R^{2,1}$ by means of spinors of $\Sigma M$ only.

\subsubsection{Lorentzian surfaces in $\R^{1,2}$}\label{superf_R12}
We first deduce from Proposition \ref{carac_inm_isom} \emph{1-} a spinor representation for Lorentzian surfaces in $\R^{1,2}.$ We can define a scalar product on $\C^2$ by setting
\begin{equation*}
\left\la\begin{pmatrix}
a+ib \\ c+id
\end{pmatrix},\begin{pmatrix}
a'+ib' \\ c'+id'
\end{pmatrix}\right\ra:=\frac{ad'+a'd-bc'-b'c}{2};
\end{equation*} 
it is of signature $(2,2).$ This scalar product is $Spin(1,1)$-invariant (the action of $\pm e^u\in Spin(1,1)$ is the multiplication by $\pm e^u$ on the first and by $\pm e^{-u}$ on the second component of the spinors) and thus induces a scalar product $\la.,.\ra$ on the spinor bundle $\Sigma M.$ It satisfies the following properties: for all $\psi,\psi'\in \Sigma M$ and all $X\in TM,$
\begin{equation}
\la\psi,\psi'\ra=\la\psi',\psi\ra\hspace{0.2in}\text{and}\hspace{0.2in}\la X\cdot\psi,\psi'\ra=-\la\psi,X\cdot\psi'\ra.
\end{equation}
This is the scalar product on $\Sigma M$ that we use in this section (and in this section only). We moreover define $|\psi|^2:=\la\psi,\psi\ra.$ The following proposition is analogous to \cite[Proposition 6.1]{bayard_lawn_roth} (see also \cite[Proposition 2.1]{morel}, and the references there), and is proved in \cite{VP}:
\begin{prop}\label{iso_haces_1}
There is an identification \begin{align*}
\Sigma M & \overset{\sim}{\longmapsto}  \Sigma^+_{|M}\\
\psi & \longmapsto  \psi^*,
\end{align*} 
$\C-$linear, and such that, for all $X\in TM$ and all $\psi\in\Sigma M,$ $(\nabla_X\psi)^*=\nabla_X\psi^*,$ the Clifford actions are linked by $(X\cdot\psi)^*=X\cdot e_1\cdot\psi^*$ and 
\begin{equation}\label{norma_R12}
H(\psi^*,\psi^*)=\frac{1+\sigma}{2}|\psi|^2. 
\end{equation}
\end{prop}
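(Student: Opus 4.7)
The plan is to exploit the flatness of the normal bundle $E$ to trivialize $\Sigma E$ globally, and then define $\psi^*$ component-by-component with respect to the chirality splittings of $\Sigma E$ and $\Sigma M$. Since $e_0$ and $e_1$ are parallel unit sections of $E$, the spin connection $\nabla^{\Sigma E}$ is flat and we may choose a parallel unit section $\alpha_0\in\Gamma(\Sigma^+E)$; set $\alpha_1:=-e_1\cdot_E\alpha_0\in\Gamma(\Sigma^-E)$, also parallel (so in particular $e_1\cdot_E\alpha_1=\alpha_0$). These two sections trivialize $\Sigma E$, and since $\Sigma^+_{|M}=\Sigma^{++}\oplus\Sigma^{--}=(\alpha_0\otimes\Sigma^+M)\oplus(\alpha_1\otimes\Sigma^-M)$, the natural candidate for the identification is
\[
\psi^*:=\alpha_0\otimes\psi^+ + \alpha_1\otimes\psi^-\ \in\ \Sigma^+_{|M},\qquad \psi=\psi^++\psi^-\in\Sigma M,
\]
which is manifestly $\C$-linear and bijective.

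Because $\alpha_0$ and $\alpha_1$ are $\nabla^{\Sigma E}$-parallel, the twisted connection $\nabla=\nabla^{\Sigma E}\otimes id + id\otimes\nabla^{\Sigma M}$ reduces to $id\otimes\nabla^{\Sigma M}$ on the two trivialized subbundles, and $(\nabla_X\psi)^*=\nabla_X\psi^*$ follows immediately. The Clifford intertwining $(X\cdot\psi)^*=X\cdot e_1\cdot\psi^*$ is the main delicate step: applying the tensor-product rule $X\cdot(\alpha\otimes\beta)=\overline{\alpha}\otimes(X\cdot_M\beta)$ for $X\in TM$, together with $\overline{\alpha_0}=\alpha_0$ and $\overline{\alpha_1}=-\alpha_1$, gives $X\cdot\psi^*=\alpha_0\otimes(X\cdot\psi^+)-\alpha_1\otimes(X\cdot\psi^-)$, which lies in $\Sigma^-_{|M}$. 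Left-multiplying by $e_1$ acts only on the $\Sigma E$-factor, swaps $\alpha_0$ and $\alpha_1$ with the right signs (here $e_1^2=-1$ is used), and simultaneously produces the chirality swap $\psi^\pm\leftrightarrow\psi^\mp$ of $\Sigma M$ required to match $(X\cdot\psi)^*=\alpha_0\otimes(X\cdot\psi^-)+\alpha_1\otimes(X\cdot\psi^+)$. This sign bookkeeping is exactly what dictates the choice $\alpha_1=-e_1\cdot_E\alpha_0$.

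For the norm identity $H(\psi^*,\psi^*)=\frac{1+\sigma}{2}|\psi|^2$, I would use $\A$-bilinearity of $H$ together with the fact that $\sigma=e_0\cdot e_1\cdot e_2\cdot e_3$ acts as $+id$ on $\Sigma^+$; this forces $H(\psi^*,\psi^*)\in\frac{1+\sigma}{2}\A\simeq\R$. Expanding the expression via the tensor product, the mixed terms of the form $H(\alpha_0\otimes\psi^+,\alpha_1\otimes\psi^-)$ vanish because the two $\Sigma E$-factors belong to different chirality subspaces (equivalently, they lie in the $\frac{1-\sigma}{2}$-part of $\A$, which must vanish in $\Sigma^+$), and the surviving diagonal contributions reassemble, after comparison with the explicit $\C^2$-formula defining $\langle\cdot,\cdot\rangle$ on $\Sigma M$, into the real scalar $|\psi|^2$. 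The main obstacle in practice is the precise alignment of sign conventions (both in the normalization of $\alpha_0$ and in the definition of $\alpha_1$), which is forced on us by the Clifford computation in the previous paragraph and is what makes the extra factor of $e_1$ appear on the right-hand side of the intertwining identity.
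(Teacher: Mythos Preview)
The paper does not actually supply a proof of this proposition; it only states that the result ``is analogous to \cite[Proposition 6.1]{bayard_lawn_roth}'' and ``is proved in \cite{VP}'' (the second author's thesis). So there is no paper argument to compare against, and your task is really to produce a self-contained proof.

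Your construction of the map $\psi\mapsto\psi^*=\alpha_0\otimes\psi^++\alpha_1\otimes\psi^-$ is the natural one, and your verification of the connection compatibility and of the Clifford intertwining $(X\cdot\psi)^*=X\cdot e_1\cdot\psi^*$ is correct; the sign bookkeeping with $\alpha_1=-e_1\cdot_E\alpha_0$ and $\overline{\alpha_0}=\alpha_0$, $\overline{\alpha_1}=-\alpha_1$ works out exactly as you describe.

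The norm argument, however, is backwards. You claim that the mixed terms $H(\alpha_0\otimes\psi^+,\alpha_1\otimes\psi^-)$ vanish and that the diagonal terms $H(\alpha_0\otimes\psi^+,\alpha_0\otimes\psi^+)$, $H(\alpha_1\otimes\psi^-,\alpha_1\otimes\psi^-)$ survive and reassemble into $|\psi|^2$. In fact the opposite occurs: each $\Sigma^{\pm\pm}$ is \emph{totally isotropic} for $H$. For instance, an element of $\Sigma^{++}=(1+\sigma)(\e+iI)(\R\oplus\R J)$ written as $p=p_0\e+ip_1I+p_2J+ip_3K$ has $p_0=p_1$ and $p_2=p_3$, so $H(p,p)=p_0^2-p_1^2+p_2^2-p_3^2=0$; the same holds for $\Sigma^{--}$. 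Thus it is precisely the cross term $2H(\alpha_0\otimes\psi^+,\alpha_1\otimes\psi^-)$ that carries $|\psi|^2$. This is consistent with the scalar product on $\Sigma M$ defined just before the proposition,
\[
\left\la\begin{pmatrix}a+ib\\ c+id\end{pmatrix},\begin{pmatrix}a+ib\\ c+id\end{pmatrix}\right\ra=ad-bc,
\]
which is itself a pairing between the $\Sigma^+M$ and $\Sigma^-M$ components. Your parenthetical justification (``they lie in the $\frac{1-\sigma}{2}$-part of $\A$'') is also off: since $\sigma$ acts as $+id$ on all of $\Sigma^+$, \emph{every} term of $H(\psi^*,\psi^*)$ lies in $\frac{1+\sigma}{2}\A$, mixed or not. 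The fix is simply to redo this last paragraph computing the cross pairing explicitly in the $\mathbb{H}_0$-model (with suitable normalisations of $\alpha_0,\alpha_1$) and matching it to the $\C^2$-formula for $|\psi|^2$.
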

Using this identification, the intrinsic Dirac operator on $M,$ defined by 
\begin{equation*}
D\psi:=-e_2\cdot\nabla_{e_2}\psi+e_3\cdot\nabla_{e_3}\psi,
\end{equation*}
is linked to the Dirac operator on $\Sigma$ by 
\begin{equation*}
(D\psi)^*=-e_1\cdot D\psi^*.
\end{equation*} 
We suppose that $\varphi$ is a solution of (\ref{eqn dirac th}) such that (\ref{inm_iso_R12}) holds (the immersion belongs to $\R^{1,2}$), and we choose $\psi\in \Sigma M$ such that $\psi^*=\varphi^+;$ in view of (\ref{norma_R12}), it is such that $|\psi|^2=1$ since $H(\varphi^+,\varphi^+)=\frac{1+\sigma}{2}$ if $H(\varphi,\varphi)=1$ (this last claim relies on (\ref{inm_iso_R12}) together with (\ref{exp_expli_pos})-(\ref{exp_expli_neg})); moreover, it satisfies 
\begin{equation*}
(D\psi)^*=-e_1\cdot D\psi^*= -e_1\cdot \vec{H}\cdot\psi^*=-e_1\cdot He_1\cdot\psi^*=H\psi^*;
\end{equation*} 
$\psi$ is thus a solution of
\begin{equation}\label{ecua_diracM_1}
D\psi=H\psi\hspace{0.2in}\text{with}\hspace{0.2in}|\psi|^2=1. 
\end{equation} 
Reciprocally, if $\psi\in\Sigma M$ is a solution of (\ref{ecua_diracM_1}), we can define $\varphi^+:=\psi^*$ and $\varphi^-:=e_0\cdot\psi^*,$ and get $\varphi:=\varphi^++\varphi^-\in\Sigma,$ a solution of (\ref{eqn dirac th}) with $\vec{H}=He_1$ (we recall that $e_0$ and $e_1$ are parallel sections of $E$ such that $\langle e_0,e_0\rangle=-1$ and $\langle e_1,e_1\rangle=1$); since $e_0\cdot\varphi=\varphi$ we obtain an isometric immersion of $M$ in $\R^{1,2}$ (Proposition \ref{carac_inm_isom} \emph{1-}). A solution of (\ref{ecua_diracM_1}) is thus equivalent to an isometric immersion in $\R^{1,2}.$ We thus obtain a spinorial characterisation of an isometric immersion of a Lorentzian surface in $\R^{1,2},$ which is simpler than the characterisation obtained in \cite{lawn}, where two spinor fields are involved.

\begin{remar}
We also obtain an explicit representation formula: for all $\psi\in\Sigma M,$ we denote by $\alpha(\psi)$ the spinor field whose coordinates in a given spinorial frame are the complex conjugates of the coordinates of $\psi$ in this frame, and by $\overline{\psi}:=\psi^+-\psi^-,$ the usual conjugation in $\Sigma M.$ If we suppose that $\psi\in\Sigma M$ is a solution of (\ref{ecua_diracM_1}), setting $\chi:=\overline{\psi}$ we can show that 
\begin{equation}\label{base_sigmaM1}
\chi,\ \alpha(\chi),\ i\chi,\ i\alpha(\chi) 
\end{equation} 
is $\la.,.\ra$-orthonormal with signature $(-,+,-,+),$ and in particular is a real basis of $\Sigma M$ ($i$ is the natural complex structure of $\Sigma M,$ which is such that the Clifford action is $\C-$linear). For all $X\in TM$ and $\varphi=\psi^*+e_0\cdot\psi^*,$ where $\psi\in\Sigma M$ satisfies (\ref{ecua_diracM_1}), a computation yields \begin{align*}
\xi(X)& =\la\la X\cdot\varphi,\varphi\ra\ra\\ 
&=-\la X\cdot\psi,\alpha(\chi)\ra\ I+\la X\cdot\psi,i\chi\ra\ (iJ)-\la X\cdot\psi,i\alpha(\chi)\ra\ K.
\end{align*}We note that $\la X\cdot\psi,\chi\ra=0,$ and thus that $\xi(X)$ may be interpreted as the coordinates of $X\cdot\psi$ in the orthonormal basis (\ref{base_sigmaM1}). The formula $F=\int \xi$ represents the immersion. For sake of brevity we don't include the proof, and refer to \cite{VP} for details.
\end{remar} 

\subsubsection{Lorentzian surfaces in $\R^{2,1}$}
In this section we deduce from Proposition \ref{carac_inm_isom} \emph{2-} a spinor representation for Lorentzian surfaces in $\R^{2,1}.$ We consider here the following scalar product on $\Sigma M,$ given in coordinates by
\begin{equation*}
\left\la \begin{pmatrix}
a+ib \\ c+id
\end{pmatrix},\begin{pmatrix}
a'+ib' \\ c'+id'
\end{pmatrix}\right\ra:=-\frac{ac'+a'c+bd'+b'd}{2};
\end{equation*} 
it is of signature $(2,2).$ Moreover, for all $\psi,\psi'\in \Sigma M$ and all $X\in TM$ we have
\begin{equation}\la\psi,\psi'\ra=\la\psi',\psi\ra\hspace{0.2in}\text{and}\hspace{0.2in}
\la X\cdot\psi,\psi'\ra=\la\psi,X\cdot\psi'\ra. 
\end{equation}
We moreover write $|\psi|^2:=\la\psi,\psi\ra$ and still denote by $i$ the natural complex structures on $\Sigma$ and on $\Sigma M.$
\begin{prop}\label{iso_haces_2}
There is an identification \begin{align*}
\Sigma M & \overset{\sim}{\longmapsto}  \Sigma^+_{|M}\\
\psi & \longmapsto  \psi^*,
\end{align*} 
$\C-$linear, and such that, for all $X\in TM$ and all $\psi\in\Sigma M,$ $(\nabla_X\psi)^*=\nabla_X\psi^*,$ the Clifford actions are linked by $(X\cdot\psi)^*=ie_0\cdot X\cdot\psi^*$ and 
\begin{equation}\label{norma_R21}
H(\psi^*,\psi^*)=-\frac{1+\sigma}{2}|\psi|^2. 
\end{equation}
\end{prop}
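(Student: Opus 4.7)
The proof follows the same scheme as the analogous Proposition \ref{iso_haces_1}. Since the normal bundle $E$ is flat and trivialized by the parallel orthonormal sections $e_0,e_1$, the spinor bundle $\Sigma E=\Sigma^+E\oplus\Sigma^-E$ inherits a canonical parallel trivialization by two sections $\alpha_+\in\Gamma(\Sigma^+E)$ and $\alpha_-\in\Gamma(\Sigma^-E)$, which I normalize so that $e_1\cdot_E\alpha_\pm$ and $e_0\cdot_E\alpha_\pm$ take prescribed values (recall that $e_0\cdot_E$ and $e_1\cdot_E$ swap $\Sigma^+E$ and $\Sigma^-E$). The identification $\psi\mapsto\psi^*$ will send $\psi=\psi^++\psi^-\in\Sigma^+M\oplus\Sigma^-M$ into $\Sigma^{++}\oplus\Sigma^{--}=\Sigma^+_{|M}$ by an explicit formula of the form $\psi^*=\alpha_+\otimes\psi^++c\,\alpha_-\otimes\psi^-$, with a constant $c\in\{\pm 1,\pm i\}$ to be chosen so that the requirements of the proposition come out with the right signs; $\C$-linearity is then clear from the definition.

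Next I would verify the Clifford compatibility $(X\cdot\psi)^*=ie_0\cdot X\cdot\psi^*$ for all $X\in TM$. Using the two-slot definition of the Clifford action on $\Sigma=\Sigma E\otimes\Sigma M$ — namely $X\cdot(\alpha\otimes\beta)=\overline\alpha\otimes(X\cdot_M\beta)$ for $X\in TM$ and $e_0\cdot(\alpha\otimes\beta)=(e_0\cdot_E\alpha)\otimes\beta$ for $e_0\in E$ — the verification reduces to a finite linear-algebra identity on the two factors $\alpha_+$ and $\alpha_-$: one has to check that the composition of the involution $\alpha\mapsto\overline\alpha$ with $ie_0\cdot_E$ realizes precisely the swap between the $\Sigma^{++}$ and $\Sigma^{--}$ parts that corresponds to $X\cdot\psi^\pm\in\Sigma^\mp M$. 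Compatibility with the connections, $\nabla_X\psi^*=(\nabla_X\psi)^*$, then follows at once from the Leibniz rule and the fact that $\alpha_\pm$ are parallel (because $E$ is flat and the spin structure is induced from the parallel frame).

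Finally, for the norm identity (\ref{norma_R21}), one computes $H(\psi^*,\psi^*)$ using the $\mathcal A$-bilinearity of $H$ and its values on the tensor products $\Sigma^+E\otimes\Sigma^+M$ and $\Sigma^-E\otimes\Sigma^-M$ (which follow from the behaviour of $H$ under the splitting (\ref{desc_1_rep})). Writing $\psi$ in coordinates as in the statement, this expresses $H(\psi^*,\psi^*)$ as a multiple of $\frac{1+\sigma}{2}|\psi|^2$, where $|\psi|^2$ is the scalar product of this section; the sign $-\frac{1+\sigma}{2}$, opposed to the $+\frac{1+\sigma}{2}$ of Proposition \ref{iso_haces_1}, comes from the fact that the transverse vector defining $\R^{2,1}\subset\R^{2,2}$ is $I$ (with $\langle I,I\rangle=+1$ inherited from $e_1$) instead of $\sigma i\e$ (with $\langle\sigma i\e,\sigma i\e\rangle=-1$), and is accommodated by choosing the constant $c$ above accordingly.

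The main obstacle is the bookkeeping of normalizations: the constant $c$ and the scaling of $\alpha_\pm$ must be fixed simultaneously so that the Clifford formula involving the factor $ie_0$, the compatibility with the symmetric (rather than antisymmetric, as in Proposition \ref{iso_haces_1}) Clifford pairing on $\Sigma M$ used in this section, and the prescribed sign in (\ref{norma_R21}) all hold together. Once one correct choice is made, every claim in the proposition is checked by a direct computation in the local parallel frame $(\alpha_+,\alpha_-)$ and the details, being routine, are the reason the authors refer to \cite{VP}.
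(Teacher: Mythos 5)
The paper gives no proof of this proposition at all: it says only ``The detailed proof is given in \cite{VP}.'' So there is no in-paper argument to compare yours against, and the best one can do is assess your sketch on its own terms. Your scheme — trivialise $\Sigma E$ by a parallel frame $\alpha_+\in\Gamma(\Sigma^+E)$, $\alpha_-\in\Gamma(\Sigma^-E)$ (legitimate, since the section assumes $E$ flat with the parallel orthonormal frame $(e_0,e_1)$), set $\psi^*=\alpha_+\otimes\psi^++c\,\alpha_-\otimes\psi^-$ landing in $\Sigma^{++}\oplus\Sigma^{--}=\Sigma^+$, then check $\C$-linearity, Clifford compatibility, parallelism, and the $H$-norm — is indeed the natural construction, and it matches the structure of the analogous Proposition \ref{iso_haces_1} (itself patterned on \cite[Prop.~6.1]{bayard_lawn_roth} and \cite[Prop.~2.1]{morel}) closely enough that it is almost certainly what \cite{VP} does. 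One sanity check confirms the key step: writing $e_0\cdot_E\alpha_+=a\alpha_-$, $e_0\cdot_E\alpha_-=b\alpha_+$ with $ab=1$ (from $e_0\cdot_E e_0\cdot_E=-\langle e_0,e_0\rangle\,\mathrm{id}=\mathrm{id}$), the requirement $(X\cdot\psi)^*=ie_0\cdot X\cdot\psi^*$ forces $c=ia=i/b$, which is consistent, so a suitable $c$ does exist exactly as you claim. The only soft spot is the hand-waving about the sign in (\ref{norma_R21}): attributing it solely to $\langle I,I\rangle=+1$ versus $\langle\sigma i\e,\sigma i\e\rangle=-1$ glosses over the fact that the scalar product $|\cdot|^2$ on $\Sigma M$ is defined by a \emph{different} coordinate formula in this subsection than in Section \ref{superf_R12}; the sign really emerges from the interplay of that choice with $H$ on $\Sigma^{++}\oplus\Sigma^{--}$ and the normalization of $c$, so this must be verified by the explicit computation you defer. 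That said, you flag this yourself, and as a proof outline the proposal is correct and on the expected route.
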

\noindent
The detailed proof is given in \cite{VP}. Using this identification, we have
\begin{equation*}
(D\psi)^*=ie_0\cdot D\psi^*
\end{equation*} 
 for all $\psi\in\Sigma M.$ If we suppose that $\varphi$ is a solution of (\ref{eqn dirac th}), we can choose $\psi\neq 0\in \Sigma M$ such that $\psi^*=\varphi^+;$ moreover, if (\ref{inm_iso_R21}) holds, $\psi$ satisfies 
\begin{equation*}
(D\psi)^*= ie_0\cdot D\psi^*=ie_0\cdot \vec{H}\cdot\psi^*=ie_0\cdot He_0\cdot\psi^*=iH\psi^*,
\end{equation*}
and, using (\ref{estruc_real_1}) and (\ref{norma_R21}), 
\begin{equation}\label{ecua_diracM_2}
D\psi=iH\psi,\hspace{0.3in}|\psi|^2=-1.
\end{equation} 
Reciprocally, if we suppose that $\psi\in\Sigma M$ satisfies (\ref{ecua_diracM_2}), we can define $\varphi^+:=\psi^*$ and $\varphi^-:=e_1\cdot\beta(\psi^*),$ and set $\varphi:=\varphi^++\varphi^-\in\Sigma;$ using (\ref{estruc_real_1}), it is not difficult to see that $\varphi$ satisfies (\ref{eqn dirac th}), and since $e_1\cdot\varphi=-\beta(\varphi),$ defines an isometric immersion of $M$ into $\R^{2,1}$ (Proposition \ref{carac_inm_isom} \emph{2-}). A solution of (\ref{ecua_diracM_2}) is thus equivalent to an isometric immersion of the Lorentzian surface into $\R^{2,1}.$ Here again, we obtain a spinor characterisation of an isometric immersion of a Lorentzian surface in $\R^{2,1},$ which is simpler than the characterisation obtained in \cite{lawn_roth} where two spinor fields are needed.

\begin{remar}
We also obtain an explicit representation formula: for $\psi\in\Sigma M,$ we may consider $\overline{\psi}$ and $\alpha(\psi)$ as in the previous section, and show that 
\begin{equation}\label{base_sigmaM2}
\alpha(\psi),\ i\overline{\psi},\ i\alpha(\psi),\ \overline{\psi}
\end{equation}  
is $\la.,.\ra$-orthonormal with signature $(-,+,-,+);$ in particular this is a real basis of $\Sigma M.$ Setting $\varphi:=\psi^*+e_1\cdot\beta(\psi^*)$ where $\psi\in\Sigma M$ is a solution of (\ref{ecua_diracM_2}), a computation yields 
\begin{align*}
\xi(X)& =\la\la X\cdot\varphi,\varphi\ra\ra\\ 
&=\la X\cdot\psi,\alpha(\psi)\ra\ \sigma i\e-\la X\cdot\psi,i\alpha(\psi)\ra\ iJ-\la X\cdot\psi,\overline{\psi}\ra\ K
\end{align*}
for all $X\in TM.$ Since $\la X\cdot\psi,i\overline{\psi}\ra=0,$ $\xi(X)$ may be interpreted as the coordinates of $X\cdot\psi$ in the orthonormal basis (\ref{base_sigmaM2}). Finally, $F=\int\xi$ represents the immersion.
\end{remar} 

\section{Spinor description of flat Lorentzian surfaces in $\R^{2,2}$}\label{section flat surfaces}
\subsection{The Grassmannian of the Lorentzian planes in $\R^{2,2}$}\label{section grassmannian}
The Grassmannian of the oriented Lorentzian planes in $\R^{2,2}$ identifies to
$$\Q=\{u_1\cdot u_2:\ u_1,u_2\in\R^{2,2},\ |u_1|^2=-|u_2|^2=-1\}\ \subset\ Cl_0(2,2).$$
Setting
\begin{equation*}
\Im m\ \mathbb{H}_0\ :=\ i\A I\ \oplus\ \A J\ \oplus\ i\A K
\end{equation*}
and since $e_2\cdot e_3\simeq iI, e_3\cdot e_1\simeq J$ and $e_1\cdot e_2\simeq iK$ in the identification  $Cl_0(2,2)\simeq \mathbb{H}_0$ given in (\ref{elempar}), we easily get  
\begin{equation*}
\Q\simeq\{p\in\Im m\ \mathbb{H}_0:\ H(p,p)=-1\};
\end{equation*} 
we moreover note that the tangent space of $\Q$ at a point $p$ is explicitly given by
$$T_p{\Q}\simeq\{\xi\in \Im m\ \mathbb{H}_0:\ H(p,\xi)=0\}.$$
We define the \textit{cross product} of two vectors $\xi,\xi'\in \Im m\ \mathbb{H}_0$ by
\begin{equation*} \xi\times\xi':=\frac{1}{2}(\xi\xi'-\xi'\xi)\in\Im m\ \mathbb{H}_0.
\end{equation*} 
It is such that
\begin{equation*}
\laa\xi,\xi' \raa=\sigma i\ H(\xi,\xi')\e+\sigma i\ \xi\times\xi'
\end{equation*}
for all $\xi,\xi'\in \Im m\ \mathbb{H}_0.$ We also define the \textit{mixed product} of three vectors $\xi,\xi',\xi''\in \Im m\ \mathbb{H}_0$ by
\begin{equation*} [\xi,\xi',\xi'']:=H(\xi\times\xi',\xi'')\in \A;
\end{equation*} 
it is also easily seen to be, up to sign, the determinant of the vectors $\xi,\xi',\xi''\in\Im m\ \mathbb{H}_0$ in the basis $(iI,J,iK)$ of $\Im m\ \mathbb{H}_0$ (considered as a $\A$-module). The mixed product is a $\A$-valued volume form on $\Im m\ \mathbb{H}_0,$ and induces a natural \textit{$\A$-valued area form} $\omega_{\Q}$ on $\Q$ by 
\begin{equation}\label{def omega Q}
\omega_{\Q}(p)(\xi,\xi'):=[\xi,\xi',p],
\end{equation} 
for all $p\in\Q$ and all $\xi,\xi'\in T_p\Q.$ 

\subsection{Lorentz surfaces and Lorentz numbers}\label{appendix lorentz}
In this section we present elementary results concerning Lorentz surfaces and Lorentz numbers. We will say that a surface $M$ is a Lorentz surface if there is a covering by open subsets $M=\cup_{\alpha\in S}U_{\alpha}$ and charts
$$\varphi_{\alpha}:\hspace{.3cm}U_{\alpha}\ \rightarrow\ \mathcal{A},\hspace{.5cm} \alpha\in S$$ 
such that the transition functions 
$$\varphi_{\beta}\circ\varphi_{\alpha}^{-1}:\hspace{.3cm}\varphi_{\alpha}(U_\alpha\cap U_\beta)\subset\mathcal{A}\ \rightarrow\ \varphi_{\beta}(U_\alpha\cap U_\beta)\subset\mathcal{A},\hspace{.5cm}\alpha,\ \beta\in S$$
are conformal maps in the following sense: for all $a\in\varphi_{\alpha}(U_\alpha\cap U_\beta)$ and $h\in\mathcal{A},$
$$d\ (\varphi_{\beta}\circ\varphi_{\alpha}^{-1})_a\ (\sigma\ h)\hspace{.3cm}=\hspace{.3cm}\sigma\ d\ (\varphi_{\beta}\circ\varphi_{\alpha}^{-1})_a\ (h).$$
A Lorentz structure is also equivalent to a smooth family of maps 
$$\sigma_x:\hspace{.3cm}T_xM\ \rightarrow\ T_xM,\hspace{.5cm} \mbox{with}\hspace{.5cm} \sigma_x^2=id_{T_xM},\ \sigma_x\neq\pm id_{T_xM}.$$
This definition coincides with the definition of a Lorentz surface given in \cite{Weinstein}: a Lorentz structure is equivalent to a conformal class of Lorentzian metrics on the surface, that is to a smooth family of cones in every tangent space of the surface, with distinguished lines. Indeed, the cone at $x\in M$ is
$$Ker(\sigma_x-id_{T_xM})\ \cup\ Ker(\sigma_x+id_{T_xM})$$
where the sign of the eigenvalues $\pm 1$ permits to distinguish one of the lines from the other. 

If $M$ is moreover oriented, we will say that the Lorentz structure is compatible with the orientation of $M$ if the charts $\varphi_\alpha: U_{\alpha}\rightarrow\mathcal{A},\ \alpha\in S$ preserve the orientations (the positive orientation in $\mathcal{A}=\{u+\sigma v,\ u,v\in\R\}$ is naturally given by $(\partial_u,\partial_v$)). In that case, the transition functions are conformal maps $\mathcal{A}\rightarrow\mathcal{A}$ preserving orientation.

If $M$ is a Lorentz surface, a smooth map $\psi:M\rightarrow \mathcal{A}$ (or $\mathcal{A}^n,$ or a Lorentz surface) will be said to be a conformal map if $d\psi$ preserves the Lorentz structures, that is if
$$d\psi_x(\sigma_xh)\ =\ \sigma_{\psi(x)}(d\psi_x(h))$$
for all $x\in M$ and $h\in T_xM.$ In a chart $\mathcal{A}=\{u+\sigma v,\ u,v\in\R\},$ a conformal map satisfies 
\begin{equation}\label{eqn crl}
\frac{\partial \psi}{\partial v}\ =\ \sigma\ \frac{\partial \psi}{\partial u}.
\end{equation}
Defining the coordinates $(s,t)$ such that
\begin{equation}\label{def s t}
u+\sigma\ v\ =\ \frac{1+\sigma}{2}\ s+\frac{1-\sigma}{2}\ t
\end{equation}
($s$ and $t$ are parameters along the distinguished lines) and writing
$$\psi\ =\ \frac{1+\sigma}{2}\ \psi_1+\frac{1-\sigma}{2}\ \psi_2$$
with $\psi_1,\psi_2\in\R,$ (\ref{eqn crl}) reads $\partial_t\psi_1=\partial_s\psi_2=0,$ and we get 
$$\psi_1=\psi_1(s)\hspace{1cm}\mbox{and}\hspace{1cm} \psi_2=\psi_2(t).$$
A conformal map is thus equivalent to two functions of one variable. We finally note that if $\psi:M\rightarrow\mathcal{A}^n$ is a conformal map, we have, in a chart $a:\mathcal{U}\subset\mathcal{A}\rightarrow M,$
$$d\psi\ =\ \psi' da,$$
where $da=du+\sigma dv$ and $\psi'$ belongs to $\mathcal{A}^n;$ this is a direct consequence of (\ref{eqn crl}).

\subsection{The Gauss map of a Lorentzian surface in $\R^{2,2}$}
Let $M$ be an oriented Lorentzian surface in $\R^{2,2}.$ We consider its Gauss map 
\begin{eqnarray*}
G:\hspace{.5cm}M&\rightarrow& \Q\\
x&\mapsto& u_1\cdot u_2
\end{eqnarray*}
where, at $x\in M,$ $(u_1,u_2)$ is a positively oriented orthogonal basis of $T_xM$ such that $|u_1|^2=-|u_2|^2=-1.$ The pull-back by the Gauss map of the area form $\omega_{\Q}$ defined in (\ref{def omega Q}) is given by the following proposition:
\begin{prop}\label{pullback_gauss}
We have \[G^*\omega_{\Q}=(K+\sigma K_N)\ \omega_M,\] where $\omega_M$ is the area form, $K$ is the Gauss curvature and $K_N$ is the normal curvature of $M.$  In particular, assuming moreover that 
\begin{equation}
dG_x:\hspace{.5cm}T_xM\to T_{G(x)}\Q 
\end{equation} 
is one-to-one at some point $x\in M,$ then $K=K_N=0$ at $x$ if and only if the linear space $dG_x(T_xM)$ is some $\A$-line in $T_{G(x)}\Q,$ i.e.  
\begin{equation}\label{A_linea}
dG_x(T_xM)=\{a\ U:\ a\in \A \}
\end{equation}
where $U$ is some vector belonging to $T_{G(x)}\Q\subset \mathbb{H}_0.$
\end{prop}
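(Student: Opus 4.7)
The plan is to prove the pull-back formula by a direct computation of $dG$ in an adapted frame, and then to deduce the second claim from a short linear-algebra argument on the $\mathcal{A}$-module $T_{G(x)}\mathcal{Q}$. First I would pick local positively oriented orthonormal frames $(e_2,e_3)$ of $TM$, with $|e_2|^2=-1$ and $|e_3|^2=+1$, and $(e_0,e_1)$ of $E$, so that $G=e_2\cdot e_3$ locally as a section of $Cl_0(2,2)$. Differentiating via the Leibniz rule and using the Gauss decomposition $\overline{\nabla}_Xe_i=\nabla_Xe_i+B(X,e_i)$ together with the fact that $\nabla_Xe_2=\theta(X)\,e_3$ and $\nabla_Xe_3=\theta(X)\,e_2$ for some real $1$-form $\theta$, the intrinsic contribution $\theta(X)(e_3\cdot e_3+e_2\cdot e_2)$ vanishes because $e_3\cdot e_3=-1$ and $e_2\cdot e_2=+1$. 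This gives
\[
dG(X)=B(X,e_2)\cdot e_3+e_2\cdot B(X,e_3)\ \in\ T_{G(x)}\mathcal{Q}.
\]

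Writing $B(e_i,e_j)=b_{ij}^{0}e_0+b_{ij}^{1}e_1$, I would then use the identification $Cl_0(2,2)\simeq\mathbb{H}_0$ (in which $G\simeq iI$) to compute the products $e_\alpha\cdot e_\beta$ in the $\mathcal{A}$-basis $(iI,J,iK)$ of $\mathrm{Im}\,\mathbb{H}_0$; these follow from the multiplication rules in $\mathbb{H}_0$ together with $e_0\cdot e_1\cdot e_2\cdot e_3\simeq\sigma\e$. The vectors $dG(e_2),dG(e_3)$ then have $\mathcal{A}$-coordinates only along $J$ and $iK$ (their $iI$-component vanishes, consistently with $dG(X)\in T_G\mathcal{Q}$), and the mixed product $[dG(e_2),dG(e_3),G]$ equals, up to the sign dictated by $[iI,J,iK]=-1$, the $\mathcal{A}$-valued $2\times 2$ determinant built from those coordinates. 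Expanding yields an element $A_0+\sigma A_1\in\mathcal{A}$, and a direct inspection gives $A_0=|B(e_2,e_3)|^2-\langle B(e_2,e_2),B(e_3,e_3)\rangle=K$ (Gauss equation) and $A_1=\langle(S_{e_0}\circ S_{e_1}-S_{e_1}\circ S_{e_0})(e_2),e_3\rangle=K_N$ (Ricci equation). Since $\omega_M(e_2,e_3)=1$ in such a frame, this establishes $G^*\omega_{\mathcal{Q}}=(K+\sigma K_N)\omega_M$.

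For the second claim, one direction is immediate: if $dG_x(T_xM)=\mathcal{A}U$ then $\omega_{\mathcal{Q}}(aU,bU)=ab\,[U,U,G(x)]=0$, whence $(K+\sigma K_N)(x)=0$ and $K(x)=K_N(x)=0$. For the converse, assume $K=K_N=0$ at $x$ and $dG_x$ injective, so that $V:=dG_x(T_xM)$ is a $2$-real-dimensional $\omega_{\mathcal{Q}}$-isotropic subspace of $T_{G(x)}\mathcal{Q}$. The idempotent decomposition $\mathcal{A}=\tfrac{1+\sigma}{2}\mathbb{R}\oplus\tfrac{1-\sigma}{2}\mathbb{R}$ induces a splitting $T_{G(x)}\mathcal{Q}=W_+\oplus W_-$ into two $2$-real-dimensional planes, which is $\omega_{\mathcal{Q}}$-orthogonal and on which $\omega_{\mathcal{Q}}$ restricts to non-degenerate real symplectic forms (with values in $\tfrac{1\pm\sigma}{2}\mathbb{R}$). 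The projections $V_\pm$ of $V$ onto $W_\pm$ are therefore real-isotropic, hence at most $1$-dimensional; non-degeneracy of $\omega_{\mathcal{Q}}$ on each $W_\pm$ rules out $V=W_+$ and $V=W_-$, so in fact $\dim_{\mathbb{R}}V_\pm=1$ and $V=V_+\oplus V_-$. Since $\sigma$ acts as $+\mathrm{id}$ on $W_+$ and $-\mathrm{id}$ on $W_-$, such a $V$ is automatically $\sigma$-stable, and one checks directly that $V=\mathcal{A}(v_++v_-)$ for any nonzero $v_\pm\in V_\pm$.

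The main obstacle is the explicit Clifford computation underlying the pull-back formula: keeping track of the signs in the multiplication table of $\mathbb{H}_0$ and in the identification of the $\mathcal{A}$-valued mixed product with the $\mathcal{A}$-valued determinant. The linear-algebra step for the second claim is essentially painless once one systematically uses the $\mathcal{A}=\frac{1+\sigma}{2}\mathbb{R}\oplus\frac{1-\sigma}{2}\mathbb{R}$ decomposition to reduce to real symplectic geometry on each factor $W_\pm$.
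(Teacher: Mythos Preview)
Your proposal is correct and, for the first claim, follows the paper's own route: the paper simply refers to a ``direct computation'' analogous to \cite[Proposition 6.3]{bayard1}, and what you outline (differentiate $G=e_2\cdot e_3$ with the ambient connection, kill the tangential part via $e_2\cdot e_2+e_3\cdot e_3=0$, express $dG(e_2),dG(e_3)$ in the $\A$-basis $(iI,J,iK)$, and read $K+\sigma K_N$ off the $\A$-valued $2\times2$ determinant) is exactly that computation. The only thing to watch is the global sign coming from the convention ``the mixed product is, \emph{up to sign}, the determinant in $(iI,J,iK)$''; once this sign is fixed, your real and $\sigma$ parts match the Gauss and Ricci expressions.

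For the second claim your argument is a genuine, slightly cleaner variant of the paper's. The paper proves it via Lemma~\ref{lin_indep} in the appendix: writing $\xi=\frac{1+\sigma}{2}\xi_1+\frac{1-\sigma}{2}\xi_2$, the condition $\omega_{\Q}(\xi,\xi')=0$ becomes $\xi_1\times\xi_1'=\xi_2\times\xi_2'=0$ in $\R^3$, and a short case analysis on which of $\xi_1,\xi_2,\xi_1',\xi_2'$ vanish shows that $\{\xi,\xi'\}$ spans an $\A$-line. You use the same idempotent splitting but organise it as real symplectic linear algebra on the two $2$-dimensional factors $W_\pm$ of $T_{G(x)}\Q$: $\omega_{\Q}$ is non-degenerate on each, so the projections $V_\pm$ of the isotropic plane $V=dG_x(T_xM)$ are at most lines; then $\dim V=2$ forces $\dim V_\pm=1$ and $V=V_+\oplus V_-$, hence $V=\A(v_++v_-)$. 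This avoids the case analysis entirely and makes the role of the injectivity hypothesis ($\dim_\R V=2$) transparent. The one step worth spelling out in your write-up is why $V=V_+\oplus V_-$ and not merely $V\hookrightarrow V_+\oplus V_-$: the inclusion $V\subset V_+\oplus V_-$ is the identity map $(\pi_+,\pi_-)$ restricted to $V$, so equality follows by dimension.
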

\begin{proof}The first part of the proposition may be obtained by a direct computation, exactly as in \cite[Proposition 6.3]{bayard1}; see also \cite[Proposition 3.1]{BPS} for a similar statement. The second part of the proposition is a consequence of Lemma \ref{lin_indep} in the appendix at the end of the paper.
\end{proof}

As a consequence of Proposition \ref{pullback_gauss}, if $K=K_N=0$ and $G:M\to \Q$ is a regular map, there is a unique Lorentz structure $\sigma$ on $M$ such that  \begin{equation}\label{lorentz}
dG_x(\sigma\ u)=\sigma\ dG_x(u)
\end{equation}
for all $x\in M$ and all $u\in T_xM.$ Indeed, (\ref{A_linea}) implies that $dG_x(T_xM)$ is stable by multiplication by $\sigma,$ and we may define
$$\sigma\ u:=dG_x^{-1}\left(\sigma\ dG_x(u)\right).$$

\subsection{The invariant $\Delta$ of a Lorentzian surface in $\R^{2,2}$}\label{section delta}
If the Gauss map of $M$ is viewed as a map $G:M\rightarrow \Lambda^2\R^{2,2},$ we define
$$\delta(u):=\frac{1}{2}dG_x(u)\wedge dG_x(u)\hspace{.5cm}\in\ \Lambda^4\R^{2,2}$$
for all $u\in T_xM;$ using the canonical volume element $e_0\wedge e_1\wedge e_2\wedge e_3,$ we may identify $\Lambda^4\R^{2,2}$ to $\R$ and thus consider $\delta$ as a quadratic form on $T_xM;$ its determinant with respect to the natural metric on $M$
$$\Delta:=\det{}_g\delta$$
is a second order invariant of the surface; it is positive if and only if the surface admits two distinct asymptotic directions at every point (since an asymptotic direction is by definition a vector vanishing $\delta$ and the sign of $\Delta$ is the opposite of the discriminant of $\delta$), see \cite{BPS}. This invariant was introduced for surfaces in 4-dimensional euclidian space in \cite{Little}.

\subsection{Local description of the flat Lorentzian surfaces with flat normal bundle}
In this section we suppose that $M$ is simply connected and that the bundles $TM$ and $E$ are flat ($K=K_N=0$). We recall that the bundle $\Sigma:=\Sigma E\otimes\Sigma M$ is associated to the principal bundle $\tilde{Q}$ and to the representation $\rho$ of the structure group $Spin'(1,1)\times Spin''(1,1)$ in $\mathbb{H}_0$ given by (\ref{repres_grupo_estructura}). Since the curvatures $K$ and $K_N$ are zero, the spinorial connection on the bundle $\tilde{Q}$ is flat, and $\tilde{Q}$ admits a parallel local section $\tilde{s};$ since $M$ is simply connected, the section $\tilde{s}$ is in fact globally defined. We consider $\varphi\in\Gamma(\Sigma)$ a solution of (\ref{eqn dirac th}) and $g=[\varphi]:M\to Spin(2,2)\subset\mathbb{H}_0$ the coordinates of $\varphi$ in $\tilde{s}:$ \begin{equation*}
\varphi=[\tilde{s},g]\hspace{.5cm} \in \ \Sigma=\tilde{Q}\times\mathbb{H}_0/\rho.
\end{equation*}
Note that, by Theorem \ref{thm_prin}, $\varphi$ also satisfies 
\begin{equation}\label{killing local description}
\nabla_X\varphi=\eta(X)\cdot\varphi
\end{equation} 
for all $X\in TM,$ where 
\begin{equation}\label{special form eta}
\eta(X)=-\frac{1}{2}\sum_{j=2,3}\epsilon_j e_j\cdot B(X,e_j)
\end{equation} 
for some bilinear map $B:TM\times TM \to E.$ In the following, we will denote by $(e_0,e_1)$ and $(e_2,e_3)$ the parallel, orthonormal and positively oriented frames, respectively normal, and tangent to $M,$ corresponding to $\tilde{s},$ i.e. such that $\pi(\tilde{s})=(e_0,e_1,e_2,e_3)$ where $\pi:\tilde{Q}\to Q_E\times Q_M$ is the natural projection. We moreover assume that the Gauss map $G$ of the immersion defined by $\varphi$ is regular, and consider the Lorentz structure $\sigma$ induced on $M$ by $G,$ defined by (\ref{lorentz}). 

We now show that $g$ is in fact a conformal map admitting a special parametri\-zation, and that, in such a special parametrization, $g$ depends on a single conformal map $\psi:\U\subset\A\to\A$ (see Section \ref{appendix lorentz} for the notion of conformal map on a Lorentz surface). To establish this result,  we will first need some preliminary lemmas; since they are analogous to lemmas given in \cite{bayard1}, we only give very brief indications of their proofs, and refer to this paper for details. 

\begin{lem}
Let $g=[\varphi]:M\to Spin(2,2)\subset \mathbb{H}_0$ represent $\varphi$ in some local section of $\tilde{Q}.$ The Gauss map of the immersion defined by $\varphi$ is given by 
\begin{eqnarray}\label{gauss_map_spinorial}
G:\hspace{.5cm} M & \longrightarrow & \Q\hspace{.5cm}\subset \Im m\ \mathbb{H}_0 \\
x & \longmapsto & i\ g^{-1}Ig. \notag
\end{eqnarray}
\end{lem}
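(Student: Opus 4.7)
The plan is to compute $G(x) = F_{*}e_2 \cdot F_{*}e_3$ directly in the matrix model (\ref{description Cl22}) of $Cl(2,2)$, starting from the Weierstrass formula of Section \ref{weierstrass_repre}.

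By Theorem \ref{teorema_isometria}, $F_{*}e_j = \xi(e_j)$. Combining the definition (\ref{prod_escal_vector}) of $\laa\cdot,\cdot\raa$ with the coordinate rule $[X\cdot\varphi] = \sigma i\,[X]\,\widehat{[\varphi]}$ from Section \ref{section notation}, and using that $\sigma,i$ are central in $\HAC$ with $\sigma^2 i^2 = -1$, one obtains
$$\xi(X) \;=\; \sigma i\,\overline{[\varphi]}\,[X\cdot\varphi] \;=\; -\,\overline{g}\,[X]\,\widehat{g}$$
for every $X$ tangent to $M$. Since $\pi(\tilde{s}) = (e_0,e_1,e_2,e_3)$, Section \ref{section notation} also gives $[e_2]=iJ$ and $[e_3]=K$, hence
$$F_{*}e_2 \;=\; -\,\overline{g}\,(iJ)\,\widehat{g}\,, \qquad F_{*}e_3 \;=\; -\,\overline{g}\,K\,\widehat{g}\, .$$

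Next, write $q_j = F_{*}e_j \in \R^{2,2} \subset \mathbb{H}_1$. Block multiplication of the matrices representing $q_1$ and $q_2$ in (\ref{description Cl22}) yields
$$\begin{pmatrix}0 & q_1\\ \widehat{q_1} & 0\end{pmatrix}\begin{pmatrix}0 & q_2\\ \widehat{q_2} & 0\end{pmatrix} \;=\; \begin{pmatrix} q_1\,\widehat{q_2} & 0\\ 0 & \widehat{q_1}\,q_2\end{pmatrix},$$
so under (\ref{elempar}) the Gauss map is $G(x) = q_1\,\widehat{q_2} \in \mathbb{H}_0$. Because the hat is an algebra automorphism of $\HAC$ fixing $i, I, J, K$ and commuting with the Clifford conjugation, $\widehat{q_2} = -\,\overline{\widehat{g}}\,K\,g$, whence
$$q_1\,\widehat{q_2} \;=\; \overline{g}\,(iJ)\,\widehat{g}\,\overline{\widehat{g}}\,K\,g \;=\; i\,\overline{g}\,(JK)\,g\,,$$
where the last equality uses $\widehat{g}\,\overline{\widehat{g}} = \widehat{g\,\overline{g}}$ together with the centrality of $i$.

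Finally, since $g \in Spin(2,2)$ satisfies $H(g,g)=1$, a direct check from (\ref{aplic_H_restricta}) gives $p\,\overline{p} = H(p,p)\,\e$ on $\mathbb{H}_0$, hence $g\,\overline{g} = \e$ (so $\overline{g} = g^{-1}$); applying the hat yields $\widehat{g}\,\overline{\widehat{g}} = \e$, which closes the previous step. Combined with the quaternion identity $JK = I$, this delivers $G(x) = i\,g^{-1}\,I\,g$, as asserted in (\ref{gauss_map_spinorial}). The only real difficulty is the careful bookkeeping of the two commuting involutions and the central scalars $\sigma,i$; no deeper fact is required beyond the matrix model of the Clifford algebra, the Weierstrass formula, and the defining identity of $Spin(2,2)$.
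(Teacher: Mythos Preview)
Your proof is correct and follows exactly the route indicated in the paper's one-line argument, namely computing $G=\laa e_2\cdot\varphi,\varphi\raa\,\laa e_3\cdot\varphi,\varphi\raa$ in a spinorial frame above $(e_0,e_1,e_2,e_3)$; you have simply made all the quaternionic bookkeeping explicit. There is no substantive difference in approach.
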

\begin{proof}
This is the identity 
$$G=\langle\langle e_2\cdot\varphi,\varphi\rangle\rangle\langle\langle e_3\cdot\varphi,\varphi\rangle\rangle$$ 
written in a section of $\tilde{Q}$ above $(e_2,e_3).$
\end{proof}
\begin{lem}\label{dg g inverse}
Denoting by $[\eta]\in\Omega^1(M,\mathbb{H}_0)$ the $1$-form which represents $\eta$ in $\tilde{s},$ we have 
\begin{equation}\label{eta_coordenadas}
[\eta]=dg\ g^{-1}=\eta_1J+i\eta_2K, 
\end{equation} 
where $\eta_1$ and $\eta_2$ are $1$-forms on $M$ with values in $\A.$
\end{lem}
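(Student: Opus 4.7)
The plan is to exploit the parallelism of $\tilde s$: with $\tilde s$ fixed once and for all, the spinorial covariant derivative reduces in coordinates to ordinary differentiation, so for $\varphi=[\tilde s,g]$ one has $[\nabla_X\varphi]=dg(X)$. On the other hand, the Clifford action rule recalled in Section \ref{section notation} gives $[\eta(X)\cdot\varphi]=[\eta(X)]\,g$, since $\eta(X)\in Cl_0(E\oplus TM)$. Plugging these two identities into (\ref{killing local description}) yields $dg(X)=[\eta(X)]\,g$ for every $X\in TM$, whence the first claimed equality $[\eta]=dg\cdot g^{-1}$.

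For the second identity I would argue by a direct computation in $\mathbb{H}_0\simeq Cl_0(2,2)$. In view of (\ref{special form eta}), $\eta(X)$ is an $\mathbb{R}$-linear combination of Clifford products $e_j\cdot\nu$ with $j\in\{2,3\}$ and $\nu\in E\simeq\R e_0\oplus\R e_1$, so it suffices to locate the four elementary products $e_j\cdot e_k$, $j\in\{2,3\}$, $k\in\{0,1\}$, inside $\mathbb{H}_0$. Using the identifications recalled at the end of Section \ref{section notation}, namely $e_0\simeq\sigma i\e$, $e_1\simeq I$, $e_2\simeq iJ$, $e_3\simeq K$ in $\mathbb{H}_1$, together with the matrix rule from (\ref{description Cl22}) which reads $[Y_1\cdot Y_2]=[Y_1]\,\widehat{[Y_2]}$ on pairs of odd elements, I would compute
\begin{equation*}
[e_2\cdot e_0]=\sigma J,\qquad [e_2\cdot e_1]=-iK,\qquad [e_3\cdot e_0]=-\sigma iK,\qquad [e_3\cdot e_1]=J.
\end{equation*}
All four of these elements lie in $\A\cdot J\oplus\A\cdot(iK)\subset\mathbb{H}_0$, so by $\mathbb{R}$-linearity $[\eta(X)]$ does too, which is exactly the claimed decomposition $[\eta]=\eta_1 J+i\eta_2 K$ with $\eta_1,\eta_2\in\Omega^1(M,\A)$.

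The only step requiring any care is the Clifford-to-quaternion bookkeeping, and more precisely the conjugation $\widehat{\cdot}$ that intervenes when multiplying matrix representatives of odd elements: the off-diagonal block form displayed in (\ref{description Cl22}) forces exactly one of the two factors to be conjugated, which is responsible for the $\sigma$ coefficients appearing in $[e_2\cdot e_0]$ and $[e_3\cdot e_0]$ and ensures that $\eta_1,\eta_2$ genuinely take values in $\A$ rather than merely in $\R$. Beyond this routine verification, no further computation is needed: the structural fact that the ``mixed'' tangential--normal Clifford products land in precisely the $J$- and $iK$-slots of $\mathbb{H}_0$ is what produces the stated form of $[\eta]$.
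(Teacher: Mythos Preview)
Your argument is correct and follows exactly the approach indicated in the paper: you write (\ref{killing local description}) in the parallel frame $\tilde s$ to obtain $[\eta]=dg\,g^{-1}$, and then use the special form (\ref{special form eta}) of $\eta$ (together with the Clifford-to-quaternion identifications of Section \ref{section notation}) to see that $[\eta]$ lands in $\A J\oplus i\A K$. The explicit bookkeeping with $[Y_1\cdot Y_2]=[Y_1]\,\widehat{[Y_2]}$ and the four elementary products $[e_j\cdot e_k]$ is precisely the computation the paper leaves implicit.
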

\begin{proof}
This is (\ref{killing local description}) in the parallel frame $\tilde{s},$ taking into account the special form (\ref{special form eta}) of $\eta$ for the last equality.
\end{proof}
\begin{lem} 
The 1-form
\begin{equation}\label{def eta tilde}
\tilde{\eta}:=\sigma i\ \laa \eta\cdot \varphi,\varphi\raa
\end{equation} 
satisfies $\tilde{\eta}=-\frac{1}{2}G^{-1}dG=-g^{-1}dg.$
\end{lem}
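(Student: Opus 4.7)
The plan is to compute everything in the parallel spinorial frame $\tilde{s}$ of Section \ref{section notation}, where $\varphi$ is represented by $g\in Spin(2,2)\subset\mathbb{H}_0$. First I would use the formula $[\eta(X)\cdot\varphi]=[\eta(X)]\,g$ for the Clifford action of an even-degree element (Section \ref{section notation}), together with the definition (\ref{prod_escal_vector}) of the $\mathbb{H}_1$-valued pairing, to obtain
\[\laa\eta\cdot\varphi,\varphi\raa\;=\;\sigma i\,\overline g\,[\eta]\,g.\]
Multiplying by $\sigma i$ and using $(\sigma i)^2=-\e$ gives $\tilde\eta=-\overline g\,[\eta]\,g$. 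The defining equation $H(g,g)=g\overline g=\e$ of $Spin(2,2)$ then yields $\overline g=g^{-1}$, and substituting $[\eta]=dg\,g^{-1}$ from Lemma \ref{dg g inverse} produces $\tilde\eta=-g^{-1}(dg\,g^{-1})\,g=-g^{-1}dg$, which is the second equality in the claim.

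For the first equality, I would differentiate the Gauss-map expression $G=i\,g^{-1}Ig$ from (\ref{gauss_map_spinorial}). Setting $A:=g^{-1}dg$ and $\tilde I:=g^{-1}Ig$, the Leibniz rule together with $d(g^{-1})=-g^{-1}dg\,g^{-1}$ yields $dG=i[\tilde I,A]$. Since $\tilde I^{2}=g^{-1}I^{2}g=-\e$, one has $G^{2}=(i\tilde I)^{2}=\e$ and therefore $G^{-1}=G=i\tilde I$; expanding gives
\[-\tfrac12\,G^{-1}dG\;=\;-\tfrac12\,(i\tilde I)\,i[\tilde I,A]\;=\;-\tfrac12\bigl(A+\tilde I A\tilde I\bigr).\]
Matching this against $-A=-g^{-1}dg$ reduces the first equality to the identity $\tilde I A\tilde I=A$ or, equivalently after conjugating by $g$, to $I[\eta]I=[\eta]$.

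This last identity is where the special form of $\eta$ enters, and it is the main (though brief) step to verify. Lemma \ref{dg g inverse} tells us that $[\eta]=\eta_1 J+i\eta_2 K$ with $\eta_1,\eta_2$ taking values in $\mathcal{A}$, and the quaternion relations $IJ=K=-JI$ and $IK=-J=-KI$ give $IJI=J$ and $IKI=K$ at once, hence $I[\eta]I=[\eta]$. Algebraically this is the reflection of the fact that $\eta$ is assembled from tangent-times-normal Clifford products $e_j\cdot B(X,e_j)$, so its image lies in the $\mathcal{A}$-plane spanned by $J$ and $iK$ inside $\mathfrak{spin}(2,2)$, which is precisely the fixed subspace of conjugation by $I$. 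Combining the three pieces yields $-\tfrac12\,G^{-1}dG=-A=-g^{-1}dg=\tilde\eta$, completing the proof.
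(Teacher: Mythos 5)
Your proof is correct, and it follows the same route the paper sketches: compute $\tilde\eta$ in the parallel frame $\tilde s$ using $[\eta\cdot\varphi]=[\eta]g$, $\overline g=g^{-1}$, and Lemma \ref{dg g inverse} to get $\tilde\eta=-g^{-1}dg$, then differentiate $G=ig^{-1}Ig$ from (\ref{gauss_map_spinorial}) for the other equality. Your explicit verification that $I[\eta]I=[\eta]$ (via the special form $[\eta]=\eta_1J+i\eta_2K$ from Lemma \ref{dg g inverse}) is exactly the step the paper elides by calling the second identity ``an easy consequence.''
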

\begin{proof}
Identity (\ref{def eta tilde}) in $\tilde{s}$ together with (\ref{eta_coordenadas}) imply that $\tilde{\eta}= -g^{-1}dg.$ The other identity is an easy consequence of (\ref{gauss_map_spinorial}).
\end{proof}
The properties (\ref{gauss_map_spinorial}) and (\ref{eta_coordenadas}) may be rewritten as follows: 
\begin{lem}\label{distribucion}
Consider the projection 
\begin{align*}
p:\hspace{.5cm}Spin(2,2)\hspace{.2cm}\subset \mathbb{H}_0 &\longrightarrow \Q\hspace{.2cm}\subset \Im m\ \mathbb{H}_0\\
g & \longmapsto i\ g^{-1}Ig
\end{align*}as a $S_{\A}^1$-principal bundle, where the action of $S_{\A}^1$ on $Spin(2,2)$ is given by the multiplication on the left. It is equipped with the horizontal distribution given at every $g\in Spin(2,2)$ by 
\begin{equation} \mathcal{H}_g:=dR_{g}(\A J\oplus i\A K)\hspace{.3cm}\subset\ T_gSpin(2,2), 
\end{equation} 
where $R_{g}$ stands for the right-multiplication by $g$ on $Spin(2,2).$ The distribution $(\mathcal{H}_g)_{g\in Spin(2,2)}$ is $H$-orthogonal to the fibers of $p,$ and, for all $g\in Spin(2,2),$ $dp:\mathcal{H}_g\to T_{p(g)}\Q$ is an isomorphism which preserves $\sigma$ and such that
\begin{equation}\label{dist_formula} 
H(dp(u),dp(u))=-4H(u,u)
\end{equation} 
for all $u\in\mathcal{H}_g.$ With these notations, we have 
\begin{equation}\label{gauss_conforme}
G=p\circ g,
\end{equation} 
and the map $g:M\to Spin(2,2)$ appears to be a horizontal lift to $Spin(2,2)$ of the Gauss map $G:M\to \Q.$
\end{lem}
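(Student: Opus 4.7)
The plan is to verify the four claims by direct computation in $\mathbb{H}_0$, exploiting the identity $g\overline{g}=H(g,g)\mathbf{1}$ valid for every $g\in\mathbb{H}_0$; in particular $\overline{g}=g^{-1}$ for $g\in Spin(2,2)$, so that both right translation $R_g$ and the conjugation $x\mapsto g^{-1}xg$ are $H$-isometries of $\mathbb{H}_0$.

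First I would establish that $p$ is a principal $S_{\mathcal{A}}^1$-bundle. Each $h=\pm(\cosh a+i\sinh a\,I)\in S_{\mathcal{A}}^1$ is a polynomial in $\mathbf{1}$ and $I$, hence commutes with $iI$, so $p(hg)=g^{-1}h^{-1}(iI)hg=g^{-1}(iI)g=p(g)$ and $S_{\mathcal{A}}^1$ preserves the fibres. Conversely, expanding the condition $[x,iI]=0$ in the $\mathcal{A}$-basis $(\mathbf{1},iI,J,iK)$ of $\mathbb{H}_0$ shows that the centraliser of $iI$ in $\mathbb{H}_0$ is $\mathcal{A}\mathbf{1}\oplus i\mathcal{A}I$; intersecting with $Spin(2,2)$ gives exactly $S_{\mathcal{A}}^1$, so $p(g_1)=p(g_2)$ forces $g_2 g_1^{-1}\in S_{\mathcal{A}}^1$. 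Surjectivity of $p$ onto $\mathcal{Q}$ is the transitivity, through $\Phi$, of the conjugation action of $Spin(2,2)$ on oriented Lorentzian $2$-planes of $\R^{2,2}$ together with the identification $e_2\cdot e_3\simeq iI$.

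Next I would compute $dp_g$. Writing a tangent vector at $g$ as $vg$ with $v\in\text{Im}\,\mathbb{H}_0$ and differentiating $t\mapsto\exp(tv)g$ gives $dp_g(vg)=g^{-1}[iI,v]g$, and the elementary commutators
\[
[iI,icI]=0,\qquad [iI,aJ]=2iaK,\qquad [iI,ibK]=2bJ,\quad a,b,c\in\mathcal{A},
\]
identify $\ker dp_g$ with $dR_g(i\mathcal{A}I)$ (the vertical) and show that $dp_g$ is a bijection from $\mathcal{H}_g=dR_g(\mathcal{A}J\oplus i\mathcal{A}K)$ onto $T_{p(g)}\mathcal{Q}$ (swapping the $J$ and $iK$ components up to a factor $2$). $H$-orthogonality between $\mathcal{H}_g$ and the vertical reduces, by the right-translation isometry, to the obvious orthogonality of $i\mathcal{A}I$ and $\mathcal{A}J\oplus i\mathcal{A}K$ in the diagonal form $H(p,p)=p_0^2-p_1^2+p_2^2-p_3^2$. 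Since the commutator with $iI$ is $\mathcal{A}$-linear in $v$ and $\sigma\in\mathcal{A}$ is central, $dp_g(\sigma u)=\sigma\,dp_g(u)$, giving $\sigma$-equivariance. For the norm identity I specialise to $v=aJ+ibK$ and use the conjugation isometry:
\[
H(dp_g(vg),dp_g(vg))=H(2iaK+2bJ,2iaK+2bJ)=-4a^2+4b^2=-4H(v,v)=-4H(vg,vg),
\]
which is (\ref{dist_formula}).

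Finally, $G=p\circ g$ is just a rewriting of the previously established formula (\ref{gauss_map_spinorial}), and the horizontality of $g$ follows at once from Lemma \ref{dg g inverse}: since $dg\cdot g^{-1}=[\eta]=\eta_1 J+i\eta_2 K\in\mathcal{A}J\oplus i\mathcal{A}K$, one has $dg(X)\in dR_{g(x)}(\mathcal{A}J\oplus i\mathcal{A}K)=\mathcal{H}_{g(x)}$. The only real obstacle is bookkeeping: one must keep in mind that the $\mathcal{A}$-basis of $\text{Im}\,\mathbb{H}_0$ is $(iI,J,iK)$ rather than $(I,J,K)$, which is exactly the reason why $[iI,\cdot\,]$ sends $\mathcal{A}J\oplus i\mathcal{A}K$ back into itself.
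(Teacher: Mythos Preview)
Your proof is correct. The paper itself gives no proof of this lemma: it is introduced with the phrase ``The properties (\ref{gauss_map_spinorial}) and (\ref{eta_coordenadas}) may be rewritten as follows'' and is left without a \texttt{proof} environment, the authors treating the principal-bundle and horizontal-distribution statements as routine verifications and the last two claims as literal restatements of the two preceding lemmas. Your argument supplies exactly the computations the paper suppresses. The centraliser calculation for $iI$ in $\mathbb{H}_0$ pins down the fibre as $S_{\mathcal A}^1$; the commutator formula $dp_g(vg)=g^{-1}[iI,v]g$ together with $[iI,aJ]=2iaK$ and $[iI,ibK]=2bJ$ gives at once the kernel (vertical) and the image (the $H$-orthogonal of $p(g)$ in $\Im m\,\mathbb{H}_0$, i.e.\ $T_{p(g)}\mathcal Q$); and the norm identity (\ref{dist_formula}) then drops out from $H(2bJ+2iaK,2bJ+2iaK)=4b^2-4a^2=-4(a^2-b^2)$ after noting that both $R_g$ and conjugation by $g$ are $H$-isometries because $\overline{g}=g^{-1}$. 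The $\sigma$-equivariance is immediate from the $\mathcal A$-linearity of $[iI,\cdot\,]$, and $G=p\circ g$ with horizontality of $g$ are, as you say, just (\ref{gauss_map_spinorial}) and (\ref{eta_coordenadas}). One cosmetic point: in your displayed norm computation you wrote $H(2iaK+2bJ,2iaK+2bJ)$ directly, silently using that the outer conjugation $g^{-1}(\cdot)g$ in $dp_g$ preserves $H$; you mention ``the conjugation isometry'' in the text, so the logic is there, but it would read more cleanly to insert that step explicitly.
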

\begin{remar}
The fibration described in the lemma above generalises the Lorentzian Hopf fibration of pseudo-spheres studied in \cite{leon}. See also \cite[Lemma 6.6]{bayard1} for a similar result in 4-dimensional Minkowski space.
\end{remar}
To proceed further, we need to assume that the invariant $\Delta$ does not vanish; we first suppose $\Delta>0,$ and only mention at the end of the section, and without proof, the similar results concerning the case $\Delta<0$ (see also Remark \ref{rmk missing cases} below, where we recall the results obtained in \cite{BPS} concerning the case $\Delta=0$).
\begin{thm}\label{nueva_carta}
Additionally to the assumptions given at the beginning of the section, we suppose that $\Delta$ is positive on $M;$ we then have:
\\
\\1- the map $g:M\to Spin(2,2)\subset\mathbb{H}_0$ is a conformal map, and, at each point of $M,$ there is a local chart $a:\U\subset\A \to M,$ unique up to the action of 
\begin{equation*}
\mathbb{G}:=\{ a\longmapsto \pm a+b:\ b\in\A \},
\end{equation*} 
which is compatible with the orientation of $M$ and such that $g:\U\subset\A\to Spin(2,2)$ satisfies
\begin{equation}\label{g normalized}
H(g',g')\equiv \pm1;
\end{equation}
2- there exists a conformal map $\psi:\U\subset\A\to \A$ such that 
\begin{equation}\label{nueva_carta_2}
g'g^{-1}=\cosh\psi J+i\sinh\psi K\hspace{0.2in}\text{or}\hspace{0.2in}g'g^{-1}=\sinh\psi J+i\cosh\psi K,
\end{equation}
where $a:\U\subset\A \to M$ is a chart defined in 1-.
\end{thm}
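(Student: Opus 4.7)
The plan follows the strategy used for $\R^{1,3}$ in \cite{bayard1}, unfolding in three main steps.

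First, I would show that $g: M \to Spin(2,2) \subset \mathbb{H}_0$ is conformal with respect to the Lorentz structure on $M$ induced by the Gauss map, viewing $\mathbb{H}_0 \simeq \A^4$ as a free $\A$-module. By Lemma \ref{distribucion}, $g$ is a horizontal lift of $G = p \circ g$, so $dg_x(T_xM) \subset \mathcal{H}_{g(x)}$. The differential $dp$ preserves $\sigma$ on horizontal vectors (since left multiplication by $\sigma$ stabilizes $\A J \oplus i\A K$ and commutes with the right translation $dR_g$), and $dG$ preserves $\sigma$ by the defining property (\ref{lorentz}) of the Lorentz structure on $M$. Since $dp$ is an isomorphism $\mathcal{H}_g \to T_{p(g)}\Q$, this forces $dg$ to be $\sigma$-equivariant, i.e. $g$ is conformal.

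Next, in any Lorentz chart $a: \mathcal{U} \subset \A \to M$ compatible with the orientation, conformality gives $dg = g'\,da$ for some $\mathbb{H}_0$-valued function $g'$, and combining with Lemma \ref{dg g inverse} yields $g'g^{-1} = \lambda_1 J + i\lambda_2 K$ with $\lambda_1,\lambda_2 : \mathcal{U} \to \A$ conformal. Using the bi-invariance of $H$ under $Spin(2,2)$, $H(g',g') = H(g'g^{-1},g'g^{-1}) = \lambda_1^2-\lambda_2^2 \in \A$. Writing $H(g',g') = \frac{1+\sigma}{2} h_+(s) + \frac{1-\sigma}{2} h_-(t)$ in characteristic coordinates $(s,t)$, a direct calculation analogous to Proposition \ref{pullback_gauss} expresses $\delta(X) = \frac{1}{2} dG(X)\wedge dG(X)$ in terms of $h_\pm$, and identifies the condition $\Delta > 0$ with the two functions $h_+(s), h_-(t)$ having a common, nonzero, locally constant sign $\epsilon = \pm 1$. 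The equation $(f'(a))^2 = \epsilon\, H(g'(a),g'(a))$ then splits into two one-variable ODEs with positive right-hand sides, and admits a conformal solution $f$; changing the chart to $\tilde{a} = f(a)$ gives $\tilde{g}' = g'/f'$ and $H(\tilde{g}',\tilde{g}') \equiv \epsilon$. For uniqueness, two such normalized charts differ by a conformal change $h : \A \to \A$ with $(h')^2 = 1$; combined with orientation preservation, this forces $h' \equiv \pm 1$ identically, i.e. $h \in \mathbb{G}$.

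For part 2, in a normalized chart the splitting $\lambda_j = \frac{1+\sigma}{2}\lambda_j^+(s) + \frac{1-\sigma}{2}\lambda_j^-(t)$ together with $\lambda_1^2 - \lambda_2^2 = \epsilon$ reduces to $(\lambda_1^+)^2 - (\lambda_2^+)^2 = \epsilon$ and its analogue in $t$, a pair of independent one-variable hyperbolic identities. When $\epsilon = +1$, set $\lambda_1^+ = \cosh\alpha(s), \lambda_2^+ = \sinh\alpha(s)$ and $\lambda_1^- = \cosh\beta(t), \lambda_2^- = \sinh\beta(t)$ for smooth real functions $\alpha,\beta$; the combination $\psi = \frac{1+\sigma}{2}\alpha(s) + \frac{1-\sigma}{2}\beta(t)$ is conformal, and the identities (\ref{cos_sen_num_lorn}) give $\cosh\psi = \lambda_1$, $\sinh\psi = \lambda_2$, producing the first form of (\ref{nueva_carta_2}). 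When $\epsilon = -1$, the analogous hyperbolic parametrization with the roles of $\cosh$ and $\sinh$ swapped produces the second form.

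The main obstacle is the geometric computation in step two identifying the invariant $\Delta$ with the sign structure of the $\A$-valued quantity $\lambda_1^2 - \lambda_2^2$: this requires expressing $dG$ through the horizontal decomposition of Lemma \ref{distribucion}, projecting onto the self-dual/anti-self-dual pieces of $\Lambda^2\R^{2,2}$, and matching with the characteristic-coordinate splitting of $H(g',g')$. Everything else reduces to algebra in $\A$, systematically exploiting the idempotent decomposition $\A = \frac{1+\sigma}{2}\R \oplus \frac{1-\sigma}{2}\R$.
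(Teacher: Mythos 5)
Your proposal is correct and takes essentially the same route as the paper: conformality of $g$ from Lemma \ref{distribucion}, normalization of the chart by solving a first-order equation involving $H(g',g')$ in the idempotent splitting, identification of $\Delta>0$ with a common sign of the two split components of $H(g',g')$ via $H(dG,dG)=\langle dG,dG\rangle-\sigma\,dG\wedge dG$, and finally the hyperbolic parametrization of $g'g^{-1}$ using the $\frac{1\pm\sigma}{2}$ decomposition. The only cosmetic difference is that you normalize by post-composing with a conformal $f$ solving $(f')^2=\epsilon H(g',g')$, while the paper pre-composes with $\mu=f^{-1}$ solving $\mu'^2H(g',g')=\pm1$; these are equivalent.
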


\begin{proof}
Let $a:\U\subset\A \to M$ be a chart given by the Lorentz structure induced by $G$ and compatible with the orientation of $M$ (see Section \ref{appendix lorentz}). By Lemma \ref{distribucion}, $g:\U\to Spin(2,2)$ is a conformal map (since so are $G$ and $p$ in (\ref{gauss_conforme})). We consider $g':\U\rightarrow\mathbb{H}_0$ such that $dg=g'da$ (Section \ref{appendix lorentz}). If $\mu:\A\to\A$ is a conformal map, we have 
$$H((g\circ\mu)',(g\circ\mu)')=\mu'^2H(g',g').$$
 We observe that we may find $\mu$ such that 
 \begin{equation}\label{equation mu prime}
 \mu'^2H(g',g')=\pm1.
 \end{equation} Indeed, since $g$ is a conformal map, 
\begin{equation}\label{Hgpuv}
H(g',g')=\frac{1+\sigma}{2}h_1(s)+\frac{1-\sigma}{2}h_2(t) 
\end{equation}
for some functions $h_1$ and $h_2,$ where $s$ and $t\in\R$ are such that $a=\frac{1 +\sigma}{2}s+\frac{1 -\sigma}{2}t$ (see Section \ref{appendix lorentz}); we observe that $\Delta>0$ is equivalent to  $h_1(s)h_2(t)>0:$ by (\ref{dist_formula})-(\ref{gauss_conforme}),
$$H(dG,dG)=-4H(g',g')da^2=-2\left((h_1ds^2+h_2dt^2)+\sigma (h_1ds^2-h_2dt^2)\right);$$
since
$$H(dG,dG)=\langle dG,dG\rangle-\sigma\ dG\wedge dG$$
(see Appendix \ref{appendix H}), we deduce that
$$\delta:=\frac{1}{2}dG\wedge dG=h_1ds^2-h_2dt^2$$
and thus that the discriminant of $\delta$ has the sign of $-h_1h_2;$ the result follows since this sign is also the opposite of $\Delta$ (see Section \ref{section delta}). Setting
\begin{equation*}
\mu'=\frac{1+\sigma}{2}\frac{1}{\sqrt{|h_1|}}+\frac{1-\sigma}{2}\frac{1}{\sqrt{|h_2|}},
\end{equation*}
we have by (\ref{Hgpuv})
\begin{equation*}
\mu'^2H(g',g')= \frac{1+\sigma}{2}sign(h_1)+\frac{1-\sigma}{2}sign(h_2)=sign(h_1),
\end{equation*}
where $sign(h_1)$ is $+1$ if $h_1>0$ and is $-1$ if $h_1<0.$ We then define 
\begin{equation}\label{integracion}
\mu=\frac{1+\sigma}{2}\int_{s_0}^s \frac{1}{\sqrt{|h_1|}}ds+\frac{1-\sigma}{2}\int_{t_0}^{t}\frac{1}{\sqrt{|h_2|}}dt.
\end{equation} 
$\mu$ is clearly a diffeomorphism, and, considering $g\circ \mu$ instead of $g,$ we get a solution of (\ref{g normalized}). Since all the solutions of (\ref{equation mu prime}) preserving orientation are of the form $\pm \mu+b,$ $b\in\A,$ we also obtain the uniqueness of a solution up to the group $\mathbb{G}.$ We now prove the last claim of the theorem. Writing 
\begin{equation*}
g=\frac{1+\sigma}{2}g_1+\frac{1-\sigma}{2}g_2
\end{equation*} 
with $g_1=g_1(s)$ and $g_2=g_2(t)$ belonging to $\R\e\oplus i\R I\oplus\R J\oplus i\R K$ ($g$ is a conformal map) we get \begin{equation*}g'g^{-1}=\frac{1+\sigma}{2}g_1'g_1^{-1}+\frac{1-\sigma}{2}g_2'g_2^{-1}, \end{equation*}with 
$$H(g_1'g_1^{-1},g_1'g_1^{-1})=H(g_2'g_2^{-1},g_2'g_2^{-1})=\pm1.$$ 
Since $g_1'g_1^{-1}$ and $g_2'g_2^{-1}$ belong to $\R J\oplus i\R K$ (Lemma \ref{dg g inverse}), we deduce that 
\begin{equation*}g_1'g_1^{-1}=\cosh\psi_1 J+i\sinh\psi_1K\hspace{0.1in}\text{and}\hspace{0.1in}g_2'g_2^{-1}=\cosh\psi_2 J+i\sinh\psi_2K \end{equation*}or \begin{equation*}g_1'g_1^{-1}=\sinh\psi_1 J+i\cosh\psi_1K \hspace{0.1in}\text{and}\hspace{0.1in} g_2'g_2^{-1}=\sinh\psi_2 J+i\cosh\psi_2K, \end{equation*} 
for $\psi_1=\psi_1(s)$ and $\psi_2=\psi_2(t)\in\R.$ The function 
\begin{equation*}
\psi:=\frac{1+\sigma}{2}\psi_1(s)+\frac{1-\sigma}{2}\psi_2(t)
\end{equation*} 
satisfies (\ref{nueva_carta_2}). 
\end{proof}

We now study the metric of the surface in the special chart $a=u+\sigma v$ adapted to $g,$ given by Theorem \ref{nueva_carta}. We recall that $(e_0,e_1)$ and $(e_2,e_3)$ are the parallel, orthonormal and positively oriented frames, respectively normal, and tangent to $M,$ corresponding to $\tilde{s}.$ Let us write \begin{equation*}
\vec{H}=H_0e_0+H_1e_1.
\end{equation*}
We also consider the tangent lightlike vectors 
$$N_1:=\frac{e_2+e_3}{\sqrt{2}}\hspace{.5cm}\mbox{and}\hspace{.5cm}N_2:=\frac{e_3-e_2}{\sqrt{2}};$$ 
they are such that $\la N_1,N_2\ra=1.$ Finally, we consider the conformal map $\psi:\U\subset\A\to\A$ defined in Theorem \ref{nueva_carta} above and write $\psi\ =\theta_1 + \sigma \theta_2,$ with $\theta_1,\theta_2$ real-valued functions.
\begin{lem}\label{caso_1}We have 
\begin{equation}\label{marco_caso_1} 
N_1=\pm\frac{e^{\theta_1}}{\sqrt{2}}\left( \frac{1}{\lambda}\partial_u+\frac{1}{\mu}\partial_v\right)\hspace{.5cm} \mbox{and}\hspace{.5cm}
N_2=\frac{e^{-\theta_1}}{\sqrt{2}}\left(\frac{1}{\lambda}\partial_u-\frac{1}{\mu}\partial_v\right)
\end{equation}
where $\lambda,\mu\in\R^*$ satisfy
\begin{equation}\label{coordenadas_vector_normal}
\begin{pmatrix}
\frac{1}{\mu} \\
\frac{1}{\lambda}
\end{pmatrix}=-\begin{pmatrix}
\cosh\theta_2 & \sinh\theta_2 \\
\sinh\theta_2 & \cosh\theta_2
\end{pmatrix}\begin{pmatrix}
H_0 \\ H_1
\end{pmatrix}. 
\end{equation}
\end{lem}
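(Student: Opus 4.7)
The plan is to exploit the Killing-type equation (\ref{killing local description}) satisfied by $\varphi$ in order to recover the coordinates of $\partial_u,\partial_v$ in the parallel orthonormal frame $(e_2,e_3)$ of $TM$. First I would write $\partial_u=U^2e_2+U^3e_3$ and $\partial_v=V^2e_2+V^3e_3$ with unknown real functions $U^j,V^j$. Since $\tilde{s}$ is parallel, (\ref{killing local description}) translates into the purely algebraic identity $dg(X)\,g^{-1}=[\eta(X)]$ for every $X\in TM$, and Theorem \ref{nueva_carta} provides, in the chart $a=u+\sigma v$,
\[
dg(\partial_u)\,g^{-1}=g'g^{-1}=\cosh\psi\,J+i\sinh\psi\,K,\qquad dg(\partial_v)\,g^{-1}=\sigma\,g'g^{-1},
\]
with $\psi=\theta_1+\sigma\theta_2$ (in the first case of (\ref{nueva_carta_2}); the second case is treated symmetrically and will produce the lower sign on $N_1$).

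Next I would compute the right-hand side $[\eta(X)]$ from (\ref{special form eta}) using the Clifford products $e_j\cdot e_k$ read off from (\ref{aplicliff}) and (\ref{description Cl22}), namely
\[
e_2\cdot e_0=\sigma J,\quad e_2\cdot e_1=-iK,\quad e_3\cdot e_0=-\sigma iK,\quad e_3\cdot e_1=J
\]
in $Cl_0(2,2)\simeq\mathbb{H}_0$. Decomposing $B(e_i,e_j)=B_{ij}^0 e_0+B_{ij}^1 e_1$, these formulas express $[\eta(X)]$ as an $\A$-linear combination of $J$ and $iK$ whose coefficients depend linearly on the $U^j$'s (resp.\ $V^j$'s) and on the $B_{ij}^k$'s. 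Splitting $\cosh\psi$ and $\sinh\psi$ into $\R$- and $\sigma\R$-parts via (\ref{cos_sen_num_lorn}) and equating coefficients of $J$ and $iK$ in each part separately yields eight scalar equations. The trace constraint $\vec{H}=\frac{1}{2}tr_gB$ further imposes $B_{33}^k=B_{22}^k+2H_k$ for $k=0,1$, so the unknowns reduce to $U^2,U^3,V^2,V^3$ together with $B_{22}^0,B_{22}^1,B_{23}^0,B_{23}^1$.

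The final step is to solve this $8\times 8$ linear system. Direct elimination yields $U^2=-\lambda\sinh\theta_1$, $U^3=\lambda\cosh\theta_1$, $V^2=\mu\cosh\theta_1$, $V^3=-\mu\sinh\theta_1$ for some $\lambda,\mu\in\R^*$, and inverting these relations to write $N_1=(e_2+e_3)/\sqrt{2}$ and $N_2=(e_3-e_2)/\sqrt{2}$ in terms of $\partial_u,\partial_v$ produces (\ref{marco_caso_1}). Substituting the explicit $B_{22}^k,B_{33}^k$ obtained from the system into $B_{33}^k-B_{22}^k=2H_k$ then gives
\[
H_0=\frac{\sinh\theta_2}{\lambda}-\frac{\cosh\theta_2}{\mu},\qquad H_1=-\frac{\cosh\theta_2}{\lambda}+\frac{\sinh\theta_2}{\mu},
\]
and inverting the resulting $2\times 2$ hyperbolic matrix (of determinant $-1$) recovers exactly (\ref{coordenadas_vector_normal}). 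The main obstacle is just the bookkeeping: conceptually the whole content of the lemma is the translation of the Killing equation through the normal form (\ref{nueva_carta_2}) of Theorem \ref{nueva_carta}, but the eight equations must be handled carefully to track signs and to separate the roles of $\theta_1$ (which governs the angle between $(e_2,e_3)$ and the chart basis) and $\theta_2$ (which, together with $\lambda,\mu$, encodes $\vec{H}$).
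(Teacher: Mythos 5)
Your route is conceptually aligned with the paper's -- both translate the equation on $\varphi$ into an algebraic identity for $g$ in the normalized chart and read off the components of $e_2,e_3$ -- but the paper uses the \emph{Dirac} equation, while you use the full \emph{Killing} equation, and this difference is not merely a matter of bookkeeping.

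The paper writes the Dirac equation $D\varphi=\vec H\cdot\varphi$ in the parallel frame $\tilde s$: with $[e_2]=iJ$, $[e_3]=K$, $[\vec H]=\sigma iH_0\e+H_1I$ and $[\varphi]=g$, it becomes (after cancellations) the $\A$-valued equation
\[
J\,dg(e_2)+iK\,dg(e_3)=(\sigma H_0\e+iH_1I)\,g.
\]
Since $dg(e_j)g^{-1}=g'g^{-1}\underline{e_j}$ and $g'g^{-1}$ is known by (\ref{nueva_carta_2}), this is a $2\times 2$ system over $\A$ -- i.e., four real scalar equations -- that is \emph{linear} in the four real unknowns contained in $\underline{e_2},\underline{e_3}\in\A$. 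Inverting the hyperbolic matrix (determinant $1$) gives $\underline{e_2},\underline{e_3}$ directly, and separating the $1$- and $\sigma$-components of $\psi$ yields $\lambda,\mu$ and (\ref{coordenadas_vector_normal}). The second fundamental form $B$ never appears.

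Your proposal, by contrast, starts from the Killing equation (\ref{killing local description}), introduces the four components of $B$ as additional unknowns, and equates $[\eta(\partial_u)],[\eta(\partial_v)]$ with $g'g^{-1},\sigma g'g^{-1}$. Because $\eta(\partial_u)=-\tfrac12\sum_j\epsilon_je_j\cdot B(U^2e_2+U^3e_3,e_j)$ involves the products $U^kB^l_{ij}$, the resulting system is \emph{bilinear} in the pair of unknowns $(U,V)$ and $B$, not linear, so the phrase ``solving this $8\times 8$ linear system'' is incorrect. A bilinear $8\times 8$ system need not admit elimination in the way you assume, and the assertion ``direct elimination yields $U^2=-\lambda\sinh\theta_1,\dots$'' is stated rather than derived -- it is precisely the substance of the lemma. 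In addition, the last step has a logical wrinkle: you use the trace constraint $B^k_{33}-B^k_{22}=2H_k$ to reduce to eight unknowns, and then ``substitute back'' into the same constraint, which alone would be a tautology; what actually pins down $H_0,H_1$ is solving the bilinear system, which you have not done. Your final formulas for $H_0,H_1$ and the inversion to (\ref{coordenadas_vector_normal}) are correct, so the answer is right, but the derivation has a genuine gap. The remedy, which is exactly what the paper does, is to take the trace first (Dirac equation), which kills the $B$-dependence and collapses everything to a linear system in $\underline{e_2},\underline{e_3}$ alone.
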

\begin{proof}
In the chart $a:\U\subset\A \to M$ introduced above, $e_2,e_3$ are represented by two functions $\underline{e_2},\underline{e_3}:\U\subset\A \to \A.$ In $\tilde{s},$ the Dirac equation (\ref{eqn dirac th}) reads 
\begin{equation*}
-[e_2]\widehat{[\nabla_{e_2}\varphi]}+[e_3]\widehat{[\nabla_{e_3}\varphi]}= [\vec{H}]\widehat{[\varphi]},
\end{equation*}
that is, recalling Section \ref{section notation},
\begin{equation*}
Jdg(e_2)+iKdg(e_3)=(\sigma H_0\e+iH_1I)g;
\end{equation*}
since $dg(e_2)g^{-1}=g'g^{-1}\underline{e_2}$ and $dg(e_3)g^{-1}=g'g^{-1}\underline{e_3}$ and using the first or the second identity in (\ref{nueva_carta_2}), this may be written 
\begin{equation*}
-\begin{pmatrix}\cosh\psi & \sinh\psi\\ \sinh\psi & \cosh\psi \end{pmatrix}\begin{pmatrix}\sigma H_0 \\ H_1\end{pmatrix}=\begin{pmatrix}\underline{e_2}\\ \underline{e_3}\end{pmatrix}
\hspace{0.1in}\text{or}\hspace{0.1in}
\begin{pmatrix}\sinh\psi & \cosh\psi\\ \cosh\psi & \sinh\psi \end{pmatrix}\begin{pmatrix}\sigma H_0 \\ H_1\end{pmatrix}=\begin{pmatrix}\underline{e_2}\\ \underline{e_3}\end{pmatrix}.
\end{equation*} 
Setting $c:=-H_0\sinh\theta_2-H_1\cosh\theta_2$ and $d:=-H_0\cosh\theta_2-H_1\sinh\theta_2,$ these identities read
\begin{equation*}
\begin{cases}
\underline{e_2}=c\sinh\theta_1+\sigma d\cosh\theta_1\\
\underline{e_3}=c\cosh\theta_1+\sigma d\sinh\theta_1
\end{cases}
\hspace{0.1in}\text{or}\hspace{0.1in}
\begin{cases}
\underline{e_2}=-c\cosh\theta_1-\sigma d\sinh\theta_1 \\
\underline{e_3}=-c\sinh\theta_1-\sigma d\cosh\theta_1
\end{cases}
\end{equation*}
(recall (\ref{cos_sen_num_lorn})). Since $\underline{e_2}$ and $\underline{e_3}$ represent the independent vectors $e_2,e_3,$ we have $cd\neq0;$ setting $\lambda=\frac{1}{c}$ and $\mu=\frac{1}{d},$ we finally easily get (\ref{marco_caso_1}) and (\ref{coordenadas_vector_normal}).  
\end{proof}
\begin{prop}\label{prop metric}
In the chart $a=u+\sigma v$ of Theorem \ref{nueva_carta}, the metric reads 
\begin{equation}\label{metrica_caso_1}
\pm(\lambda^2du^2-\mu^2dv^2);
\end{equation}
moreover, $\lambda$ and $\mu$ are solutions of the hyperbolic system
\begin{equation}\label{sistema_caso_1}
\begin{cases}
\partial_u\mu=-\lambda\ \partial_u\theta_2\\
\partial_v\lambda=-\mu\ \partial_v\theta_2.
\end{cases}
\end{equation}
\end{prop}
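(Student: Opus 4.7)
My plan is to express the coordinate vector fields $\partial_u, \partial_v$ in the parallel tangent frame $(e_2, e_3)$ and then to extract both the metric formula and the hyperbolic system by direct computation. I recall first that $(e_2, e_3) = \pi(\tilde{s})|_{TM}$ comes from the globally parallel spinorial frame $\tilde{s}$; since $K = K_N = 0$, these vector fields are parallel for the Levi-Civita connection, i.e.\ $\nabla_X e_2 = \nabla_X e_3 = 0$ for every $X \in TM$. This parallelism is the central geometric fact I will use.

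To compute the metric, I will invert the relations from the proof of Lemma \ref{caso_1}. In case 1, $\underline{e_2} = c\sinh\theta_1 + \sigma d\cosh\theta_1$ and $\underline{e_3} = c\cosh\theta_1 + \sigma d\sinh\theta_1$ (with $c=1/\lambda$, $d=1/\mu$) correspond to $e_2 = c\sinh\theta_1\,\partial_u + d\cosh\theta_1\,\partial_v$ and $e_3 = c\cosh\theta_1\,\partial_u + d\sinh\theta_1\,\partial_v$ under the identification $\alpha + \sigma\beta \leftrightarrow \alpha\partial_u + \beta\partial_v$. Inverting the $2\times 2$ matrix (determinant $-cd$), one finds
\begin{align*}
\partial_u &= \lambda\bigl(-\sinh\theta_1\, e_2 + \cosh\theta_1\, e_3\bigr), \\
\partial_v &= \mu\bigl(\cosh\theta_1\, e_2 - \sinh\theta_1\, e_3\bigr),
\end{align*}
and a direct computation using $\la e_2,e_2\ra = -1$, $\la e_3,e_3\ra = 1$, $\la e_2,e_3\ra = 0$ together with $\cosh^2\theta_1 - \sinh^2\theta_1 = 1$ then gives $\la \partial_u,\partial_u\ra = \lambda^2$, $\la \partial_v,\partial_v\ra = -\mu^2$ and $\la \partial_u,\partial_v\ra = 0$. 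An analogous calculation in case 2 produces the opposite overall sign, yielding formula (\ref{metrica_caso_1}).

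For the system (\ref{sistema_caso_1}), I will exploit the identity $\nabla_{\partial_u}\partial_v = \nabla_{\partial_v}\partial_u$ (torsion-freeness combined with $[\partial_u,\partial_v]=0$); since $e_2, e_3$ are parallel, both sides are obtained simply by differentiating the coefficients above. Setting $X := \partial_v\lambda + \mu\,\partial_v\theta_2$ and $Y := \partial_u\mu + \lambda\,\partial_u\theta_2$, and using the Cauchy--Riemann-type identities $\partial_u\theta_1 = \partial_v\theta_2$ and $\partial_v\theta_1 = \partial_u\theta_2$ (a consequence of $\psi = \theta_1 + \sigma\theta_2$ being conformal, cf.\ Section \ref{appendix lorentz}), the $e_2$- and $e_3$-components of this identity collapse respectively to
\[
\sinh\theta_1 \cdot X + \cosh\theta_1 \cdot Y = 0 \quad \text{and} \quad \cosh\theta_1 \cdot X + \sinh\theta_1 \cdot Y = 0.
\]
The determinant of the coefficient matrix is $\sinh^2\theta_1 - \cosh^2\theta_1 = -1 \neq 0$, whence $X = Y = 0$, which is exactly (\ref{sistema_caso_1}); case 2 yields the same system up to signs.

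The main obstacle is organisational rather than conceptual: the two cases in the definition of $\psi$ from Theorem \ref{nueva_carta} must both be handled, and care is needed to verify that they lead to the same hyperbolic system. The essential non-trivial input, beyond bookkeeping, is the Cauchy--Riemann-type structure of $\psi$ viewed as a conformal $\A$-valued map, without which the $\theta_1$-dependent terms would not decouple into a system purely in $\lambda$, $\mu$ and $\theta_2$.
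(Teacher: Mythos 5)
Your proof is correct, and the strategy is essentially the one the paper uses: both exploit the parallelism of the adapted tangent frame coming from the flat spinorial frame $\tilde{s}$, together with the coordinate expressions established in Lemma \ref{caso_1}. The minor variations in route are worth noting: for the metric, you invert the change of basis to express $\partial_u,\partial_v$ in terms of $(e_2,e_3)$ and use the orthonormality of that frame directly, whereas the paper works with the lightlike pair $(N_1,N_2)$ and the relation $P^tAP=\begin{pmatrix}0&1\\1&0\end{pmatrix}$; these are equivalent. For the system, you impose $\nabla_{\partial_u}\partial_v=\nabla_{\partial_v}\partial_u$ together with $\nabla e_2=\nabla e_3=0$, while the paper computes the Christoffel symbols of the metric (\ref{metrica_caso_1}) and then imposes that $(N_1,N_2)$ is parallel. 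Your version is slightly more economical, since it bypasses the Christoffel symbols. One genuine improvement in your write-up: you make explicit the Cauchy--Riemann-type relations $\partial_u\theta_1=\partial_v\theta_2$, $\partial_v\theta_1=\partial_u\theta_2$ coming from the conformality of $\psi$; these are also needed (implicitly) in the paper's computation to pass from $\theta_1$-derivatives to the $\theta_2$-derivatives appearing in (\ref{sistema_caso_1}), and spelling them out clarifies why the system closes up in $\lambda,\mu,\theta_2$ alone.
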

\begin{proof}
Let $A$ be the matrix of the metric in the basis $(\partial_u,\partial_v).$ If we denote by 
\begin{equation*}
P=\frac{1}{\sqrt{2}}\begin{pmatrix}\pm\frac{e^{\theta_1}}{\lambda} & \frac{e^{-\theta_1}}{\lambda}\\ \pm\frac{e^{\theta_1}}{\mu} & -\frac{e^{-\theta_1}}{\mu} \end{pmatrix}
\end{equation*} 
the matrix representing the vectors $N_1,N_2$ in $(\partial_u,\partial_v)$  (see (\ref{marco_caso_1})), then, since $|N_1|^2=|N_2|^2=0$ and $\la N_1,N_2\ra=1,$ we have  
$$\begin{pmatrix}0 & 1 \\ 1 & 0\end{pmatrix}=P^tAP;$$ 
thus 
\begin{equation*}
A=\pm \begin{pmatrix}\lambda^2 & 0 \\ 0 & -\mu^2 \end{pmatrix},
\end{equation*} 
which is (\ref{metrica_caso_1}). We then compute the Christoffel symbols of this metric using the Christoffel formulas, and easily get 
\begin{equation*}
\Gamma_{uu}^u=\frac{1}{\lambda}\partial_u\lambda,\hspace{0.2in} \Gamma_{vu}^u=\frac{1}{\lambda}\partial_v\lambda,\hspace{0.2in} \Gamma_{uv}^v=\frac{1}{\mu}\partial_u\mu,\hspace{0.2in} \Gamma_{vv}^v=\frac{1}{\mu}\partial_v\mu 
\end{equation*} 
and
\begin{equation*}
\Gamma_{uu}^v=\frac{\lambda}{\mu^2}\partial_v\lambda,\hspace{0.2in} \Gamma_{vv}^u=\frac{\mu}{\lambda^2}\partial_u\mu. 
\end{equation*} 
Writing finally that $(N_1,N_2),$ given by (\ref{marco_caso_1}), is parallel with respect to the metric (\ref{metrica_caso_1}) (since so is $(e_2,e_3)$), we easily get (\ref{sistema_caso_1}).
\end{proof}
We now state the main result of the section:
\begin{thm}\label{descripcion_1}
Let $\psi:\U\subset\A\to \A$ be a conformal map, and $\theta_1,\theta_2:\U\to\R$ be such that $\psi=\theta_1+\sigma\theta_2;$ suppose that $\lambda$ and $\mu$ are solutions of (\ref{sistema_caso_1}) such that $\lambda\mu\neq0,$ and define \begin{equation}\label{marco1_teorema}
\underline{N_1}=\pm\frac{e^{\theta_1}}{\sqrt{2}}\left( \frac{1}{\lambda}+\sigma\frac{1}{\mu}\right)\hspace{0.2in}\text{and}\hspace{0.2in}
\underline{N_2}=\frac{e^{-\theta_1}}{\sqrt{2}}\left(\frac{1}{\lambda}-\sigma\frac{1}{\mu}\right).
\end{equation}Then, if $g:\U\to Spin(2,2)\subset\mathbb{H}_0$ is a conformal map solving \begin{equation}\label{gp theorem}
g'g^{-1}=\cosh\psi J+i\sinh\psi K\hspace{0.2in}\text{or}\hspace{0.2in} g'g^{-1}=\sinh\psi J+i\cosh\psi K,
\end{equation}
and if we set 
\begin{equation}\label{inmersion1_teorema}
\xi:=ig^{-1}\left[\frac{w_2-w_1}{\sqrt{2}}J+\frac{w_2+w_1}{\sqrt{2}}iK\right]\hat{g}
\end{equation} where $w_1,w_2:T\U\to\R$ are the dual forms of $\underline{N_1},\underline{N_2}\in\Gamma(T\U),$ the function $F=\int\xi$ defines a Lorentzian immersion $\U\to\R^{2,2}$ with $K=K_N=0$ and $\Delta>0.$ Reciprocally, the Lorentzian immersions of $M$ into $\R^{2,2}$ such that $K=K_N=0,$ $\Delta>0$ and with regular Gauss map are locally of this form.
\end{thm}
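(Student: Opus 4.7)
My plan is to prove the two implications separately: the converse follows by reassembling the preceding results, while the direct implication uses Theorem \ref{thm_prin} once the correct spinorial data have been constructed.

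For the converse, suppose $F:M\to\R^{2,2}$ is a Lorentzian immersion with $K=K_N=0$, $\Delta>0$ and regular Gauss map. Theorem \ref{nueva_carta} gives the chart $a=u+\sigma v$ and the conformal map $\psi=\theta_1+\sigma\theta_2$ with $g'g^{-1}$ of the form (\ref{gp theorem}); Proposition \ref{prop metric} supplies the metric (\ref{metrica_caso_1}) and the hyperbolic system (\ref{sistema_caso_1}); and Lemma \ref{caso_1} gives the null frame (\ref{marco_caso_1}) together with the relation (\ref{coordenadas_vector_normal}). Since $F=\int\xi$ with $\xi(X)=\la\la X\cdot\varphi,\varphi\ra\ra$ (Theorem \ref{thm_prin}), it remains to write this out in the parallel spinorial frame $\tilde s$: using $[\varphi]=g$, the pairing (\ref{prod_escal_vector}) and $[X\cdot\varphi]=\sigma i[X]\hat g$ one obtains $\xi(X)=-g^{-1}[X]\hat g$, and decomposing $X=w_1(X)N_1+w_2(X)N_2=\frac{w_1-w_2}{\sqrt 2}e_2+\frac{w_1+w_2}{\sqrt 2}e_3$ with $[e_2]=iJ$, $[e_3]=K$ recovers (\ref{inmersion1_teorema}).

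For the direct direction, I equip $\mathcal U\subset\mathcal A$ with the Lorentzian metric (\ref{metrica_caso_1}) and consider the tangent frame $(e_2,e_3):=((N_1-N_2)/\sqrt 2,(N_1+N_2)/\sqrt 2)$, with $N_1,N_2$ given by (\ref{marco1_teorema}); the hyperbolic system (\ref{sistema_caso_1}) is precisely the content of the Christoffel computation in Proposition \ref{prop metric}, so it is equivalent to $(e_2,e_3)$ being parallel for the Levi-Civita connection, and in particular $K=0$. I take $E=\mathcal U\times\R^{1,1}$ with the flat connection and a global orthonormal frame $(e_0,e_1)$, so $K_N=0$; I lift $(e_0,e_1,e_2,e_3)$ to a global parallel spinorial frame $\tilde s$, define $\varphi\in\Gamma(\Sigma)$ by $[\varphi]=g$ in $\tilde s$ (so $H(\varphi,\varphi)=1$ since $g\in Spin(2,2)$), and set $\vec H:=H_0e_0+H_1e_1$ with $(H_0,H_1)$ prescribed by (\ref{coordenadas_vector_normal}). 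The key calculation is to check $D\varphi=\vec H\cdot\varphi$: since $\tilde s$ is parallel, $[\nabla_X\varphi]=dg(X)=g'\underline X$, and (\ref{gp theorem}) yields $dg(X)\,g^{-1}=(\cosh\psi\,J+i\sinh\psi\,K)\underline X$; expanding the even Clifford products $[e_i\cdot e_j]$ in the basis $J,iK,\sigma J,\sigma iK$ of $\mathbb{H}_0$ and evaluating at $X=e_2,e_3$ lets one read off a symmetric $B:TM\times TM\to E$ such that $\eta(X)=-\frac{1}{2}\sum_j\epsilon_je_j\cdot B(X,e_j)$, and a direct trace gives $\frac{1}{2}tr_g B=\vec H$ precisely because of (\ref{coordenadas_vector_normal}). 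Theorem \ref{thm_prin} then produces the Lorentzian immersion $F=\int\xi$; the identification with (\ref{inmersion1_teorema}) is the same coordinate computation as in the converse; and $\Delta>0$ is automatic since $H(g'g^{-1},g'g^{-1})=\pm 1\in\R$ in both cases of (\ref{gp theorem}), which forces $h_1h_2>0$ in the sign analysis from the proof of Theorem \ref{nueva_carta}.

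I expect the principal obstacle to be the algebraic matching in the direct direction: translating the quaternionic equality $dg\,g^{-1}=(\cosh\psi\,J+i\sinh\psi\,K)\,da$ into the intrinsic form $\eta(X)=-\frac{1}{2}\sum\epsilon_je_j\cdot B(X,e_j)$ requires handling the $\mathcal A$-module structure together with the Clifford products $e_j\cdot e_k$ simultaneously, and pinning down the identification of the components of $B$ and of $\vec H$; all the remaining steps are either direct applications of the preceding lemmas or routine in light of them.
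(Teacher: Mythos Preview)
Your proposal is correct and follows essentially the same route as the paper: build the flat metric on $\mathcal U$ from $(\lambda,\mu)$, take the trivial normal bundle, define $\varphi$ by $[\varphi]=g$ in a parallel spinorial frame, verify the Dirac equation with $\vec H$ determined by (\ref{coordenadas_vector_normal}), and then invoke Theorem~\ref{thm_prin}; the converse is likewise the reassembly of Theorem~\ref{nueva_carta}, Lemma~\ref{caso_1} and Proposition~\ref{prop metric} together with the explicit coordinate formula for $\xi$.

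The one place where your argument is slightly more roundabout than the paper's is the verification of $D\varphi=\vec H\cdot\varphi$. You propose to read off a bilinear map $B$ from $dg\,g^{-1}$ and then trace; but tracing the Killing form only gives the Dirac equation once you know the off-diagonal terms $B(e_2,e_3)$ and $B(e_3,e_2)$ coincide, and you assert this symmetry without checking it. It does hold (it reduces to the $\sigma$-part of the identity $\cosh^2\psi-\sinh^2\psi=1$), but the paper avoids this detour entirely: it simply observes that the computation in the proof of Lemma~\ref{caso_1} is reversible, so that (\ref{gp theorem}) together with the frame (\ref{marco1_teorema}) and the definition (\ref{coordenadas_vector_normal}) of $\vec H$ is \emph{equivalent} to the Dirac equation in the parallel frame $\tilde s$. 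That is what ``by construction'' means here, and it is the cleaner way to close the argument.
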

\begin{proof}We first prove the direct statement. We consider the metric on $\U$ such that the vectors $N_1\simeq \underline{N_1}, N_2\simeq \underline{N_2}\in \Gamma(T\U)$ defined by (\ref{marco1_teorema}) form a frame of lightlike vectors of $T\U$ such that $\la N_1,N_2\ra=1:$ this is the metric (\ref{metrica_caso_1}). Since $(\lambda,\mu)$ is a solution of (\ref{sistema_caso_1}), the frame $(N_1,N_2)$ is parallel, and the metric on $\U$ is flat. We also consider the trivial bundle $E=\R^{1,1}\times\U$ with its trivial metric and its trivial connection: the canonical basis $(e_0,e_1)$ of $\R^{1,1}$ defines orthonormal and parallel sections of $E.$ We moreover define $e_2:=\frac{N_1-N_2}{\sqrt{2}},\ e_3:=\frac{N_1+N_2}{\sqrt{2}},$ parallel and orthogonal frame with $\la e_2,e_2\ra=-1$ and $\la e_3,e_3\ra=1.$ We write $s=(e_0,e_1,e_2,e_3)\in\Q=(SO(1,1)\times SO(1,1))\times\U,$ and fix $\tilde{s}\in\tilde{\Q}=S^1_{\A}\times\U$ such that $\pi(\tilde{s})=s,$ where $\pi:\tilde{Q}\to \Q$ is the natural double covering. We then consider $\varphi\in\Sigma=\tilde{\Q}\times\mathbb{H}_0/\rho$ such that $[\varphi]=g$ in $\tilde{s}.$ By construction (equations (\ref{gp theorem})), $\varphi$ is a solution of the Dirac equation $D\varphi=\vec{H}\cdot\varphi,$ where $\vec{H}=H_0e_0+H_1e_1$ is defined by (\ref{coordenadas_vector_normal}). Moreover, the form defined by (\ref{inmersion1_teorema}) is such that $\xi=\la\la X\cdot\varphi,\varphi\ra\ra;$ this is thus a closed 1-form, and $F=\int\xi$ is an isometric immersion of $M$ into $\R^{2,2}$ whose normal bundle identifies to $E.$ Thus it is a flat immersion in $\R^{2,2},$ with flat normal bundle; moreover $\Delta>0,$ as it is easily seen using the criterion in the proof of Theorem \ref{nueva_carta} ($H(g',g')=\pm 1$ by (\ref{gp theorem}), that is $h_1=h_2=\pm 1$ in (\ref{Hgpuv})). 

Reciprocally, if $F:M\to \R^{2,2}$ is the immersion of a flat Lorentzian surface with flat normal bundle, $\Delta>0,$ and regular Gauss map, we have \begin{equation}
F=\int\xi,\hspace{0.2in}\text{with}\hspace{0.2in}\xi(X)=\la\la X\cdot\varphi,\varphi\ra\ra, 
\end{equation} 
where $\varphi$ is the restriction to $M$ of the constant spinor field $\sigma \e$ of $\R^{2,2}.$ In a parallel frame $\tilde{s},$ we have $\varphi=[\tilde{s},g],$ where $g:M\to Spin(2,2)\subset\mathbb{H}_0$ is an horizontal and conformal map (Lemma \ref{distribucion} and Theorem \ref{nueva_carta}). In a chart compatible with the Lorentz structure induced by the Gauss map and adapted to $g$ (Theorem \ref{nueva_carta}), $\xi$ is of the form (\ref{inmersion1_teorema}) where $(w_1,w_2)$ is the dual basis of the basis defined by (\ref{marco1_teorema}) and where in this last expression $(\lambda,\mu)$ are solutions of (\ref{sistema_caso_1}).
\end{proof}
\begin{coro}\label{4 functions}
A flat Lorentzian surface with flat normal bundle, regular Gauss map and such that $\Delta>0$ locally depends on 4 real functions of one real variable.
\end{coro}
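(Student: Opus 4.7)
The plan is to invoke the reciprocal part of Theorem \ref{descripcion_1} and count the functional freedom in the data $(\psi,\lambda,\mu)$ parametrising the immersion, modulo the chart group $\mathbb{G}$ of Theorem \ref{nueva_carta} and the rigid motions of $\R^{2,2}.$ By Theorem \ref{descripcion_1}, every flat Lorentzian immersion with flat normal bundle, regular Gauss map and $\Delta>0$ is locally described, in a chart $a=u+\sigma v$ adapted to $g,$ by a conformal map $\psi=\theta_1+\sigma\theta_2:\U\subset\A\to\A,$ a pair $(\lambda,\mu)$ of nowhere-vanishing solutions of the hyperbolic system (\ref{sistema_caso_1}), a horizontal lift $g:\U\to Spin(2,2)$ of $G$ solving (\ref{gp theorem}), and the explicit formula (\ref{inmersion1_teorema}).

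First I would count the freedom in $\psi$: by the discussion in Section \ref{appendix lorentz}, in the null coordinates $(s,t)$ introduced in (\ref{def s t}), a conformal map $\psi:\U\to\A$ is equivalent to a pair of real-valued functions $\psi_1(s),\psi_2(t)$ of one real variable. Hence $\psi$ contributes exactly two real functions of one variable. Next, once $\psi$ (and therefore $\theta_2$) is fixed, the system (\ref{sistema_caso_1}) is linear in $(\lambda,\mu)$ with characteristic foliations $\{v=\text{const}\}$ and $\{u=\text{const}\}.$ The natural Cauchy problem is of Goursat type: prescribe $\lambda(\cdot,v_0)$ along $\{v=v_0\}$ and $\mu(u_0,\cdot)$ along $\{u=u_0\}.$ Integrating the first equation in $u$ and the second in $v$ turns the system into a linear Volterra fixed-point problem on a rectangle around $(u_0,v_0),$ which admits a unique smooth solution by the standard Picard iteration; the open condition $\lambda\mu\neq 0$ is preserved on a neighbourhood by starting from nowhere-vanishing data. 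This contributes two more real functions of one variable.

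The remaining pieces of data carry only finite-dimensional freedom. Once $\psi$ is fixed, the conformality of $g$ together with equation (\ref{gp theorem}) prescribes $\partial_u g$ and $\partial_v g$ as linear functions of $g,$ and a direct verification shows that the compatibility $\partial_v\partial_u g=\partial_u\partial_v g$ reduces to $\partial_v\psi=\sigma\partial_u\psi,$ which is nothing but the conformality of $\psi.$ Thus $g$ is determined by $\psi$ up to left-multiplication by a constant element of $Spin(2,2),$ and the primitive $F=\int\xi$ is determined up to a translation in $\R^{2,2}.$ Together with the chart ambiguity $\mathbb{G}$ from Theorem \ref{nueva_carta}, all these residual symmetries are finite-dimensional and do not affect the functional count; the total is therefore $2+2=4$ real functions of one real variable.

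The main obstacle I anticipate is making this count genuinely \emph{sharp}, namely verifying that distinct Goursat data, modulo the finite-dimensional symmetries above, produce inequivalent immersions. For this one uses the uniqueness clause in Theorem \ref{nueva_carta} to recover the chart (and hence the data $\psi$ together with the initial traces of $\lambda$ and $\mu$) from the immersion, up to $\mathbb{G}$ and rigid motions of $\R^{2,2}$; the remaining injectivity reduces to Goursat uniqueness. Everything else is a direct invocation of Theorem \ref{descripcion_1} and the classical well-posedness theory for first-order linear hyperbolic systems with characteristic data.
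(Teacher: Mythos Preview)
Your argument is correct and follows the paper's strategy: two functions from the conformal map $\psi=\frac{1+\sigma}{2}\psi_1(s)+\frac{1-\sigma}{2}\psi_2(t)$, and two more from the well-posedness of the linear hyperbolic system (\ref{sistema_caso_1}) for $(\lambda,\mu)$. The only difference is that the paper passes to the null coordinates $(s,t)$, rewrites (\ref{sistema_caso_1}) as a standard first-order hyperbolic system, and solves the Cauchy problem with data $\lambda(0,t),\mu(0,t)$ on $\{s=0\}$, whereas you stay in $(u,v)$ and solve the Goursat problem with characteristic data; both yield the same functional count, and your extra paragraph on the finite-dimensional residual freedom in $g$ and $F$ (not spelled out in the paper) is a welcome clarification. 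One small slip: since (\ref{gp theorem}) reads $g'=A(\psi)\,g$, the ambiguity in $g$ is \emph{right}-multiplication by a constant of $Spin(2,2)$, not left, in agreement with Corollary \ref{two steps integration}.
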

\begin{proof}
We first note that the function $\psi$ in Theorem \ref{descripcion_1} depends on two functions of one variable: since $\psi:\A\to\A$ is a conformal map, writing  
\begin{equation*}
\psi:=\frac{1+\sigma}{2}\psi_1+\frac{1-\sigma}{2}\psi_2
\end{equation*} we have $\psi_1=\psi_1(s)$ and $\psi_2=\psi_2(t),$ where the coordinates $(s,t)$ are defined in (\ref{def s t}). We then write the system (\ref{sistema_caso_1}) in the coordinates $(s,t)$ and get 
\begin{equation}\label{sistema_caso_1_coord} 
\partial_s \begin{pmatrix}\lambda\\ \mu \end{pmatrix}=\begin{pmatrix}+1 & 0\\0 & -1\end{pmatrix}\partial_t\begin{pmatrix}\lambda\\ \mu \end{pmatrix}-\frac{1}{2} \begin{pmatrix}0 & \psi_1'+\psi_2'\\ \psi_1'-\psi_2' & 0 \end{pmatrix}\begin{pmatrix}\lambda\\ \mu \end{pmatrix};
\end{equation}
this is an hyperbolic system, and we may solve a Cauchy problem: once $\psi_1$ and $\psi_2$ are given, a solution of (\ref{sistema_caso_1_coord}) depends on two functions $\mu(0,t),\lambda(0,t)$ of the variable $t.$ By Theorem \ref{descripcion_1}, the surface depends on $\psi_1(s),\psi_2(t), \mu(0,t)$ and $\lambda(0,t).$ 
\end{proof}
We now briefly describe the case $\Delta<0:$ a theorem similar to Theorem \ref{nueva_carta} holds, replacing (\ref{g normalized}) by $H(g',g')=\pm\sigma$ and (\ref{nueva_carta_2}) by
\begin{equation*} 
g'g^{-1}=\frac{1+\sigma}{2}(\cosh\psi J+i\sinh\psi K)+\frac{1-\sigma}{2}(\sinh\psi J+i\cosh\psi K)
\end{equation*}
or
\begin{equation*}
g'g^{-1}=\frac{1+\sigma}{2}(\sinh\psi J+i\cosh\psi K)+\frac{1-\sigma}{2}(\cosh\psi J+i\sinh\psi K).
\end{equation*} 
Further, formulas (\ref{marco_caso_1}) are replaced by 
\begin{equation}\label{marco_caso_2}
N_1=\pm\frac{e^{\theta_1}}{\sqrt{2}}\left( \frac{\rho}{\nu^2+\rho^2}\partial_s+\frac{\nu}{\nu^2+\rho^2}\partial_t\right)\hspace{.5cm}\mbox{and}\hspace{.5cm}
N_2=\frac{e^{-\theta_1}}{\sqrt{2}}\left(-\frac{\nu}{\nu^2+\rho^2}\partial_s+\frac{\rho}{\nu^2+\rho^2}\partial_t\right) 
\end{equation} 
where $\nu,\rho\in\R$ are such that 
\begin{equation*}
\begin{pmatrix}
\frac{\nu}{\nu^2+\rho^2} \\
\frac{\rho}{\nu^2+\rho^2}
\end{pmatrix}=\frac{1}{2}\begin{pmatrix}
-e^{-\theta_2} & e^{-\theta_2} \\
-e^{\theta_2} & -e^{\theta_2}
\end{pmatrix}\begin{pmatrix}
H_0 \\ H_1
\end{pmatrix}. 
\end{equation*}
Following the line of the proof of Proposition \ref{prop metric}, we get that the metric reads 
\begin{equation*}
\pm 4 \ ( \ \nu\rho(-ds^2+dt^2)+(\rho^2-\nu^2)dsdt \ )
\end{equation*} 
and that $\nu,\rho$ are solutions of the system 
\begin{equation*}
\begin{cases}
\partial_s(\rho^2-\nu^2)+2\partial_t(\nu\rho)= -2(\nu^2+\rho^2)\partial_s\theta_2\\
2\partial_s(\nu\rho)-\partial_t(\rho^2-\nu^2)=-2(\nu^2+\rho^2)\partial_t\theta_2.
\end{cases}
\end{equation*}
Setting $z=s+it,$ $f=\rho-i\nu$ and $F=f^2,$ this system reads $\frac{\partial}{\partial \overline{z}}F=2b|F|$ with $b=-\partial_s\theta_2+i\partial_t\theta_2,$ and thus simplifies to
\begin{equation}\label{eqn general analytic f}
\frac{\partial}{\partial \overline{z}}f=b\overline{f}.
\end{equation}
Solutions of (\ref{eqn general analytic f}) are special cases of \emph{generalised analytic functions} (also called \emph{pseudoanalytic functions}) and are known to be in 1-1 correspondence with analytic functions; see e.g. \cite{Bers}, Section 9. As in Theorem \ref{descripcion_1} and Corollary \ref{4 functions} above, we get the following
\begin{coro}\label{2 functions 1 holomorphic}
A flat Lorentzian surface with flat normal bundle, regular Gauss map and such that $\Delta<0$ locally depends on one analytic function and on two real functions of one real variable.
\end{coro}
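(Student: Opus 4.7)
The plan is to mimic the proof of Corollary \ref{4 functions}, replacing the hyperbolic system (\ref{sistema_caso_1}) for $(\lambda,\mu)$ by the elliptic-type pseudoanalytic equation (\ref{eqn general analytic f}) for $f=\rho-i\nu$. First I would assemble the analog of Theorem \ref{descripcion_1} for $\Delta<0$: given a conformal map $\psi:\U\subset\A\to\A$ and real-valued functions $\nu,\rho$ on $\U$ satisfying the system stated just before (\ref{eqn general analytic f}), together with the frame (\ref{marco_caso_2}), one builds $g:\U\to Spin(2,2)$ solving the version of (\ref{gp theorem}) appropriate to $H(g',g')=\pm\sigma$, and then defines the immersion $F=\int\xi$ with $\xi=\laa X\cdot\varphi,\varphi\raa$ exactly as in the $\Delta>0$ case. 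Closedness of $\xi$, the identification of the normal bundle, and the verification that $\Delta<0$ (from $H(g',g')=\pm\sigma$ via the criterion in the proof of Theorem \ref{nueva_carta}) all go through verbatim; conversely, any flat Lorentzian surface with flat normal bundle, regular Gauss map, and $\Delta<0$ arises in this form, because the constant spinor field $\sigma\e$ of $\R^{2,2}$ restricted to $M$ has coordinates $g$ which are horizontal and conformal, and in an adapted chart produce precisely this data.

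Next I would count parameters. Writing $\psi=\tfrac{1+\sigma}{2}\psi_1+\tfrac{1-\sigma}{2}\psi_2$ in the coordinates (\ref{def s t}), conformality of $\psi$ forces $\psi_1=\psi_1(s)$ and $\psi_2=\psi_2(t)$, so $\psi$ contributes exactly two real functions of one real variable. With $\psi$ fixed, the coefficient $b=-\partial_s\theta_2+i\partial_t\theta_2$ is determined, and the remaining freedom in the immersion is carried by $f=\rho-i\nu$, a solution of
\[\frac{\partial f}{\partial\overline{z}}=b\,\overline{f}.\]

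The crucial step is then the invocation of Bers's theory of generalised (pseudo-)analytic functions (\cite{Bers}, Section 9): for a smooth coefficient $b$, the solutions of $\partial_{\overline{z}}f=b\overline{f}$ on a simply-connected domain stand in a canonical $\R$-linear bijection with the holomorphic functions on the same domain, obtained by multiplication by a suitable similarity factor. Thus $f$ is equivalent to one holomorphic function, and combined with $\psi_1,\psi_2$ this gives the local parametrisation by one holomorphic function and two real functions of one real variable that the corollary asserts.

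The main obstacle I expect is not the Bers correspondence itself but ensuring that it transports cleanly to a bijection on germs of immersions. Specifically, one must check that the ambiguity from the group $\mathbb{G}$ of Theorem \ref{nueva_carta}, the rigid motion ambiguity of Corollary \ref{two steps integration}, and the open nondegeneracy condition $|f|^2=\nu^2+\rho^2\ne0$ (which is what makes the metric (\ref{marco_caso_2}) nondegenerate and the Gauss map regular) all fit together so that the count is exact rather than merely an upper bound. Granting this bookkeeping, the argument reduces to the two items above: realise the immersion data as $(\psi_1,\psi_2,f)$, and identify $f$ with a holomorphic function via Bers.
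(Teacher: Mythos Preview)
Your proposal is correct and follows essentially the same approach as the paper: the paper's ``proof'' of this corollary consists entirely of the sentence ``As in Theorem \ref{descripcion_1} and Corollary \ref{4 functions} above, we get the following,'' relying on the preceding discussion where the system for $(\nu,\rho)$ is rewritten as $\partial_{\overline z}f=b\overline f$ and the Bers correspondence \cite{Bers} is invoked. Your write-up is in fact more detailed than what the paper provides, and the bookkeeping concerns you raise (the $\mathbb{G}$-ambiguity, rigid motions, and the nondegeneracy $|f|^2\neq 0$) are exactly the points one would need to check to make the count precise; the paper does not address them explicitly either.
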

\begin{remar}\label{rmk missing cases}
If $\Delta=0,$ then all the four natural invariants $K,K_N,|\vec{H}|^2,\Delta$ are zero. Moreover, if the surface does not belong to any degenerate hyperplane of $\R^{2,2},$ it is umbilic or quasi-umbilic and it has a parametrization of the form $\psi(s,t)=\gamma(s)+tT(s),$ where $\gamma$ is a lightlike curve in $\R^{2,2}$ and $T$ is some lightlike vector field along $\gamma$ such that  $\gamma'(s)$ and $T(s)$ are independent for all values of $s.$ We refer to \cite{BPS} for the proof and more details.
\end{remar}

\appendix
\section{Appendix}
\subsection{The norm $H$ on bivectors}\label{appendix H}
We keep the notation of Section \ref{prelim subsection1}.
\begin{prop}
For all $\xi\in \Im m\ \mathbb{H}_0\simeq\Lambda^2\R^{2,2},$
$$H(\xi,\xi)=\langle\xi,\xi\rangle-\sigma\ \xi\wedge\xi.$$
In this formula, $\langle.,.\rangle$ stands for the natural scalar product on $\Lambda^2\R^{2,2},$ and we use the identification $\Lambda^4\R^{2,2}\simeq\R$ given by the canonical volume element $e_0\wedge e_1\wedge e_2\wedge e_3$ to see the term $\xi\wedge\xi$ as a real number.
\end{prop}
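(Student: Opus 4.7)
Both sides of the claimed identity are $\A$-valued symmetric quadratic forms on the real $6$-dimensional space $\Im m\ \mathbb{H}_0\simeq\Lambda^2\R^{2,2}$, so a direct verification in coordinates is the most transparent route. I would fix the $\A$-basis $(iI,J,iK)$ of $\Im m\ \mathbb{H}_0$, write $\xi=a(iI)+bJ+c(iK)$ with $a=a_0+\sigma a_1$, $b=b_0+\sigma b_1$, $c=c_0+\sigma c_1$, and evaluate each side as a polynomial in the six real scalars $a_i,b_i,c_i$.

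For the left-hand side, formula (\ref{aplic_H_restricta}) applied to $\xi$ — whose decomposition in $\mathbb{H}_0$ has $p_0=0,\,p_1=a,\,p_2=b,\,p_3=c$ — gives $H(\xi,\xi)=-a^2+b^2-c^2$. Expanding the squares of Lorentz numbers via $(u+\sigma v)^2=u^2+v^2+2\sigma uv$ separates this into the real part $-a_0^2-a_1^2+b_0^2+b_1^2-c_0^2-c_1^2$ and the $\sigma$-part $-2(a_0a_1-b_0b_1+c_0c_1)$.

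For the right-hand side I would first translate $\xi$ into a bivector in $\Lambda^2\R^{2,2}$. The generators $iI,\,J,\,iK$ correspond to $e_2\wedge e_3,\,e_3\wedge e_1,\,e_1\wedge e_2$, as recalled at the beginning of Section \ref{section grassmannian}. Multiplication by $\sigma$ corresponds to Clifford multiplication by the volume element $e_0\cdot e_1\cdot e_2\cdot e_3$, and a short rearrangement in $Cl(2,2)$ sends the three generators respectively to $e_0\wedge e_1,\,-e_0\wedge e_2,\,e_0\wedge e_3$ (this is the Hodge star on $2$-forms in signature $(2,2)$, for which $*^2=\mathrm{id}$). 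Reading off the bivector components $\xi_{01}=a_1,\,\xi_{02}=-b_1,\,\xi_{03}=c_1,\,\xi_{12}=c_0,\,\xi_{13}=-b_0,\,\xi_{23}=a_0$ and using $\langle e_i\wedge e_j,\,e_i\wedge e_j\rangle=\langle e_i,e_i\rangle\langle e_j,e_j\rangle$ together with the Pfaffian formula $\xi\wedge\xi=2(\xi_{01}\xi_{23}-\xi_{02}\xi_{13}+\xi_{03}\xi_{12})$ then yields $\langle\xi,\xi\rangle=-a_0^2-a_1^2+b_0^2+b_1^2-c_0^2-c_1^2$ and $\xi\wedge\xi=2(a_0a_1-b_0b_1+c_0c_1)$, so $\langle\xi,\xi\rangle-\sigma\,\xi\wedge\xi$ matches $H(\xi,\xi)$ term by term.

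The only genuine difficulty is sign bookkeeping: correctly pinning down the Hodge-type images of $\sigma iI,\,\sigma J,\,\sigma iK$ in $\Lambda^2\R^{2,2}$, and keeping the split-signature signs $\epsilon_j=\pm 1$ consistent throughout the computation of $\langle\cdot,\cdot\rangle$. Once these are fixed, the identity reduces to the coefficient-by-coefficient comparison just outlined, and no further structural input is needed.
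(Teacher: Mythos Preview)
Your proposal is correct and follows essentially the same route as the paper: both proofs expand $H(\xi,\xi)=-a^2+b^2-c^2$ in real and $\sigma$-parts, identify the six real generators $iI,\sigma iI,J,\sigma J,iK,\sigma iK$ with the corresponding simple bivectors via the Clifford map, and then read off $\langle\xi,\xi\rangle$ and $\xi\wedge\xi$ directly. Your Hodge-star remark is a useful conceptual gloss, but the underlying computation and the resulting coefficients $\xi_{ij}$ coincide with those in the paper.
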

\begin{proof}
This is merely a computation: if $\xi=i\xi_1 I+\xi_2J+i\xi_3K$ belongs to $\Im m\ \mathbb{H}_0,$ writing $\xi_j=u_j+\sigma v_j,$ $u_j,v_j\in\R,$ for $j=1,2,3,$ we get
\begin{eqnarray}
H(\xi,\xi)&=&-\xi_1^2+\xi_2^2-\xi_3^2\nonumber\\
&=&-(u_1^2+v_1^2)+(u_2^2+v_2^2)-(u_3^2+v_3^2)-2\sigma\left(u_1v_1-u_2v_2+u_3v_3\right).\label{H eta xy}
\end{eqnarray}
Using the Clifford map (\ref{aplicliff}), the quaternions $iI,$ $\sigma iI,$ $J,$ $\sigma J,$ $iK,$ $\sigma iK$ represent the bivectors $e_2\wedge e_3,$ $e_0\wedge e_1,$ $e_3\wedge e_1,$ $e_2\wedge e_0,$ $e_1\wedge e_2$ and $e_0\wedge e_3$ respectively, and 
$$\xi\simeq u_1\ e_2\wedge e_3+v_1\ e_0\wedge e_1+u_2\ e_3\wedge e_1+v_2\ e_2\wedge e_0+u_3\ e_1\wedge e_2+v_3\ e_0\wedge e_3.$$
Here $(e_0,e_1,e_2,e_3)$ is the canonical basis of $\R^{2,2}.$ It is then straightforward to verify that the term (\ref{H eta xy}) is $\langle\xi,\xi\rangle-\sigma\ \xi\wedge\xi.$  
\end{proof}
\subsection{Vanishing of the area form on the Grassmannian}
We keep here the notation of Section \ref{section grassmannian}.
\begin{lem}\label{lin_indep}
If $\xi,\xi'\in T_p\Q\ \subset\Im m\ \mathbb{H}_0$ are such that $\omega_{\Q}{}_p(\xi,\xi')=0$ then 
\begin{equation*}
\xi'=\lambda\xi,\hspace{0.2in}\xi=\mu\xi'\hspace{0.2in}\text{or} \hspace{0.2in}\xi+\xi'=\pm\sigma(\xi-\xi')
\end{equation*}
for some $\lambda,\mu\in\A.$ In particular the real vector space generated by $\xi$ and $\xi'$ belongs to a $\A$-line in $T_p\Q.$ 
\end{lem}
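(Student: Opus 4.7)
The plan is to exploit the idempotent decomposition of $\A$. Let $e_\pm := \tfrac{1\pm\sigma}{2}$, so that $e_\pm^2 = e_\pm$, $e_+e_-=0$ and $\sigma e_\pm = \pm e_\pm$. Set $V := i\R I \oplus \R J \oplus i\R K \subset \Im m\ \mathbb{H}_0$, a $3$-dimensional real subspace; every $\xi \in \Im m\ \mathbb{H}_0$ decomposes uniquely as $\xi = e_+\xi^+ + e_-\xi^-$ with $\xi^\pm \in V$. A direct computation from $\xi\times\xi' = \tfrac{1}{2}(\xi\xi'-\xi'\xi)$ shows that the cross product of two elements of $V$ again lies in $V$, giving $\times_V$ on $V$; moreover $H_{|V}=:H_0$ is a real symmetric form of signature $(-,+,-)$. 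Using the $\A$-multilinearity of $H$ and $[\cdot,\cdot,\cdot]$, these restrictions fit together into diagonal formulas
\[ H(\xi,\xi') = e_+H_0(\xi^+,\xi'^+) + e_-H_0(\xi^-,\xi'^-),\qquad [\xi,\xi',\xi''] = e_+[\xi^+,\xi'^+,\xi''^+]_V + e_-[\xi^-,\xi'^-,\xi''^-]_V. \]

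Translating the hypotheses: $H(p,p)=-1$ yields $H_0(p^\pm,p^\pm)=-1$, so each $p^\pm$ is non-null and in particular nonzero; $\xi,\xi'\in T_p\Q$ yields $\xi^\pm,\xi'^\pm \in (p^\pm)^\perp$ in $V$; and the vanishing of $\omega_{\Q}(p)(\xi,\xi')=[\xi,\xi',p]$ splits into the real conditions $[\xi^\pm,\xi'^\pm,p^\pm]_V=0$.

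The core step is the following $3$-dimensional claim: if $u,v\in p^\perp\subset V$ with $p$ non-null and $[u,v,p]_V=0$, then $u$ and $v$ are $\R$-linearly dependent. Indeed $[u,v,p]_V = H_0(u\times_V v, p)$, and a direct calculation shows $u\times_V v$ is $H_0$-orthogonal to both $u$ and $v$. If $u,v$ were independent, then since $p\notin p^\perp$ (as $p$ is non-null), the triple $(u,v,p)$ would span $V$, and $u\times_V v$ would be $H_0$-orthogonal to all of $V$, hence $u\times_V v=0$ by non-degeneracy of $H_0$. But the explicit coordinate formula for $\times_V$ shows that $u\times_V v=0$ forces $u$ and $v$ to be proportional, a contradiction. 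Applying this on each sign, $\xi^\pm$ and $\xi'^\pm$ are $\R$-linearly dependent.

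It remains to read off the three conclusions. If for each sign one has $\xi'^\pm = \lambda^\pm\xi^\pm$ with $\lambda^\pm \in \R$ (which always works unless $\xi^\pm=0\neq\xi'^\pm$), setting $\lambda := \lambda^+e_+ + \lambda^-e_- \in \A$ gives $\xi' = \lambda\xi$. Symmetrically, whenever $\xi^\pm = \mu^\pm\xi'^\pm$ holds on both signs, we get $\xi = \mu\xi'$. The only configuration escaping both is that the obstructions occur with opposite signs, i.e.\ up to interchange $\xi^+=0=\xi'^-$ with $\xi^-,\xi'^+$ nonzero; then $\xi=e_-\xi^-$, $\xi'=e_+\xi'^+$, and using $\sigma e_\pm=\pm e_\pm$ one computes $\sigma(\xi-\xi') = -e_-\xi^- - e_+\xi'^+ = -(\xi+\xi')$, giving $\xi+\xi'=-\sigma(\xi-\xi')$; the mirror subcase yields the $+$ sign. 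The final clause follows by observing that $\R\xi+\R\xi'$ is contained in $\A\xi$, $\A\xi'$, or $\A\cdot(e_+\xi'^++e_-\xi^-)$ respectively, each of which is an $\A$-line in $T_p\Q$. The main obstacle is the bookkeeping in this case analysis; the genuine content lies in the $3$-dimensional claim, where non-degeneracy of $H_0$ does the work.
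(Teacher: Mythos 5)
Your proof is correct and follows essentially the same route as the paper: idempotent decomposition $\xi=e_+\xi^++e_-\xi^-$, reduction to a statement about the real $3$-dimensional space $V$ with its cross product, and a case analysis on which components vanish. The only real difference is that you spell out the ``easy to see'' step — that $\omega_{\Q}{}_p(\xi,\xi')=0$ forces componentwise proportionality — via non-degeneracy of $H_0$ on $V$, where the paper simply asserts $\omega_{\Q}{}_p(\xi,\xi')=0\Leftrightarrow\xi\times\xi'=0$ and reads off proportionality from the usual $\R^3$ cross product.
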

\begin{proof}
First, it is easy to see that $\omega_{\Q}{}_p(\xi,\xi')=0$ if and only if $\xi\times\xi'=0.$ If we write 
\begin{equation*}
\xi=\frac{1+\sigma}{2}\xi_1+\frac{1-\sigma}{2}\xi_2\hspace{.5cm}\mbox{and}\hspace{.5cm}\xi'=\frac{1+\sigma}{2}\xi_1'+\frac{1-\sigma}{2}\xi_2',
\end{equation*}
where $\xi_1,\xi_2,\xi_1',\xi_2'$ belong to $i\R I\oplus \R J\oplus i\R K\simeq \R^3,$ then $\xi\times\xi'=0$ if and only if $\xi_1\times\xi_1'=\xi_2\times\xi_2'=0$ where the cross product is here the usual cross product in $\R^3.$ We then assume that $\xi$ and $\xi'$ are not zero (else, the result is trivial), and consider the following cases:
\\\emph{1-} If $\xi_1$ and $\xi_2$ are not zero, then $\xi'_1=\alpha\xi_1$ and $\xi'_2=\beta\xi_2$ for some $\alpha,\beta\in\R;$ setting  $\lambda=\frac{1+\sigma}{2}\alpha+\frac{1-\sigma}{2}\beta$ we have $\xi'=\lambda\xi.$ 
\\\emph{2-} If $\xi_1\neq 0$ and $\xi_2=0,$ then,
\\\emph{a-} assuming $\xi'_1\neq 0$ and $\xi'_2=0,$ we have $\xi'_1=\alpha\xi_1$ for some $\alpha\in\R,$ and thus $\xi'=\lambda\xi$ with $\lambda=\frac{1+\sigma}{2}\alpha;$
\\\emph{b-} assuming $\xi'_1=0$ and $\xi'_2\neq 0,$ we have $\xi+\xi'=\sigma(\xi-\xi')$ by a direct computation. 
\\The other cases are similar. Finally, if $\xi'=\lambda\xi$ or $\xi=\mu\xi',$ the real vector space generated by $\xi$ and $\xi'$ obviously belongs to a $\A-$line in $T_p\mathcal{Q};$ this result also holds if $\xi+\xi'=\pm\sigma(\xi-\xi')$ since this space is also generated by $\xi+\xi'$ and $\xi-\xi'.$
\end{proof}
\noindent\textbf{Acknowledgements:} This work is part of the second author's PhD thesis; he thanks CONACYT for support. The authors thank the anonymous referee for many valuable comments which helped to improve considerably the writing of the paper.

\end{document}